\newcommand{\R}{\mathbb R}
\newcommand{\N}{\mathbb N}
\newtheorem{theorem}{Theorem}[section]
\newtheorem{corollary}{Corollary}[section]
\newtheorem{proposition}{Proposition}[section]
\newtheorem{definition}{Definition}[section]
\newtheorem{remark}{Remark}[section]
\newtheorem{lemma}{Lemma}[section]
\newcommand{\orho}{\overline{\rho}}
\newcommand{\oxi}{\overline{\xi}}
\newcommand{\ds}{\displaystyle}
\renewcommand{\epsilon}{\varepsilon}
\newcommand{\eps}{\epsilon}
\renewcommand{\phi}{\varphi}
\newenvironment{proofof}[1]{\smallskip\noindent\emph{Proof of #1.}%
\hspace{1pt}}{\hspace{-5pt}{\nobreak\quad\nobreak\hfill\nobreak%
$\square$\vspace{8pt}\par}\smallskip\goodbreak}
\newlength{\captionwidth}
\long\def\@makecaption#1#2{%
   \vskip 10\p@
   \setbox\@tempboxa\hbox{#1: #2}%
   \ifdim \wd\@tempboxa > \captionwidth 
       \hbox to\hsize{\hfil
       \parbox[t]{\captionwidth}{
       \small#1: \small#2\par}
       \hfil}
     \else
       \hbox to\hsize{\hfil\box\@tempboxa\hfil}%
   \fi}
\begin{document}

\title{Sharp profiles in models of collective movements}

\author{Andrea Corli\\
\small\textit{Department of Mathematics and Computer Science, University of Ferrara}\\
\small\textit{I-44121 Italy, e-mail: andrea.corli@unife.it}, \\\\
Lorenzo di Ruvo\\
\small\textit{Department of Sciences and Methods for Engineering, University of Modena and Reggio Emilia}\\
\small\textit{I-42122 Italy, e-mail: lorenzo.diruvo@unimore.it},
\and Luisa Malaguti\\
\small\textit{Department of Sciences and Methods for Engineering, University of Modena and Reggio Emilia}\\
\small\textit{I-42122 Italy, e-mail: luisa.malaguti@unimore.it}}

\maketitle

\par\vspace*{-.03\textheight}{\center
{\emph{Dedicated to Alberto Bressan on the occasion of his 60th birthday}}\par}

\begin{abstract}
We consider a parabolic partial differential equation that can be understood as a simple model for crowds flows. Our main assumption is that the diffusivity and the source/sink term vanish at the same point; the nonhomogeneous term is different from zero at any other point and so the equation is not monostable. We investigate the existence, regularity and monotone properties of semi-wavefront solutions as well as their convergence to wavefront solutions. 

\vspace{1cm}
\noindent \textbf{AMS Subject Classification:} 35K65; 35C07, 35K55, 35K57

\smallskip
\noindent
\textbf{Keywords:} Degenerate parabolic equations, semi-wavefront solutions, collective movements, crowd dynamics.
\end{abstract}

\maketitle
\section{Introduction}\label{s:I}
In this paper we consider the scalar advection-reaction-diffusion equation
\begin{equation}\label{e:E}
\rho_t + f(\rho)_x=\left(D(\rho)\rho_x\right)_x+g(\rho), \qquad t\ge 0, \, x\in \R,
\end{equation}
for the unknown function $\rho=\rho(x,t)$. We assume that $f\in C^1[0,\overline \rho]$, $f(0)=0$, $g\in C[0,\overline \rho]$, $D\in C^1[0,\overline \rho]$ and denote for short $h(\rho):=f'(\rho)$; here, $\orho$ is a positive constant. All along the paper we consider several different conditions on both $D$ and $g$ but we mainly focus on the case that $D$ satisfies 
\begin{itemize}

\item[{(D)}] \, $D(\orho)=0$ and $D(\rho)>0$ for $\rho\in(0, \overline \rho)$.

\end{itemize}
About the forcing term $g$ we mainly deal with the following assumption:
\begin{itemize}
\item[{(g)}] \, $g(\rho)>0$ for $\rho\in[0, \overline \rho)$ and $g(\overline \rho)=0$.
\end{itemize}
We refer to Figure \ref{fig:D} for a pictorial summary showing the typical behavior of the diffusivity $D$ and of the source/sink term $g$ according to the assumptions we require here and below.


\begin{figure}[htbp]
\begin{picture}(100,120)(-80,-10)
\setlength{\unitlength}{1pt}

\put(160,0)
{
\put(-220,0){
\put(0,0){\vector(1,0){180}}
\put(180,-5){\makebox(0,0)[t]{$\rho$}}
\put(0,0){\vector(0,1){100}}
\put(-5,100){\makebox(0,0)[r]{$D$}}
\put(150,-5){\makebox(0,0)[t]{$\overline{\rho}$}}

\put(0,0){\thicklines{\qbezier(0,0)(75,80)(150,0)}} 
\put(18,10){\makebox(0,0)[l]{(${\rm D}$)}}

\put(0,0){\thicklines{\qbezier(0,60)(105,58)(150,0)}} 
\put(18,55){\makebox(0,0)[t]{(D)}}

\put(0,0){\thicklines{\qbezier(0,72.5)(150,58)(150,0)}} 
\put(152,15){\makebox(0,0)[l]{($\tilde{\rm D}$)}}

\put(0,0){\thicklines{\qbezier(0,85)(105,80)(150,30)}} 
\put(18,88){\makebox(0,0)[b]{$(\hat{\rm D})$}}

}

\put(0,30){
\put(0,0){\vector(1,0){180}}
\put(180,-5){\makebox(0,0)[t]{$\rho$}}
\put(0,0){\vector(0,1){70}}
\put(-5,70){\makebox(0,0)[r]{$g$}}
\put(150,6){\makebox(0,0)[b]{$\overline{\rho}$}}

\put(0,0){\thicklines{\qbezier(0,60)(105,58)(150,0)}} 
\put(18,65){\makebox(0,0)[b]{(g)}}

\put(0,0){\thicklines{\qbezier(0,0)(85,58)(150,0)}} 
\put(12,26){\makebox(0,0)[t]{(${\rm g}_0$)}}

\put(0,0){\thicklines{\qbezier(0,0)(105,-38)(150,-40)}} 
\put(30,-30){\makebox(0,0)[b]{$(\hat{\rm g})$}}

\put(0,0){\thicklines{\qbezier(0,40)(40,35)(75,0)}} 
\put(0,0){\thicklines{\qbezier(75,0)(100,-20)(150,-20)}} 
\put(140,-16){\makebox(0,0)[b]{(g${}_1$)}}

}
}\end{picture}
\caption{\label{fig:D}{Some examples of diffusivities $D$ and source/sink terms $g$ used in the paper.}}
\end{figure}
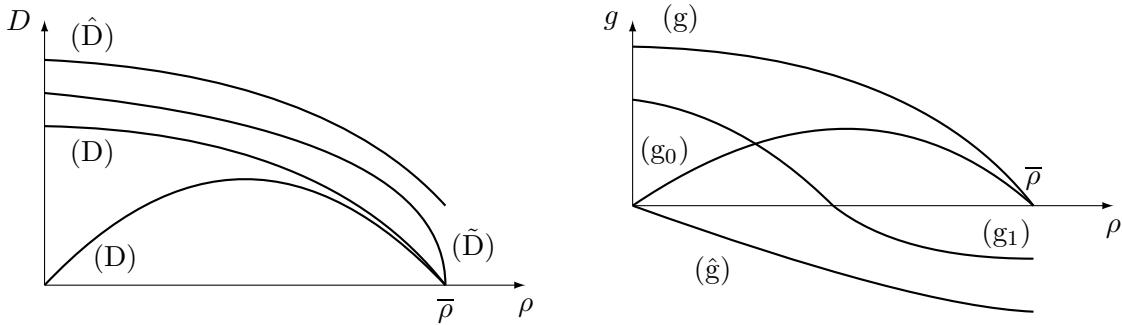

Equation \eqref{e:E} was recently proposed in \cite{BTTV, Corli-Malaguti} as a simplified model for collective movements in one spatial dimension. In that case the function $\rho(x,t)$ represents the density of pedestrians at point $x$ and time $t$, while $\orho$ is the maximal density that the environment can support. Pedestrians are advected by the function $f$; a common choice in this framework is $f(\rho)=\rho v(\rho)$, where $v$ is an assigned density-dependent velocity. The term $D$ accounts for diffusion effects and takes into account the visual depth; under the above choice of $f$, in \cite{BTTV} the authors deduce $D(\rho)=-\delta \rho v'(\rho)$ by a Chapman-Enskog expansion, where $\delta$ is the visual depth. At last, a source term $g$ satisfying (g) represents entries, which are thought to be diffused rather than localized at some place \cite{Bagnerini-Colombo-Corli}. This modeling can be meaningful, for instance, for a crowd flowing along a corridor with many side entries; think at the barrier of a subway exit or at the platforms of a railway station reaching the main hall: instead of modeling each single entry we use a continuum description. If $g$ is a decreasing function of $\rho$, then entries are high at low densities on the line, low at high densities and are blocked when the maximal density $\orho$ is reached. We refer to \cite{Corli-Malaguti} for a more detailed discussion of this model as well as to other physical and biological phenomena that equation \eqref{e:E} models.

We are interested in the existence and regularity of constant-profile solutions $\rho$ of \eqref{e:E}, i.e. solutions of the form $\rho(x,t)=\varphi(x-ct)$, where $\varphi(\xi)$ is the wave profile and $c$ the constant wave speed. In this case $\varphi(\xi)$ satisfies the equation
\begin{equation}\label{e:tws}
\left(D(\varphi)\varphi^{\, \prime}\right)^{\, \prime} + \left(c-h(\varphi)\right)\varphi^{\, \prime}+g(\varphi)=0
\end{equation}
in some open interval $I\subseteq \R$, where ${}^{\, \prime}$ denotes differentiation with respect to $\xi$. We refer to \cite{Bonheure-Sanchez, GK} for a wide treatment of this topic.

In \cite{Corli-Malaguti} we discussed in detail this topic in the case the diffusivity $D$ satisfies both $D(0)=0$ and $D(\orho)>0$; these conditions were deduced by experimental data in \cite{BTTV} but we treated as well the case $D(0)>0$. We proved that for every wave speed $c$ there exist {\em classical} semi-wavefront solutions, i.e. constant-profile solutions such that their profile $\phi$ is defined in a half-line and $D(\varphi)\varphi^{\, \prime}$ is absolutely continuous. We also showed that the slope of the profiles $\phi$ at points where $\rho=0$ depends on the order of vanishing of $D$ at $0$. At last, we proved that pasting semi-wavefront solutions {\em never} yields {\em global} traveling-waves.

In the current paper, aiming at the widest generality, we make no assumption about the vanishing of $D$ at $0$; however, we mostly assume $D(\orho)=0$. Some motivations to this latter assumption can be found in \cite{Bonzani2000, Bonzani-Mussone2002, De_Angelis1999}; a naive explanation, in terms of the above model, is the following. Assume $f(\rho) = \rho v(\rho)$ as above; then, it is natural to require $v(\orho)=0$. If $D(\orho)>0$, then the effect of diffusion is to let vehicles or pedestrians move {\em backwards} at points where the maximal density is reached, because $v(\orho)=0$; this is in contradiction with the phenomenon we are modeling.

The degenerate behavior induced by assumption (D) does not affect most of the existence results given in \cite{Corli-Malaguti}; however, here we show that it causes a lack of their regularity and leads to {\em sharp} semi-wavefront solutions \cite{SanchezGarduno-Maini}. This is in contrast to \cite{Corli-Malaguti}, where only {\em classical} solutions appeared, and is due to the vanishing of {\em both} $D$ {\em and} $g$ at $\orho$.  We refer to Definition \ref{d:tws} below for both definitions. Now, we provide a detailed account on these sharp solutions.

A sharp semi-wavefront solution is constant on a half-plane and reaches this value in a continuous but non-differentiable way. The interest in sharp solutions is related to the important property of finite speed of propagation, as showed in \cite{Gilding-Kersner_1996}. The appearance of sharp profiles was first discussed in \cite{Aronson} within the framework of models of biological invasion. The equation studied there is
\begin{equation}\label{e:Aronson}
u_t=(u^m)_{xx}+u(1-u),
\end{equation}
with $m>1$;  it is showed that equation \eqref{e:Aronson} supports wavefronts for a {\em half-line} of admissible speeds and the solution with minimal speed $c^*(m)$ has a sharp behavior. The explicit computation of this sharp solution is provided in \cite{Murray} when $m=2$; in that case $c^*(2)=1/{\sqrt 2}$. The case when the source term $u(1-u)$ in \eqref{e:Aronson} is replaced by $u(1-u)(u-\alpha)$, with $\alpha \in (0, \frac 12)$, is treated in \cite{Hosono} and again a sharp wavefront arises. Equation \eqref{e:Aronson} led to study the general model
\begin{equation}\label{e:Maini}
u_t=\left(D(u)u_x\right)_{x}+g(u),
\end{equation}
with $D(0)=0$, in the monostable case
\begin{equation}\label{e:Fisher}
g(u)>0 \text{ in } (0,1) \text{ and } g(0)=g(1)=0.
\end{equation}
The uniqueness of a sharp solution  (in the class of classical or sharp solutions) is proved in \cite{SanchezGarduno-Maini}. A rather general discussion on sharp wavefronts appeared in \cite{MMconv} for the equation
\begin{equation}\label{e:conv}
u_t+h(u)u_x = \left(D(u)u_x\right)_{x}+g(u),
\end{equation}
which incorporates the convective term $h(u)$; the source term $g$  satisfies again \eqref{e:Fisher} and $D$ has essentially a polynomial growth near $0$ and $1$ if it also vanishes there. A doubly sharp behavior can be observed in the latter case. In recent years, sharp profiles have been considered in equations with delay arguments in the source term \cite{Jin} and in coupled equations  see \cite{Ducrot, Satnoianu}. We also quote \cite{Duijin} for a model without source term but with the presence of an extra term depending on $u_{xt}$.

To the best of our knowledge, the study of sharp profiles done in this paper is new and our results can be derived by none of the papers quoted above. The reason is twofold. On the one hand, here we have no restrictions about the growth of $D$ near $\orho$. On the other hand, we deal with semi-wavefronts corresponding to {\em every} wave speed; on the contrary, only a {\em half-line} of wavefront speeds is admissible for equation \eqref{e:conv} under condition \eqref{e:Fisher}. We notice that, as in the monostable case, a critical threshold $c^*$ appears when (g) holds \cite{Corli-Malaguti}, and satisfies completely analogous estimates. However, while in the former case $c^*$ separates the existence or the failure of wavefronts, in the latter it only gives information about the slope of the profiles when they reach the value $0$.

The plan of the paper now follows. In Section \ref{s:main} we give precise definitions and state our results. We establish the existence of semi-wavefront solutions for {\em every} wave speed $c$, we characterize the occurrence of classical or sharp profiles and show some monotonicity properties. Indeed, we also deal with the case when $D \in C[0, \orho]\cap C^1[0, \orho)$ satisfies
\begin{itemize}

\item[{(${\rm \tilde D}$)}] \, $D(\orho)=0$, $D(\rho)>0$ for $\rho\in(0,\orho)$, $\dot D(\orho) = -\infty$; moreover, there exists $\lim_{\phi\to\orho^-}\frac{D(\phi)g(\phi)}{\phi-\orho}\in(-\infty,0]
$.

\end{itemize}
Then, we briefly consider the companion case when the source term $g$ models exists \cite{Bagnerini-Colombo-Corli} instead of entries. This case (even under the assumption $D(\orho)>0$) was not treated in \cite{Corli-Malaguti}; in order to cover also that case, we make no requirements about the vanishing of $D$ at $\orho$ and simply assume
\begin{itemize}
\item[{($\hat {\rm D}$)}] \, $D(\rho)>0$ for $\rho\in(0, \orho)$;
\item[{$(\hat{\rm g})$}] \, $g(\rho)<0$ for $\rho\in(0, \overline \rho]$ and $g(0)=0$.
\end{itemize}
We also refer to \cite{Kiselev-Ryzhik_II, Kiselev-Ryzhik_I} for a source term satisfying condition {$(\hat{\rm g})$} in a different framework.  

Next, we deal with the convergence of semi-wavefronts to wavefronts under the assumption that the diffusivity still satisfies (${\rm \hat D}$) or (${\rm \tilde D}$). We consider a decreasing sequence of source terms $g_n$ that satisfy (g) and converge uniformly to a source term $g_0\in C[0,\orho]$ satisfying the monostability condition, see \eqref{e:Fisher},
\begin{itemize}
\item[{(${\rm g}_0$)}] \, $g(\rho)>0$ for $\rho\in(0, \overline \rho), \, \, g(0)=g(\overline \rho)=0$ and $\displaystyle{\limsup_{\rho \to 0^+}} \frac{D(\rho)g(\rho)}{\rho}<+\infty$.
\end{itemize}
Notice that, when $D(0)=0$, the last condition in (${\rm g}_0$) is automatically satisfied.  As we pointed out above, the equation associated with $g_0$ admits wavefronts and the issue is whether and how the semi-wavefront profiles $\phi_n$ associated to the equation with $g_n$ converge to the profile $\phi_0$ associated to the equation with $g_0$. We thank C. Mascia for having risen such a question. Notice that assumptions (${\rm \tilde D}$) and (${\rm g}_0$) mix together $D$ and $g$; indeed, this mixing is well known when dealing with diffusivities and source terms that may vanish at the same point. 

Our last result concerns the case when the term $g$ changes sign; namely, we assume
\begin{itemize}
\item[{(${\rm g}_1$)}] \, $g(\rho)>0$ for $\rho\in[0, \rho_0)$ and $g(\rho)<0$ for   $\rho\in(\rho_0, \overline \rho]$,
\end{itemize}
with $\rho_0\in(0,\orho)$. Such a term may be thought to model entries if $\rho\in[0,\rho_0)$ and exits if $\rho\in(\rho_0,\orho]$. Under a further local assumption at $\rho_0$ we prove the existence of several patterns of traveling waves.

The results provided in Section \ref{s:main} do not cover all possible cases: we tried to deal with the most significant situations while avoiding exceedingly complicated statements and proofs. However, it is not difficult to extend our results by a suitable mixing of the techniques exploited in \cite{Corli-Malaguti} and in the current paper.

Section \ref{s:first solution} is concerned with a technical tool that was intensively used in \cite{Corli-Malaguti}; namely, the reduction of the second-order equation \eqref{e:tws} to the {\em singular} first-order equation
\begin{equation}\label{e:zeq}
\dot z(\varphi)=h(\varphi)-c-\frac{D(\varphi)g(\varphi)}{z(\varphi)}, \qquad \varphi \in (0, \overline \rho).
\end{equation}
Such an order reduction depends on the strict monotonicity of the wave profile  $\varphi$ in the interval where $0 \le \phi(\xi) < \orho$. In that case, if we denote by $\xi=\xi(\phi)$ the inverse function of $\phi$, then the function $z$ is defined by $z(\varphi):=D(\varphi)\varphi^{\, \prime}\left(\xi(\varphi)\right)$ for $\varphi\in (0, \overline \rho)$. We point out that similar techniques were recently exploited in \cite{Garrione-Strani} in the case $D$ is a saturating diffusion depending on $\rho_x$ instead of $\rho$.

In Section \ref{s:nature} we first prove a property of semi-wavefront profiles and deduce that our definition of sharp profiles is essentially equivalent to a previous one given in \cite{MMconv}; that section and Sections \ref{s:nature-1} contain the proofs of our main results. As in \cite[\S 8]{Corli-Malaguti}, the procedure of pasting semi-wavefront solutions to obtain a global traveling wave is unsuccessful.  Sections \ref{S:convergence} contain the proof of the convergence of semi-wavefronts to wavefronts while in Section \ref{s:gposneg} we prove the result corresponding to (${\rm g}_1$). 


\section{Main results}\label{s:main}
\setcounter{equation}{0}
In this section we first introduce traveling-wave and semi-wavefront solutions to \eqref{e:E}; assumptions (D) and (g) are not required in these definitions. We refer to \cite{Bonheure-Sanchez, Corli-Malaguti, GK, SanchezGarduno-Maini} for more information. Then, we state and comment our main results.

\begin{definition}\label{d:tws} Let $I\subseteq \R$ be an open interval; consider a function $\varphi \colon I \to [0, \overline{\rho}]$ such that $\varphi\in C(I)$ and $D(\varphi) \varphi^{\, \prime}\in L_{\rm loc}^1(I)$. For all $(x,t)$ with $x-ct \in I$, the function $\rho(x,t)=\varphi(x-ct)$ is said a {\em traveling-wave} solution of equation \eqref{e:E} with wave speed $c$ and wave profile $\phi$ if
\begin{equation}\label{e:def-tw}
\int_I \left\{\left(D\left(\phi(\xi)\right)\phi'(\xi) - f\left(\phi(\xi)\right) + c\phi(\xi) \right)\psi'(\xi) - g\left(\phi(\xi)\right)\psi(\xi)\right\}\,d\xi =0,
\end{equation}
for every $\psi\in C_0^\infty(I)$. A traveling-wave solution is
\begin{itemize}

\item[{\rm -)}] {\em global} if $I=\R$;

\item[{\rm -)}] {\em strict} if $I\ne \R$ and $\phi$ is not extendible to $\R$;

\item[{\rm -)}] {\em classical} if $\varphi$ is differentiable, $D(\varphi) \varphi'$ is absolutely continuous and \eqref{e:tws} holds a.e.

\item[{\rm -)}] {\em sharp at $\ell$} if $g(\ell)=0$ and there exists $\xi_0\in I$ such that $\phi(\xi_0)=\ell$, with $\phi$ classical in $I\setminus\{\xi_0\}$ and not differentiable at $\xi_0$.

\end{itemize}
A {\em wavefront solution} is a global traveling-wave solution such that the limits of $\phi$ at $\pm\infty$ are zeros of the function $g$.
\end{definition}

We point out that a profile $\phi$ to a traveling-wave solution must be differentiable {\em a.e. in $I$}; if $\phi$ is classical, then it is differentiable {\em everywhere in $I$}. In the latter case it can happen that $\phi$ extends continuously to $\bar I$ but it is not differentiable at the extreme points of $\bar I$. Of course, if  \eqref{e:tws} holds a.e. in $I$ then \eqref{e:def-tw} is satisfied. We remark that the study of {\em global} sharp traveling-wave solutions for \eqref{e:E} has been done in \cite{MMconv} under the further requirement \[
\lim_{\xi \to \xi_0}D\left(\phi(\xi)\right)\phi^{\prime}(\xi)=0.
\]
Indeed, in Proposition \ref{l:sharp} we prove that this property is a consequence of Definition \ref{d:tws}; then, the two definitions are equivalent for sharp global traveling-wave solutions. We point out that in this paper we shall only deal with classical or sharp traveling-wave solutions.

Now, we define semi-wavefront solutions.

\begin{definition}\label{d:swf}
Let $\rho$ be a traveling-wave solution of equation \eqref{e:E} whose wave profile $\phi$ is defined in $(\varpi, +\infty)$, $\varpi \in \R$; let $\ell^+\in[0,\overline{\rho}]$ be such that $g(\ell^+)=0$. Then, $\rho$ is said a \emph{semi-wavefront solution} of \eqref{e:E} {\em to} $\ell^+$ if $\phi$ is monotonic, non-constant and
$$
\varphi(\xi) \to \ell^+ \quad \text{as } \xi \to +\infty.
$$
Analogously, $\rho$ is said a \emph{semi-wavefront solution} of \eqref{e:E} \emph{from} $\ell^-$, for some $\ell^-\in[0,\overline{\rho}]$, if $g(\ell^-)=0$,  $\phi$ is defined $(-\infty, \varpi)$, is monotonic, non-constant and
$\varphi(\xi) \to \ell^-$ as $\xi \to -\infty$.
\end{definition}

Above, {\em monotonic} is meant in the weak sense: if $\xi<\xi_2$ then either $\phi(\xi_1)\le \phi(\xi_2)$ or $\phi(\xi_1)\ge \phi(\xi_2)$; analogously, {\em non-constant} stands for non-identically constant. For simplicity, in the following we use the terminology introduced for solutions to \eqref{e:E} also for profiles of such solutions. We refer to Figure \ref{f:SWFs} for a representation of some semi-wavefront profiles.

\begin{figure}[htbp]
\begin{picture}(100,100)(-80,-10)
\setlength{\unitlength}{1pt}

\put(0,0){
\put(-20,0){
\put(0,0){\vector(1,0){350}}
\put(0,0){\line(-1,0){40}}
\put(0,60){\line(1,0){350}}
\put(0,60){\line(-1,0){40}}
\put(350,8){\makebox(0,0){$\xi$}}
\put(130,-10){\vector(0,1){100}}
\put(137,87){\makebox(0,0){$\phi$}}
\put(137,67){\makebox(0,0){$\overline{\rho}$}}

\put(-40,0){
\put(0,0){\thicklines{\qbezier(80,60)(125,40)(150,0)}}
\put(80,60){\thicklines{\line(-1,0){80}}}
\put(101,42){\makebox(0,0)[br]{$\phi_2$}}
\put(30,62){\makebox(0,0)[b]{$\phi_2$}}
\put(150,-2){\makebox(0,0)[t]{$\varpi_2$}}
\put(121,34.5){\thicklines{\vector(3,-2){3}}}
\multiput(80,0)(0,5){12}{$.$}
\put(80,-2){\makebox(0,0)[tl]{$\oxi_2$}}
}

\put(40,0){
\put(0,0){\thicklines{\qbezier(80,60)(80,15)(150,0)}}
\put(80,60){\thicklines{\line(-1,0){80}}}
\put(120,17){\makebox(0,0)[br]{$\phi_3$}}
\put(60,62){\makebox(0,0)[b]{$\phi_3$}}
\put(150,-2){\makebox(0,0)[t]{$\varpi_3$}}
\put(100,20){\thicklines{\vector(3,-2){3}}}
\multiput(78,0)(0,5){12}{$.$}
\put(80,-2){\makebox(0,0)[tl]{$\oxi_3$}}
}

\put(-40,0){
\put(0,0){\thicklines{\qbezier(0,56)(70,56)(110,0)}}
\put(60,32){\makebox(0,0)[b]{$\phi_1$}}
\put(110,-2){\makebox(0,0)[t]{$\varpi_1$}}
\put(83,29){\thicklines{\vector(1,-1){3}}}
}

\put(90,0){
\put(0,0){\thicklines{\qbezier(160,0)(160,58)(260,58)}}
\put(220,43){\makebox(0,0)[b]{$\phi_5$}}
\put(164,-2){\makebox(0,0)[t]{$\varpi_5$}}
\put(191,46){\thicklines{\vector(2,1){3}}}
}

\put(50,0){
\put(0,0){\thicklines{\qbezier(160,0)(160,60)(220,60)}}
\put(192,45){\makebox(0,0)[bl]{$\phi_4$}}
\put(240,62){\makebox(0,0)[b]{$\phi_4$}}
\put(164,-2){\makebox(0,0)[t]{$\varpi_4$}}
\put(173,42){\thicklines{\vector(1,1){3}}}
\put(220,60){\thicklines{\line(1,0){80}}}

\multiput(220,0)(0,5){12}{$.$}
\put(220,-2){\makebox(0,0)[tl]{$\oxi_4$}}
}

}
}\end{picture}
\caption{\label{f:SWFs}{A strictly decreasing semi-wavefront profile $\phi_1$ from $\overline{\rho}$; a strictly increasing semi-wavefront profile $\phi_5$ to $\overline{\rho}$. Non-strictly decreasing, sharp (at $\orho$) semi-wavefront profiles $\phi_2$ and $\phi_3$ from $\overline{\rho}$; a non-strictly increasing, classical semi-wavefront profile $\phi_4$ to $\overline{\rho}$. While $\phi_4$ is smooth at $\oxi_4$, $\phi_2$ and $\phi_3$ are not smooth at $\oxi_2$ and $\oxi_3$, respectively.}}
\end{figure}
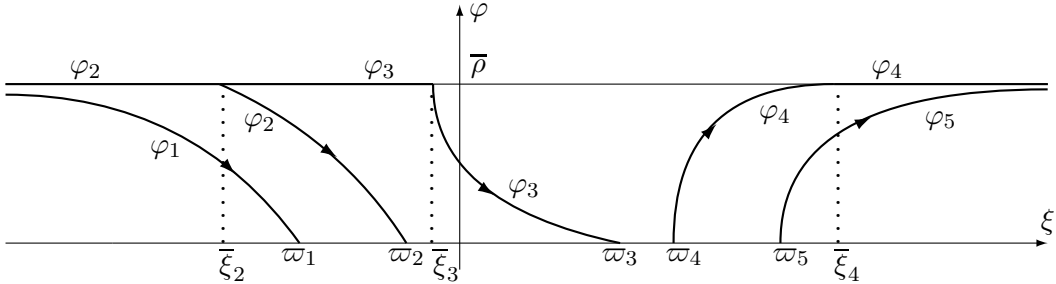



Notice that equation \eqref{e:E} with conditions (D) and (g) can only admit semi-wavefront solutions from (to) $\orho$. For a  profile $\phi$ of a semi-wavefront solution from $\orho$, we use the notation
\begin{equation}\label{e:xi0-sharp}
\oxi=\inf \left\{\xi <\varpi \, : \, \varphi(\xi)<\orho\right\}.
\end{equation}
We define $\oxi=\sup \left\{\xi >\varpi \, : \, \varphi(\xi)<\orho\right\}$ in the case of a semi-wavefront solution to $\orho$ . For every {\em sharp} semi-wavefront profile the corresponding  value $\oxi$ is a real number and coincides with $\xi_0$ introduced in Definition \ref{d:tws}, which as a consequence is unique. However, there are also classical semi-wavefronts for which $\oxi \in \mathbb{R}$ (see Theorem \ref{t:strictly}). More precisely, if a profile $\varphi$ from $\orho$ is sharp then $\oxi\in \R$ and, if $\varphi^{\, \prime}(\oxi^+)$ exists, then $\varphi^{\, \prime}(\oxi^+)\ne 0$, being possibly infinite; therefore, $\phi$ is not strictly monotone. If $\phi$ is a classical profile from $\orho$, then either $\oxi=-\infty$ or $\oxi\in\R$ and $\phi'(\oxi) =0$.

Here follows our first main result. It states that semi-wavefront solutions from (to) $\orho$ exist for every wave speed $c$; moreover, it establishes whether they are either classical or sharp, according to the different values of $c$. Roughly speaking, in the case of semi-wavefront solution from $\overline \rho$, slow profiles are sharp and fast profiles are classical while the converse holds for semi-wavefront to $\orho$. We denote by a dot the differentiation with respect to $\rho$.

\begin{theorem}[Existence of semi-wavefront solutions]\label{t:semi} Consider equation \eqref{e:E} under assumptions {\rm (D)}, or $({\rm \tilde D})$, and {\rm (g)}. Then, for every wave speed $c \in \mathbb{R}$, equation \eqref{e:E} has semi-wavefront solutions from $\overline \rho$ and to $\orho$, which are strict and unique (up to shifts) in the class of classical and sharp traveling-wave solutions. 

Moreover, let $\phi$ denote the wave profile. In case {\em (D)} we have that
\[
\text{$\phi$ is }
\left\{
\begin{array}{ll}
\text{sharp } & \hbox{ if } c<h(\orho),
\\
\text{classical } & \hbox { if } c> h(\orho),
\end{array}
\right.
\quad\hbox{ from }\orho,
\]
while
\[
\text{$\phi$ is }
\left\{
\begin{array}{ll}
\text{classical} & \hbox{ if } c<h(\orho),
\\
\text{sharp } & \hbox { if } c> h(\orho),
\end{array}
\right.
\quad\hbox{ to }\orho.
\]
In the case $c=h(\orho)$, the profiles are classical if $\dot D(\orho)<0$ while they can be either classical or sharp if $\dot D(\orho)=0$.

In case $({\rm \tilde D})$ the profiles are always classical.
\end{theorem}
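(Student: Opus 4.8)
The plan is to reduce the second-order problem \eqref{e:tws} to the singular first-order equation \eqref{e:zeq}, exactly as announced in Section \ref{s:first solution}, and to analyze the behavior of the solution $z$ near the two equilibria $\varphi=0$ and $\varphi=\orho$. First I would fix $c\in\R$ and, borrowing the existence machinery of \cite{Corli-Malaguti} (which, as stressed in the introduction, is insensitive to whether $D$ vanishes at $\orho$), construct a solution $z=z(\varphi)$ of \eqref{e:zeq} on a maximal left-neighborhood $(\varphi_-,\orho)$ of $\orho$ with $z<0$ there; this $z$ corresponds to a strictly decreasing profile, via $z(\varphi)=D(\varphi)\varphi'(\xi(\varphi))$. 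The two shooting directions at $\orho$ (from $\orho$, i.e.\ $\varphi$ decreasing as $\xi$ increases, versus to $\orho$, i.e.\ $\varphi$ increasing) are handled by the same equation with the sign of $z$ reversed; I would treat the ``from $\orho$'' case in detail and indicate that the ``to $\orho$'' case follows by the symmetry $\xi\mapsto -\xi$, $c\mapsto -c$, which swaps the roles of $c\lessgtr h(\orho)$.

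Next comes the core local analysis at $\orho$, which is where the dichotomy classical/sharp is decided. Write $z(\varphi)\sim a(\varphi-\orho)$ as $\varphi\to\orho^-$ for the putative leading behavior and plug into \eqref{e:zeq}: the left side $\dot z$ tends to $a$, the term $h(\varphi)-c$ tends to $h(\orho)-c$, and $D(\varphi)g(\varphi)/z(\varphi)$ is controlled once we know the order of vanishing of $D$ and $g$ at $\orho$. Under (D) with $\dot D(\orho)<0$ and (g) with, say, $g$ vanishing linearly, the product $D(\varphi)g(\varphi)$ vanishes quadratically, so $D(\varphi)g(\varphi)/z(\varphi)=O(\varphi-\orho)\to0$; the balance then forces $a=h(\orho)-c$. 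Thus if $c<h(\orho)$ we get $a>0$, i.e.\ $z$ stays strictly negative up to $\orho$ and $D(\varphi)\varphi'=z(\varphi)\to 0^-$ with $\varphi'(\oxi^+)=z(\varphi)/D(\varphi)\to (h(\orho)-c)/\dot D(\orho)<0$ finite and nonzero — a sharp profile with $\oxi\in\R$; whereas if $c>h(\orho)$ the only admissible local behavior has $z$ vanishing faster, the profile reaches $\orho$ only as $\xi\to+\infty$ (or with zero slope), giving a classical profile. I would make the ``$z$ vanishes faster'' alternative rigorous by a comparison/invariant-region argument on \eqref{e:zeq}: show that any solution forced to hit $z=0$ at a finite $\varphi<\orho$ would contradict $D,g>0$ on $(0,\orho)$, so the solution survives on all of $(0,\orho)$, and then read off its decay at $\orho$. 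The borderline $c=h(\orho)$ with $\dot D(\orho)<0$ collapses to the classical case because $a=0$ and a next-order expansion shows $z$ still does not detach from zero faster than $D$; when $\dot D(\orho)=0$ the product $D(\varphi)g(\varphi)$ no longer vanishes quadratically and both outcomes become possible, which is why the statement only claims ``classical or sharp'' there.

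For the behavior at the other end $\varphi\to 0^+$ I would invoke the corresponding analysis of \cite{Corli-Malaguti}: since $g(0)>0$ by (g), equation \eqref{e:zeq} has a genuine singularity at $0$ only through $D(0)g(0)/z$, and the sign $g(0)>0$ guarantees that $z$ can be continued down to $0$ (staying negative) and that $\oxi$ at that end is $+\infty$ — i.e.\ the profile approaches $0$ only asymptotically — so no sharpness is created at $0$; this yields a \emph{semi-}wavefront defined on a half-line, and strictness follows because the profile cannot be continued past $\orho$ (where $D$ vanishes) nor reach $0$ in finite $\xi$. Uniqueness up to shifts, within classical or sharp traveling waves, follows from uniqueness of the solution of the first-order Cauchy problem \eqref{e:zeq} together with Proposition \ref{l:sharp}, which pins down $D(\varphi)\varphi'\to 0$ at the sharp point and hence forces any sharp solution to arise from the same $z$.

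Finally, under $({\rm \tilde D})$ the argument at $\orho$ changes qualitatively: now $\dot D(\orho)=-\infty$ and the extra hypothesis that $\lim_{\varphi\to\orho^-} D(\varphi)g(\varphi)/(\varphi-\orho)$ exists in $(-\infty,0]$ means $D(\varphi)g(\varphi)$ again vanishes at least linearly, so the term $D(\varphi)g(\varphi)/z(\varphi)$ in \eqref{e:zeq} is bounded; rerunning the balance gives $\dot z(\orho)$ finite, hence $z$ vanishes linearly, $D(\varphi)\varphi'=z\to 0$, and $\varphi'(\oxi^+)=z/D\to 0$ because $D$ vanishes to infinite order ($\dot D(\orho)=-\infty$ makes $D$ decay faster than any multiple of $\varphi-\orho$). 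Thus $\oxi=+\infty$ and the profile is classical regardless of $c$, which is the last assertion. The main obstacle I anticipate is not any single estimate but the careful bookkeeping in the local expansion at $\orho$: making precise, under the sole hypotheses (D) or $({\rm \tilde D})$ (no prescribed polynomial order!), that the solution $z$ of \eqref{e:zeq} either detaches linearly from $0$ (with slope $h(\orho)-c$) or decays strictly faster, with the threshold at $c=h(\orho)$, and tying each alternative cleanly to the definitions of classical versus sharp in Definition \ref{d:tws}. This is where I would spend most of the effort, presumably via a sub/super-solution sandwich for \eqref{e:zeq} near $\orho$.
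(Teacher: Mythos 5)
Your overall strategy coincides with the paper's: reduce to the singular first--order problem \eqref{e:fo}, determine $\dot z(\orho)$, read off the classical/sharp dichotomy from $\lim_{\phi\to\orho^-}z(\phi)/D(\phi)$, and obtain the ``to $\orho$'' case by the reflection $\xi\mapsto-\xi$, $c\mapsto-c$. However, three points in your sketch are either genuinely open or wrong as stated.

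First, the decisive technical step is the case $c>h(\orho)$ with $\dot D(\orho)=0$, and your argument does not close it. Knowing that $z$ ``decays strictly faster than linearly'' (i.e.\ $\dot z(\orho)=0$, which itself requires a Dini-derivative argument to establish that $z$ is differentiable at $\orho$ at all --- you cannot simply posit $z\sim a(\phi-\orho)$) says nothing about the ratio $z/D$ when $D$ also vanishes superlinearly. The sub/super-solution sandwich you defer to must be run with the comparison function $\eta(\phi)=-\eps D(\phi)$ itself, for arbitrary $\eps>0$: one checks $\eta$ is an upper-solution of \eqref{e:zeq} near $\orho$ and that $z$ crosses above it at points accumulating at $\orho$, whence $-\eps<z/D<0$ near $\orho$ and $z/D\to0$. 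A sandwich aimed only at the decay rate of $z$ does not decide the profile type.

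Second, your treatment of $({\rm \tilde D})$ inverts the meaning of $\dot D(\orho)=-\infty$. This condition says $D(\phi)/(\orho-\phi)\to+\infty$, i.e.\ $D$ vanishes \emph{more slowly} than linearly (think $D(\phi)=\sqrt{\orho-\phi}$), not ``to infinite order''. Under your reading, with $z$ vanishing linearly, $z/D$ would blow up and the profile would be sharp --- the opposite of the claim. The correct computation is
\[
\frac{z(\phi)}{D(\phi)}=\frac{z(\phi)/(\phi-\orho)}{D(\phi)/(\phi-\orho)}\longrightarrow\frac{\dot z(\orho)}{-\infty}=0 ,
\]
using that $\dot z(\orho)$ exists and is finite (which, when $\ell<0$, requires the quadratic-equation analysis for $\dot z(\orho)$, not just the $\ell=0$ case).

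Third, your description of the $\phi=0$ end contradicts strictness. Since $g(0)>0$, the value $0$ is not an equilibrium and the profile cannot approach it asymptotically; it reaches $0$ at a \emph{finite} $\varpi$, and strictness means precisely that $\varpi\in\R$ and the profile is not extendible to all of $\R$ past that point. Your stated justification (``cannot reach $0$ in finite $\xi$'') would in fact negate the strictness assertion you are trying to prove.
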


We shall see in the proof, see also Remark \ref{rem:vanishing-order}, that, when (D) holds, in the case $c=h(\orho)$ and $\dot D(\orho)=0$ the possibility for a profile of being classical or sharp depends on the order of vanishing of $D$, $h-c$ and $g$ at $\orho$. Notice that the effect of assumption $({\rm \tilde D})$ consists in regularizing the profiles, in the sense that all of them are always classical. 

We now briefly consider source terms $g$ satisfying condition $(\hat{\rm g})$.
Under conditions ($\hat {\rm D}$) and ($\hat {\rm g}$),  the stationary solution of \eqref{e:E} is $u\equiv 0$; therefore, the asymptotic state of the possible semi-wavefront profiles is $0$. For simplicity, we state the following result only in the case of semi-wavefront solutions from $0$.

\begin{theorem}[A negative source term vanishing at $0$]\label{t:meno} Consider equation \eqref{e:E} under assumptions  $\hat {\rm (D)}$ and {\rm (}$\hat {\rm g}{\rm )}$. Then, for every wave speed $c \in \mathbb{R}$, equation \eqref{e:E} has a semi-wavefront solution from $0$ which is unique (up to shifts) in the class of classical or sharp solutions. Moreover, such a solution is strict.

In the case $D(0)>0$ the wave profile $\varphi$ is classical. If $D(0)=0$ then
\[
\text{$\phi$ is }
\left\{
\begin{array}{ll}
\text{sharp } & \hbox{ if } c<h(0),
\\
\text{classical } & \hbox { if } c> h(0);
\end{array}
\right.
\]
at last, if $c=h(0)$ then $\phi$ is classical if $\dot D(0)>0$, while in the case $\dot D(0)=0$ it can be either classical or sharp.
\end{theorem}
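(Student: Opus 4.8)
The strategy is to reduce Theorem~\ref{t:meno} to the first-order analysis already developed for Theorem~\ref{t:semi}, exploiting the formal symmetry between a source term vanishing at $0$ and one vanishing at $\orho$. First I would observe that, under $(\hat{\rm D})$ and $(\hat{\rm g})$, a semi-wavefront from $0$ must be strictly decreasing near $-\infty$ (since $g<0$ pushes $\phi$ down as $\xi$ increases away from $0$), so on the set where $0<\phi(\xi)<\orho$ the profile is strictly monotone and the order reduction of Section~\ref{s:first solution} applies: the profile corresponds to a solution $z(\varphi)$ of the singular equation \eqref{e:zeq} on an interval $(0,b)$ with $z<0$ there and $z(\varphi)\to 0$ as $\varphi\to 0^+$. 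The existence and uniqueness of such a $z$, together with the dichotomy ``$z'(0^+)$ finite versus $z'(0^+)=-\infty$'' governing classical versus sharp behaviour, is exactly the kind of statement proved for the $\orho$-endpoint in the proof of Theorem~\ref{t:semi}; I would set up the analogous shooting/comparison argument at the endpoint $\varphi=0$.

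Next I would carry out the local analysis of \eqref{e:zeq} near $\varphi=0$. Writing $z(\varphi)=D(\varphi)\varphi'(\xi(\varphi))$, one has $\dot z(\varphi)=h(\varphi)-c-D(\varphi)g(\varphi)/z(\varphi)$. The behaviour of the quotient $D(\varphi)g(\varphi)/\varphi$ as $\varphi\to 0^+$ is the key quantity. If $D(0)>0$, then $D(\varphi)g(\varphi)\sim D(0)g'(0^-)\varphi$-type control is not available directly (only $g\in C[0,\orho]$ with $g(0)=0$ is assumed), but $D(\varphi)g(\varphi)\to 0$ forces $z(0^+)=0$ and a balance argument shows $z(\varphi)\sim (h(0)-c)\varphi$ cannot degenerate since $D(0)>0$ means $\varphi'=z/D$ stays bounded; hence $\varphi$ is differentiable up to the endpoint and classical. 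If $D(0)=0$, the interplay between the vanishing order of $D$ at $0$ and the sign of $h(0)-c$ determines whether $z(\varphi)/\varphi$ tends to a finite limit or to $0$: when $c<h(0)$ the linear term $h(\varphi)-c$ in \eqref{e:zeq} is positive near $0$ and dominates, forcing $z$ to vanish faster than any linear rate relative to $D$ and producing $\varphi'(\oxi^+)=-\infty$, i.e. a sharp profile; when $c>h(0)$ the sign reverses and one gets $z(\varphi)\asymp \varphi$ with $\varphi'$ finite, hence classical. The borderline $c=h(0)$ splits according to $\dot D(0)$, precisely as in Remark~\ref{rem:vanishing-order} and the corresponding discussion for Theorem~\ref{t:semi}.

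Then I would globalize: having constructed the local solution $z$ near $0$, standard continuation for \eqref{e:zeq} (monotone in the right direction because $g<0$ on all of $(0,\orho]$, so $\dot z$ has a controlled sign structure and $z$ cannot return to $0$ or blow up in finite $\varphi$) extends it to a maximal interval, and from $z$ one recovers $\phi$ by integrating $\varphi'=z(\varphi)/D(\varphi)$ and inverting. The resulting profile is defined on a half-line $(-\infty,\varpi)$ or $(\varpi,+\infty)$, is strictly monotone where $0<\phi<\orho$, and is \emph{strict} because it cannot be continued as a bounded solution past the point where it would leave $[0,\orho]$ — the same non-pasting phenomenon recorded in Section~\ref{s:nature}. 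Uniqueness up to shifts follows from the uniqueness of $z$ in the class under consideration, invoking Proposition~\ref{l:sharp} to pin down the sharp case.

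\textbf{Main obstacle.} I expect the delicate point to be the borderline case $c=h(0)$, $\dot D(0)=0$, where both the linear term $h-c$ and the coefficient $D$ vanish at the endpoint and a single comparison does not decide the outcome: one genuinely needs the finer comparison of vanishing orders of $D$, $h-c$ and $g$ at $0$, and must exhibit both possibilities can occur. A secondary technical nuisance is that $(\hat{\rm g})$ only assumes $g\in C[0,\orho]$ with $g(0)=0$, so near $0$ one has no quantitative rate for $g$; the argument must therefore be phrased in terms of $\limsup/\liminf$ of $D(\varphi)g(\varphi)/\varphi$ rather than a clean asymptotic, mirroring the role of the last condition in $({\rm g}_0)$. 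Apart from this, the proof is a near-verbatim transcription of the Theorem~\ref{t:semi} argument with $\orho$ replaced by $0$ and the monotonicity direction reversed, so I would state it concisely and refer back to the earlier proof for the shared computations.
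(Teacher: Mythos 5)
Your plan is workable in outline but it misses the one idea that makes the paper's proof essentially trivial, and as written it contains two concrete errors that would derail a literal execution. The paper does \emph{not} redo the endpoint analysis at $\varphi=0$: it sets $\hat f(\rho)=-f(\orho-\rho)$, $\hat D(\rho)=D(\orho-\rho)$, $\hat g(\rho)=-g(\orho-\rho)$ and observes that $\phi$ solves \eqref{e:tws} iff $\psi=\orho-\phi$ solves the hatted equation; under $(\hat{\rm D})$ and $(\hat{\rm g})$ the hatted data satisfy (g) together with either (D) (when $D(0)=0$) or $D(\orho)>0$ (covered by \cite{Corli-Malaguti}), so Theorem \ref{t:semi} applies verbatim, with $\hat h(\orho)=h(0)$ and $\dot{\hat D}(\orho)=-\dot D(0)$ giving exactly the stated classification; the transfer of sharpness uses $D(\phi)\phi'=-\hat D(\psi)\psi'$ and Proposition \ref{l:sharp}. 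Your route — re-deriving the singular first-order problem and its endpoint asymptotics at $\varphi=0$ — would in effect re-prove Theorem \ref{t:semi} in mirror image, including a new existence lemma for the boundary-value problem with the roles of the endpoints exchanged, which Lemma \ref{l:Ex-sing2017} does not directly cover. That is not wrong, but it buys nothing over the reflection.

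Two points in your sketch are actually incorrect. First, under $(\hat{\rm g})$ a semi-wavefront from $0$ is necessarily \emph{increasing}: if $\phi$ were monotone decreasing with $\phi(\xi)\to 0$ as $\xi\to-\infty$ and $\phi\ge 0$, it would be identically $0$. Hence on the set where $0<\phi<\orho$ one has $z=D(\phi)\phi'>0$, not $z<0$; all the sign conventions in your first-order problem must be reversed. Second, your mechanism for the sharp/classical dichotomy is inverted. In the sharp regime ($c<h(0)$, i.e.\ $c<\hat h(\orho)$ after reflection) the relevant fact is that $\dot z$ at the equilibrium equals $h(0)-c\ne 0$, so $z$ vanishes \emph{exactly linearly}; since $D(0)=0$, the quotient $z/D$ then fails to tend to $0$ (it tends to $(h(0)-c)/\dot D(0)$ or to $\pm\infty$), which is what makes $\phi'(\oxi^{\pm})\ne 0$ and the profile sharp. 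In the classical regime ($c>h(0)$) one has $\dot z=0$ at the equilibrium, $z$ vanishes \emph{faster} than linearly, and one still needs the upper-solution comparison of case \emph{(iii)} of the proof of Theorem \ref{t:semi} to conclude $z/D\to 0$ when $\dot D=0$ there. Your description (``$z$ vanishes faster than any linear rate'' producing sharpness, ``$z\asymp\varphi$'' producing classical) has these two cases swapped, so the local analysis as you describe it would reach the wrong conclusions even though you quote the correct final statement.
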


Now, we come back to assumption (D) and (g). {\em Sharp} semi-wavefront profiles cannot be strictly monotone because they are constant for $\xi\in(-\infty,\oxi]$ or for $\xi\in [\oxi, \infty)$. On the contrary, {\em classical} profiles can be either strictly or non-strictly monotone; the following result gives some simple conditions on the forcing term $g$ that show when this happens. An analogous result was given in \cite{Corli-Malaguti} by exploiting the assumption $D(\orho)>0$; however, its proof does not extend straightforwardly to cover the case when (D) holds.

\begin{theorem}[Characterization of strictly monotone solutions]\label{t:strictly} Consider equation \eqref{e:E} under assumptions  {\rm (D)} and {\rm (g)}. Let $\phi$ be a wave profile with wave speed $c$ of some semi-wavefront solution from (to) $\orho$ and let $L>0$ be a constant.
\begin{itemize}
\item[{(i)}] If
$g(\rho)\le L(\overline \rho-\rho)$ in a left neighborhood of $\orho$
and 
$c>h(\overline\rho)$ (resp., $c<h(\orho)$), then $\phi$ is strictly monotone, i.e., $\varphi(\xi)<\overline \rho$ for every $\xi$ in its domain.

\item[{(ii)}] If
$g(\rho)\ge L(\overline \rho-\rho)^{\alpha}$ in a left neighborhood of $\orho$ 
for some $\alpha \in (0,1)$, then $\phi$ is non-strictly monotone, i.e., $\varphi(\xi)\equiv \overline \rho$ in $(-\infty, \overline \xi]$ (resp., in $[\overline \xi,+\infty)$), for some $\overline \xi$ in its domain.
\end{itemize}
\end{theorem}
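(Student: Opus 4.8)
The plan is to handle the two parts by different devices, both built on the second-order equation \eqref{e:tws} (equivalently, on its reduced form from Section \ref{s:first solution}) and on the strict monotonicity of a profile on the set where $0\le\phi<\orho$. I argue for a semi-wavefront \emph{from} $\orho$; the case \emph{to} $\orho$ follows by the obvious sign changes, with $c-h(\orho)$ replaced by $h(\orho)-c$. Throughout put $\psi:=\orho-\phi\ge0$ (so $\psi$ is increasing, $\psi\ge0$) and $\mu:=|c-h(\orho)|>0$; on the sub-interval where $0<\phi<\orho$ the profile equation reads $\bigl(D(\phi)\psi'\bigr)'+(c-h(\phi))\psi'=g(\phi)$, and I use $D(\phi)\le\|\dot D\|_{C^0}(\orho-\phi)$, valid since $D\in C^1$ and $D(\orho)=0$.

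For part {\rm (i)} I would argue by contradiction. Since $c>h(\orho)$, Theorem \ref{t:semi} already gives that $\phi$ is classical; suppose it is \emph{not} strictly monotone, so $\phi\equiv\orho$ on $(-\infty,\oxi\,]$ with $\oxi\in\R$ and $\phi$ is strictly decreasing on $(\oxi,\oxi+\eta)$ for some $\eta>0$. Shrinking $\eta$, on that interval $c-h(\phi)\ge\mu/2$ and $g(\phi)\le L(\orho-\phi)=L\psi$, and the absolutely continuous function $Q:=D(\phi)\psi'\ge0$ satisfies $Q(\oxi^+)=0$ (because $D(\phi)\phi'$ is continuous and vanishes identically on $(-\infty,\oxi\,]$). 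Integrating the reduced equation from $\oxi$ to $\xi$ and changing variables $u=\psi(\tau)$ in the term $\int(c-h(\phi))\psi'$ gives $0\le Q(\xi)\le L\int_{\oxi}^{\xi}\psi(\tau)\,d\tau-\tfrac{\mu}{2}\psi(\xi)$, hence $\psi(\xi)\le\tfrac{2L}{\mu}\int_{\oxi}^{\xi}\psi$ on $(\oxi,\oxi+\eta)$. A Gronwall estimate then forces $\psi\equiv0$ near $\oxi$, contradicting strict monotonicity there. Notice this disposes of (i) with no fine information on $D\phi'$.

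For part {\rm (ii)} I would first dispense with the sharp case, where the conclusion is immediate since a sharp profile from $\orho$ is by definition constant on a half-line $(-\infty,\oxi\,]$ with $\oxi\in\R$ (by Theorem \ref{t:semi} this in particular covers $c<h(\orho)$); so it remains to treat classical profiles. For these I pass to the reduced equation \eqref{e:zeq}, whose solution $z(\phi)=D(\phi)\phi'(\xi(\phi))<0$ satisfies $z(\phi)\to0^-$ as $\phi\to\orho^-$; since $d\xi/d\phi=D(\phi)/z(\phi)$, the profile is non-strictly monotone (i.e.\ $\oxi\in\R$) exactly when $\int_{\phi_0}^{\orho}D(\phi)/|z(\phi)|\,d\phi<\infty$, which is what I prove. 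From \eqref{e:zeq} and $z<0$ one has the pointwise identity $D(\phi)g(\phi)/|z(\phi)|=\dot z(\phi)+(c-h(\phi))>0$; integrating it on $(\phi_0,\orho)$ shows $\int_{\phi_0}^{\orho}Dg/|z|<\infty$, hence $\dot z\in L^1(\phi_0,\orho)$ and $z$ extends absolutely continuously up to $\orho$. Then, using $g(\phi)\ge L(\orho-\phi)^{\alpha}$ near $\orho$, $\int_{\phi_0}^{\orho}D/|z|\,d\phi=\int_{\phi_0}^{\orho}\bigl(\dot z+c-h\bigr)/g\,d\phi\le\tfrac1L\int_{\phi_0}^{\orho}\bigl(\dot z+c-h\bigr)(\orho-\phi)^{-\alpha}\,d\phi$; the $c-h$ contribution is $\le\tfrac{2\mu}{L}\int_{\phi_0}^{\orho}(\orho-\phi)^{-\alpha}d\phi<\infty$ because $\alpha<1$, and the $\dot z$ contribution is finite after one integration by parts, the boundary term at $\orho$ vanishing since $|z(\phi)|(\orho-\phi)^{-\alpha}\le C(\orho-\phi)^{1-\alpha}\to0$ and the remaining integral being $\le\alpha C\int_{\phi_0}^{\orho}(\orho-\phi)^{-\alpha}d\phi<\infty$. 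This yields the required finiteness.

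The step I expect to be the real obstacle is the a priori control of $z$ near the degenerate point, namely $z(\phi)\to0$ together with $|z(\phi)|\le C(\orho-\phi)$ on a left neighbourhood of $\orho$, which is precisely what legitimizes the integration by parts in (ii). This is where the simultaneous vanishing of $D$ and $g$ at $\orho$ bites and where the elementary arguments of \cite{Corli-Malaguti} (valid when $D(\orho)>0$) break down. The bound should be extracted from \eqref{e:zeq} itself: at an interior local maximum $\phi_m$ of $|z|$ one has $\dot z(\phi_m)=0$, whence $|z(\phi_m)|=D(\phi_m)g(\phi_m)/(c-h(\phi_m))\le\tfrac{2}{\mu}D(\phi_m)g(\phi_m)\le C(\orho-\phi_m)$, and at points where $|z|$ is non-increasing one has $|z|\le D g/(c-h)\le C(\orho-\phi)$ directly, so $|z(\phi)|\le\max_{[\phi,\orho)}|z|\le C(\orho-\phi)$; but making this rigorous is the delicate part, and it mirrors the analysis of the singular point needed for Theorem \ref{t:semi} (cf.\ also Remark \ref{rem:vanishing-order}).
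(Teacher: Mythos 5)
Your proposal is correct, but it reaches both conclusions by routes genuinely different from the paper's. For part \emph{(i)} the paper works on the reduced first-order problem \eqref{e:fo}: it builds the explicit upper-solutions $\eta_n(\phi)=D(\phi)\bigl[a(\phi-\orho)-\tfrac1n\bigr]$ with $a>L/(c-h(\orho))$, uses the comparison lemma of \cite{Corli-Malaguti} together with $\dot z(\orho)=0$ from Proposition \ref{p:slopePDnew} to get $z>\eta_n$ near $\orho$, and concludes $\oxi=-\infty$ from $\int D/\eta_n\to-\infty$. Your Gronwall argument on the integrated second-order equation is more elementary: it bypasses the upper-solution machinery and even the knowledge of $\dot z(\orho)$, needing only that a classical profile has $D(\phi)\phi'$ absolutely continuous and vanishing at $\oxi$ (Proposition \ref{l:sharp}); the only point to make explicit is that $\psi=\orho-\phi$ is absolutely continuous up to $\oxi$ so that $\int_{\oxi}^{\xi}\psi'=\psi(\xi)$, which follows since $\psi$ is monotone and $C^1$ on compact subintervals of $(\oxi,\varpi)$. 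For part \emph{(ii)} the paper again argues by comparison, with the lower-solutions $\omega_n(\phi)=-kD(\phi)(\orho-\tfrac1n-\phi)^\beta$, $\beta\in(\tfrac{\alpha+1}{2},1)$, obtaining $|z|\ge kD(\phi)(\orho-\phi)^{\beta}$ and hence $\int D/|z|\le\tfrac1k\int(\orho-\phi)^{-\beta}<\infty$; your route via the identity $D g/|z|=\dot z+c-h$ and an integration by parts against $(\orho-\phi)^{-\alpha}$ is slicker and avoids choosing $\beta$. The estimate you flag as the ``real obstacle,'' namely $|z(\phi)|\le C(\orho-\phi)$ near $\orho$, is not actually a gap: under (D) and (g) one has $\ell=\lim_{\phi\to\orho^-}D(\phi)g(\phi)/(\phi-\orho)=\dot D(\orho)\cdot g(\orho)=0$, so Proposition \ref{p:slopePDnew} gives $\dot z(\orho)=0$ for $c\ge h(\orho)$, i.e.\ $z(\phi)=o(\orho-\phi)$, which is stronger than what you need and can simply be cited. (Your own sketch of that bound via interior maxima of $|z|$ would break down at $c=h(\orho)$, where you divide by $c-h(\phi_m)$ which may vanish; but since the proposition already covers this case, the sketch is superfluous.) With that citation inserted, both parts of your argument are complete.
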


The borderline case $c=h(\orho)$ is not considered in case {\em (i)} above, since it involves a heavier technical analysis. We refer to Figure \ref{f:slow-fast} for a graphical representation of Theorem \ref{t:semi} and Theorem \ref{t:strictly}. The extension of Theorem \ref{t:strictly} to the case when assumption ($\hat {\rm g}$) holds is straightforward and, then, omitted.


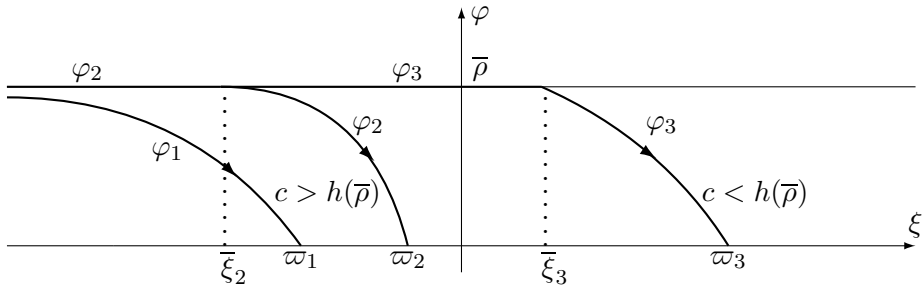
\begin{figure}[htbp]
\begin{picture}(100,100)(-80,-10)
\setlength{\unitlength}{1pt}

\put(0,0){
\put(0,0){\vector(1,0){300}}
\put(0,0){\line(-1,0){40}}
\put(0,60){\line(1,0){300}}
\put(0,60){\line(-1,0){40}}
\put(300,8){\makebox(0,0){$\xi$}}
\put(130,-10){\vector(0,1){100}}
\put(137,87){\makebox(0,0){$\phi$}}
\put(137,67){\makebox(0,0){$\overline{\rho}$}}

\put(80,25){\makebox(0,0)[t]{$c>h(\orho)$}}

\put(-40,0){
\put(0,0){\thicklines{\qbezier(80,60)(135,60)(150,0)}}
\put(80,60){\thicklines{\line(-1,0){80}}}
\put(141,42){\makebox(0,0)[br]{$\phi_2$}}
\put(30,62){\makebox(0,0)[b]{$\phi_2$}}
\put(150,-2){\makebox(0,0)[t]{$\varpi_2$}}
\put(134.5,35){\thicklines{\vector(1,-1){3}}}
\multiput(80,0)(0,5){12}{$.$}
\put(80,-2){\makebox(0,0)[tl]{$\oxi_2$}}
}

\put(-40,0){
\put(0,0){\thicklines{\qbezier(0,56)(70,56)(110,0)}}
\put(60,32){\makebox(0,0)[b]{$\phi_1$}}
\put(110,-2){\makebox(0,0)[t]{$\varpi_1$}}
\put(83,29){\thicklines{\vector(1,-1){3}}}
}

\put(80,0){
\put(0,0){\thicklines{\qbezier(80,60)(125,40)(150,0)}}
\put(80,60){\thicklines{\line(-1,0){120}}}
\put(131,42){\makebox(0,0)[br]{$\phi_3$}}
\put(30,62){\makebox(0,0)[b]{$\phi_3$}}
\put(150,-2){\makebox(0,0)[t]{$\varpi_3$}}
\put(120,35){\thicklines{\vector(3,-2){3}}}
\multiput(80,0)(0,5){12}{$.$}
\put(80,-2){\makebox(0,0)[tl]{$\oxi_3$}}
\put(160,25){\makebox(0,0)[t]{$c<h(\orho)$}}
}

}
\end{picture}
\caption{\label{f:slow-fast}{A strictly decreasing classical profile $\phi_1$ occurring in case {\em (i)}; a non-strictly decreasing classical profile $\phi_2$ occurring in case {\em (ii)}; a sharp profile $\phi_3$.}}
\end{figure}

We give now a result of convergence of semi-wavefronts to wavefronts. As we mentioned in the Introduction, if $g_0$ satisfies (${\rm g}_0$) then the corresponding equation
\begin{equation}\label{e:e0}
\rho_t + f(\rho)_x=\left(D(\rho)\rho_x\right)_x+g_0(\rho), \qquad t\ge 0, \, x\in \R,
\end{equation}
has a wavefront solution connecting $\orho$ with $0$, for every wave speed  $c\ge c_0^*$; the corresponding profile is decreasing and estimates are available for the threshold speed $c_0^*$, see \cite{Bonheure-Sanchez, GK, Malaguti-Marcelli_2002}. We also consider a strictly decreasing sequence $\{g_n\}_{n\ge1}$ of source terms satisfying condition (g) and converging uniformly to $g_0$, see Figure \ref{f:gs}. As proved in Theorem \ref{t:semi}, the corresponding equations
\begin{equation}\label{e:en}
\rho_t + f(\rho)_x=\left(D(\rho)\rho_x\right)_x+g_n(\rho), \qquad t\ge 0, \, x\in \R,\quad n\ge1,
\end{equation}
admit semi-wavefront solutions from and to $\orho$, for every wave speed $c$. For  simplicity, we restrict our discussion to the significative cases $({\rm D})$ and $({\rm \tilde D})$.


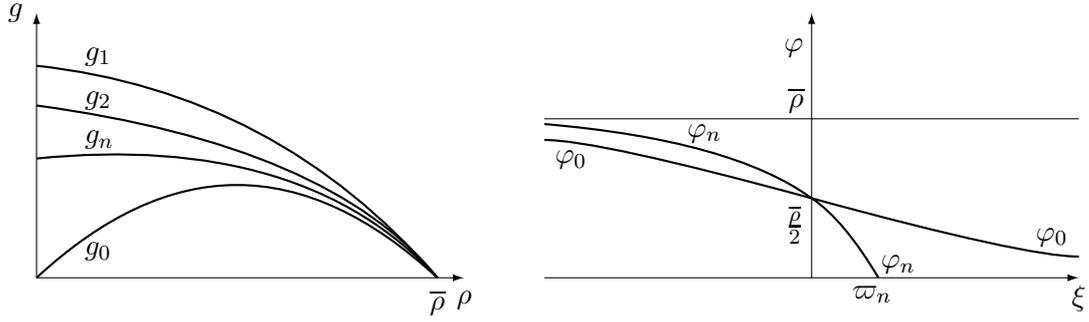
\begin{figure}[htbp]
\begin{picture}(100,110)(-80,-10)
\setlength{\unitlength}{1pt}

\put(0,0){
\put(-60,0){
\put(0,0){\vector(1,0){160}}
\put(160,-5){\makebox(0,0)[t]{$\rho$}}
\put(0,0){\vector(0,1){100}}
\put(-5,100){\makebox(0,0)[r]{$g$}}
\put(150,-5){\makebox(0,0)[t]{$\overline{\rho}$}}

\put(0,0){\thicklines{\qbezier(0,0)(75,70)(150,0)}} 
\put(18,10){\makebox(0,0)[l]{$g_0$}}
\put(0,0){\thicklines{\qbezier(0,80)(90,70)(150,0)}} 
\put(18,80){\makebox(0,0)[lb]{$g_1$}}
\put(0,0){\thicklines{\qbezier(0,65)(110,50)(150,0)}} 
\put(18,67){\makebox(0,0)[l]{$g_2$}}
\put(0,0){\thicklines{\qbezier(0,45)(100,55)(150,0)}} 
\put(18,52){\makebox(0,0)[l]{$g_n$}}
}

\put(230,0){
\put(0,0){\vector(1,0){100}}
\put(0,0){\line(-1,0){100}}
\put(0,60){\line(1,0){100}}
\put(0,60){\line(-1,0){100}}
\put(100,-3){\makebox(0,0)[t]{$\xi$}}
\put(0,0){\vector(0,1){100}}
\put(-3,87){\makebox(0,0)[r]{$\phi$}}
\put(-3,67){\makebox(0,0)[r]{$\overline{\rho}$}}
\put(-3,27){\makebox(0,0)[rt]{$\frac{\overline{\rho}}{2}$}}

\put(0,0){\thicklines{\qbezier(-100,52)(-80,52)(0,30)}}
\put(0,0){\thicklines{\qbezier(0,30)(80,8)(100,8)}}
\put(-90,48){\makebox(0,0)[t]{$\phi_0$}}
\put(90,12){\makebox(0,0)[b]{$\phi_0$}}

\put(0,0){\thicklines{\qbezier(-100,58)(-30,52)(0,30)}}
\put(0,0){\thicklines{\qbezier(0,30)(12,22)(25,0)}}
\put(-40,58){\makebox(0,0)[t]{$\phi_n$}}
\put(26,5){\makebox(0,0)[l]{$\phi_n$}}
\put(30,-3){\makebox(0,0)[tr]{$\varpi_n$}}

}
}
\end{picture}
\caption{\label{f:gs}{Left: the function $g_0$ and the sequence $\{g_n\}_{n\ge1}$. Right: the profiles $\phi_0$ and $\phi_n$.}}
\end{figure}

Since profiles $\phi_n$, $n\ge0$, to either \eqref{e:e0} or \eqref{e:en} are uniquely defined only up to shifts, we fix their values at $\xi=0$ by imposing
\begin{equation}\label{e:phi(0)}
\phi_n(0) = \frac{\orho}{2},\quad n\ge0.
\end{equation}
We can now state our convergence result, see Figure \ref{f:gs}.

\begin{theorem}[Convergence of semi-wavefront profiles to a wavefront profile]\label{t:convergence} Assume either condition $({\rm \hat D})$ or $({\rm \tilde D})$. Consider $g_0\in C[0,\orho]$ satisfying $({\rm g}_0)$ and let $\{g_n\}_{n\ge1}$ be a decreasing sequence satisfying \emph{(g)} such that $g_n\to g_0$ uniformly. Moreover, for $c \ge c_0^*$  let $\phi_0$ be the wavefront profile of \eqref{e:e0} and $\phi_n$ the semi-wavefront profile of \eqref{e:en} from $\orho$, both of them with wave speed $c$ and satisfying \eqref{e:phi(0)}.

Then $\phi_n\to\phi_0$ in $C^1_{\rm loc}(J)$, where $J$ is the maximal open interval where $0<\phi_0 <\orho$.
\end{theorem}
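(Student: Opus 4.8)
The plan is to run the argument on the first-order reduction \eqref{e:zeq} introduced in Section~\ref{s:first solution}. For $n\ge1$ let $z_n$ be the function associated as in Section~\ref{s:first solution} with the semi-wavefront profile $\phi_n$ of \eqref{e:en} from $\orho$ (on the interval where $0<\phi_n<\orho$, which $\phi_n$ maps onto $(0,\orho)$), and let $z_0$ be the one associated with the wavefront profile $\phi_0$ of \eqref{e:e0}. Each $z_n$, $n\ge0$, solves \eqref{e:zeq} on $(0,\orho)$ with $g$ replaced by $g_n$, is strictly negative there (the profiles being decreasing and $D>0$ on $(0,\orho)$), and satisfies $z_n(\varphi)\to0$ as $\varphi\to\orho^-$. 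The scheme is: (i) prove $z_n\to z_0$ in $C^1_{\mathrm{loc}}\big((0,\orho)\big)$; (ii) transfer this to the profiles through $\phi_n'=z_n(\phi_n)/D(\phi_n)$, $\phi_n(0)=\orho/2$.

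For step (i) I would first record a monotone dependence of the solution of \eqref{e:zeq} on the source term: if $\tilde g\ge g$ on $(0,\orho)$ and $\tilde z,z$ denote the corresponding negative solutions vanishing at $\orho^-$, then $\tilde z$ and $z$ are ordered on $(0,\orho)$. This is a short comparison argument on \eqref{e:zeq} (note that $-Dg/z$ is increasing in $g$ when $z<0$), in the spirit of the monotonicity results of \cite{Corli-Malaguti} and of the construction in Section~\ref{s:first solution}. Since $\{g_n\}$ is decreasing and $g_n\ge g_0$, the sequence $\{z_n\}$ is monotone in $n$ and squeezed between $z_0$ and $z_1$; hence it converges pointwise on $(0,\orho)$ to some $\bar z$ with $z_0\le\bar z\le z_1<0$ (or with the reversed inequalities). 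On any compact $K\subset(0,\orho)$ one has $z_n\le z_1\le\sup_K z_1<0$, so $\{z_n\}$ is bounded and bounded away from $0$ on $K$; by \eqref{e:zeq} the derivatives $\dot z_n$ are then bounded uniformly in $n$ on $K$, so $\{z_n\}$ is equi-Lipschitz there and the pointwise convergence is in fact uniform on $K$, with $\bar z$ Lipschitz. Feeding this back into \eqref{e:zeq} gives $\dot z_n\to\dot{\bar z}$ locally uniformly (here $g_n\to g_0$ uniformly is used), so $\bar z$ solves \eqref{e:zeq} with $g=g_0$ on $(0,\orho)$. Finally, squeezing $z_n$ between $z_0$ and $z_1$ near $\orho$, both tending to $0$, yields $\bar z(\varphi)\to0$ as $\varphi\to\orho^-$; by the uniqueness of such a solution established in Section~\ref{s:first solution} we get $\bar z=z_0$, which proves (i).

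For step (ii), view the profile equation on $J$ as the non-singular ODE $\phi'=F_n(\phi)$ with $F_n(\varphi):=z_n(\varphi)/D(\varphi)$ and $\phi_n(0)=\orho/2$; let $\phi_0$ solve the analogous problem with $F_0:=z_0/D$. On every compact subinterval of $J$ the profile $\phi_0$ takes values in a compact subset $[\varphi_1,\varphi_2]$ of $(0,\orho)$, where by (i) the $F_n$ converge uniformly to $F_0$ and are equi-Lipschitz. A standard continuous-dependence (Gronwall) estimate then gives $\phi_n\to\phi_0$ uniformly on every compact subinterval of $J$; in particular $\phi_n$ stays in a compact subset of $(0,\orho)$ there for $n$ large, so the construction is consistent, and $\phi_n'=F_n(\phi_n)\to F_0(\phi_0)=\phi_0'$ uniformly on compacts of $J$. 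This is precisely $\phi_n\to\phi_0$ in $C^1_{\mathrm{loc}}(J)$.

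The main obstacle is step (i) and, inside it, the control near the degenerate endpoint $\orho$: since \eqref{e:zeq} is singular there (both because $D(\orho)=0$ and because $z_n(\orho^-)=0$), the limit $\bar z$ cannot be identified with $z_0$ by a naive Cauchy-problem continuous dependence, but only through its boundary behaviour at $\orho$ together with the uniqueness from Section~\ref{s:first solution}; the squeeze $z_0\le z_n\le z_1$ (or its reverse) is what makes this work. In case $({\rm \tilde D})$ one must moreover phrase the monotone dependence and the endpoint behaviour with the mixed limit of $({\rm \tilde D})$ in mind, since $\dot D(\orho)=-\infty$ governs the decay of $z_n$ at $\orho$, but the comparison argument itself is unaffected.
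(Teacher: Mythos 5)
Your overall strategy---a comparison lemma for the first-order problem \eqref{e:fo} with respect to the source term, monotone convergence of the squeezed sequence $\{z_n\}$ to a limit $\bar z$ solving the equation with $g_0$, and then continuous dependence for the initial-value problem $\phi'=z_n(\phi)/D(\phi)$, $\phi(0)=\orho/2$, on compact subintervals of $J$---is exactly the paper's (Lemma \ref{l:mon2}, Proposition \ref{p:conv zn} and the final proof). The genuine gap is in the identification $\bar z=z_0$. You justify it by ``the uniqueness of such a solution established in Section \ref{s:first solution}'', i.e.\ Lemma \ref{l:Ex-sing2017}; but that lemma is stated under (g), which requires $g(0)>0$, and therefore does not apply to the limit source $g_0$, which satisfies $({\rm g}_0)$ and vanishes at $0$. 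For $g_0$ the only uniqueness invoked in the paper is that of \cite{MMconv}, which concerns the two-point problem $z(0^+)=z(\orho^-)=0$, whereas your $\bar z$ is only known to satisfy the condition at $\orho$: the squeeze $z_1\le\bar z\le z_0$ gives $\bar z(0^+)\le 0$, not $\bar z(0^+)=0$. The paper closes exactly this hole: assuming $\bar z(0)<0$, it shows that $z_0-\bar z$ is positive and strictly increasing on $(0,\orho)$ (since $\dot z_0-\dot{\bar z}=D g_0\bigl(\frac{1}{-z_0}-\frac{1}{-\bar z}\bigr)>0$ once $\bar z<z_0$), contradicting $\bar z(\orho^-)=z_0(\orho^-)=0$, and only then applies the two-point uniqueness. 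You need either this argument or an explicit one-endpoint uniqueness statement at $\orho$ under $({\rm g}_0)$; the latter is true and provable by the same monotone-difference trick, but it is not what Section \ref{s:first solution} establishes.

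Two smaller points. First, the orientation of your squeeze is reversed: Lemma \ref{l:mon2} gives $z_1\ge z_2$ when $g_1\le g_2$, so $g_0\le g_n\le g_1$ yields $z_1\le z_n\le z_0<0$ on $(0,\orho)$ with $\{z_n\}$ increasing; the bound keeping $z_n$ away from $0$ on a compact $K$ is $z_n\le z_0\le\max_K z_0<0$, not $z_n\le z_1$. This is harmless but should be fixed. Second, your ``short comparison argument'' at a touching point only produces a non-strict inequality between $\dot z_1$ and $\dot z_2$ when $g_1\le g_2$ non-strictly and the speeds coincide, so it does not immediately propagate the ordering; the paper handles the equal-speed case by approximating with strictly larger speeds and passing to the limit (case \emph{(ii)} of Lemma \ref{l:mon2}), and some such device (or a Gronwall estimate on the difference, using that $Dg/(z_1z_2)\ge0$) is needed to make the comparison rigorous.
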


Theorem \ref{t:convergence} deserves some comments. First, note that $\phi_0$ can be either strictly monotone or not; in the latter case, it can be either classic or sharp at one or even both equilibria of $g$. Second, an analogous result holds for semi-wavefront profiles to $\orho$. Third, much more general results can be given if we also let the diffusivity, flux and wave speed vary and converge to some limit functions \cite{MMM10}. We focused on the source terms because they determine whether solutions are either semi-wavefronts (in case (g) holds) or wavefronts (in case $({\rm g}_0)$).

Now, we assume that the source term $g\in C[0,\orho]$ satisfies (${\rm g}_1$) and
\begin{equation}\label{eq:goodg}
|g(\rho)|\ge L\left|\rho_0-\rho\right|^{\alpha}\quad \hbox{in a neighborhood of $\rho_0$,}
\end{equation}
for some $\alpha \in (0,1)$ and $L>0$; see Figure \ref{f:gchanges}. We aim at constructing traveling-wave solutions whose profiles are defined {\em through} the equilibrium point $\rho_0$; this will be obtained through a suitable pasting of some semi-wavefront solutions. 


\begin{figure}[htbp]
\begin{picture}(100,120)(-80,-10)
\setlength{\unitlength}{1pt}

\put(60,40){
\put(0,0){\vector(1,0){200}}
\put(200,-5){\makebox(0,0)[t]{$\rho$}}
\put(0,0){\vector(0,1){75}}
\put(0,0){\line(0,-1){50}}
\put(-5,75){\makebox(0,0)[r]{$g$}}
\multiput(149,-0.5)(0,-5){12}{$.$}
\put(150,5){\makebox(0,0)[b]{$\overline{\rho}$}}
\put(78,5){\makebox(0,0)[bl]{$\rho_0$}}

\put(0,0){\thicklines{\qbezier(0,60)(35,50)(75,0)}} 
\put(0,0){\thicklines{\qbezier(75,0)(100,-30)(150,-50)}} 
\put(20,45){\makebox(0,0)[t]{$g_1$}}

\put(0,0){\thicklines{\qbezier(0,65)(75,50)(75,0)}} 
\put(0,0){\thicklines{\qbezier(75,0)(75,-30)(150,-58)}} 
\put(20,70){\makebox(0,0)[t]{$g_2$}}

}
\end{picture}
\caption{\label{f:gchanges}{Two functions satisfying (g${}_1$): here, $g_2$ satisfies \eqref{eq:goodg} while $g_1$ does not.}}
\end{figure}
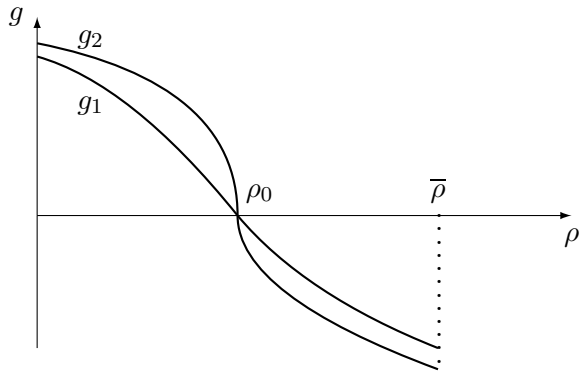

\begin{theorem}[Existence of traveling-wave solutions]\label{t:tws} Consider equation \eqref{e:E} under assumptions {\rm ($\hat {\rm D}$)}, {\rm(${\rm g}_1$)} and \eqref{eq:goodg}. Then, for every wave speed $c \in \mathbb{R}$ equation \eqref{e:E} has for solutions:
\begin{enumerate}[(1)]

 \item a traveling wave $\phi_1$ assuming any value in $[0, \orho]$ and with a strictly increasing profile;

 \item a traveling wave $\phi_2$ assuming any value in $[0, \orho]$ and with a strictly decreasing profile;

 \item a traveling wave $\phi_3$ with values in the interval $[0, \rho_0]$;

 \item a traveling wave $\phi_4$ with values in the interval $[\rho_0,\orho]$.

 \end{enumerate}
All these traveling-wave solutions are strict and classical; moreover, they are unique (up to shifts) in the class of classical and sharp strictly monotone traveling-wave solutions.
\end{theorem}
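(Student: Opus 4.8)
The plan is to split the density interval $[0,\orho]$ at the stable equilibrium $\rho_0$ of $g$ and to analyze separately the two sub-problems obtained by restricting \eqref{e:E} to $[0,\rho_0]$ and to $[\rho_0,\orho]$; the profiles $\phi_3$ and $\phi_4$ will then be semi-wavefronts of these sub-problems, while $\phi_1$ and $\phi_2$ will be produced by pasting two such semi-wavefronts at $\rho_0$. On $[0,\rho_0]$ the source term satisfies $g>0$ in $[0,\rho_0)$ and $g(\rho_0)=0$, i.e.\ it is of type (g) with $\rho_0$ in place of $\orho$; since $(\hat{\rm D})$ forces $D(\rho_0)>0$ (while $D(0)\ge 0$ is free), the existence, the uniqueness up to shifts and the regularity of semi-wavefronts from and to $\rho_0$, for every speed $c$, are provided by \cite{Corli-Malaguti}. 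On $[\rho_0,\orho]$ the source term satisfies $g<0$ in $(\rho_0,\orho]$ and $g(\rho_0)=0$, i.e.\ it is of type $(\hat{\rm g})$ with $\rho_0$ in place of $0$, while $(\hat{\rm D})$ holds on $(\rho_0,\orho)$; hence Theorem \ref{t:meno} (and its obvious counterpart for semi-wavefronts to $\rho_0$) supplies, for every $c$, semi-wavefronts from and to $\rho_0$ on this interval, again unique up to shifts. In both sub-problems $D$ is positive at the interior equilibrium $\rho_0$, so, as in the framework of \cite{Corli-Malaguti}, all these profiles are classical. This already yields $\phi_3$ (any semi-wavefront of the first sub-problem) and $\phi_4$ (any semi-wavefront of the second one); they are strict because each attains a non-equilibrium value ($0$, resp.\ $\orho$) at a finite abscissa where it cannot be prolonged.

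Next I would bring in assumption \eqref{eq:goodg}. Applying to each sub-problem the characterization of non-strictly monotone profiles — the version of \cite{Corli-Malaguti} on $[0,\rho_0]$ and the $(\hat{\rm g})$-analogue of Theorem \ref{t:strictly}(ii) on $[\rho_0,\orho]$, with $\rho_0$ playing the role of the boundary equilibrium — the lower bound $|g(\rho)|\ge L|\rho_0-\rho|^\alpha$ with $\alpha\in(0,1)$ forces every semi-wavefront from or to $\rho_0$ to attain the value $\rho_0$ at a finite abscissa $\oxi$ and to be constantly equal to $\rho_0$ on the half-line beyond $\oxi$. Since $D(\rho_0)>0$, a short computation (based again on \eqref{eq:goodg}, which controls the asymptotics of the profile near $\rho_0$) shows that such a profile reaches $\rho_0$ with vanishing slope and with $D(\phi)\phi'$ of class $C^1$ at $\oxi$, so that it is classical through $\oxi$ and \eqref{e:tws} holds there, all three terms vanishing at $\rho_0$. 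This makes the pasting legitimate: for $\phi_1$ take the increasing semi-wavefront $\psi_1$ to $\rho_0$ of the first sub-problem and the increasing semi-wavefront $\psi_2$ from $\rho_0$ of the second one; after a shift so that the two constant-$\rho_0$ portions share the same endpoint $\oxi$, set $\phi_1=\psi_1$ to the left of $\oxi$ and $\phi_1=\psi_2$ to its right. Since $\psi_1$ is strictly increasing below $\rho_0$ and $\psi_2$ strictly increasing above it, $\phi_1$ is strictly increasing, attains every value of $[0,\orho]$, is $C^1$ at the junction with $\phi_1'(\oxi)=0$, and solves \eqref{e:tws} everywhere; hence it is a classical, strictly monotone, strict traveling wave. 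The profile $\phi_2$ is built symmetrically, pasting the decreasing semi-wavefront to $\rho_0$ of the second sub-problem to the decreasing semi-wavefront from $\rho_0$ of the first one.

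For uniqueness, let $\phi$ be any classical or sharp strictly monotone traveling wave of \eqref{e:E} with speed $c$. As $\rho_0$ is the only zero of $g$ in $(0,\orho)$ while $g(0),g(\orho)\neq 0$, strict monotonicity forces $\phi$ to attain $\rho_0$ at most once. If it never does, its range lies in $[0,\rho_0]$ or in $[\rho_0,\orho]$ and, up to a shift, $\phi$ is one of the semi-wavefronts of the corresponding sub-problem, hence a shift of $\phi_3$ or of $\phi_4$. If $\phi(\oxi)=\rho_0$, then prolonging $\phi$ by the constant $\rho_0$ on the appropriate side exhibits the restriction of $\phi$ to $\{\phi<\rho_0\}$ as a semi-wavefront of the first sub-problem and the restriction to $\{\phi>\rho_0\}$ as a semi-wavefront of the second one; by the uniqueness of these semi-wavefronts, the behavior of $\phi$ on both sides of $\oxi$ is determined — in particular $\phi'(\oxi)=0$, which excludes a transversal crossing of $\rho_0$ — so $\phi$ agrees up to a shift with $\phi_1$ or $\phi_2$ according to its direction of monotonicity.

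The main obstacle is the junction analysis at $\rho_0$: one has to prove that, under \eqref{eq:goodg}, the two semi-wavefronts being pasted do reach $\rho_0$ at a finite abscissa, with vanishing slope and with $D(\phi)\phi'$ of class $C^1$ there, so that the pasted function is a genuine classical solution of \eqref{e:tws} rather than merely a weak traveling wave with a corner or a sharp profile. This hinges on the precise asymptotics of the semi-wavefront profiles as they approach $\rho_0$, and it is exactly here that the restriction $\alpha\in(0,1)$ in \eqref{eq:goodg} — equivalently, that $g$ vanishes at $\rho_0$ slower than linearly — is indispensable: if $g$ vanished at $\rho_0$ at order $\ge 1$, the profiles would approach $\rho_0$ only asymptotically and no traveling wave crossing $\rho_0$ could be produced by pasting.
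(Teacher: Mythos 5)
Your proposal follows essentially the same route as the paper: split $[0,\orho]$ at $\rho_0$, obtain classical semi-wavefronts from/to $\rho_0$ on each sub-interval (the paper treats $[\rho_0,\orho]$ via the reflection $\psi=\orho-\phi$ and \cite[Theorem 2.7]{Corli-Malaguti}, which is exactly the mechanism behind Theorem \ref{t:meno} that you invoke), use \eqref{eq:goodg} together with the Theorem \ref{t:strictly}\emph{(ii)}-type criterion to force each profile to reach $\rho_0$ at a finite $\oxi$ with zero slope, and then paste at $\oxi$. The only cosmetic difference is that the paper's $\phi_3$ and $\phi_4$ are themselves pastings of two semi-wavefronts (non-monotone pulses touching $\rho_0$), whereas you take single semi-wavefronts; both choices satisfy the statement as written.
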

 
We refer to Section \ref{s:gposneg} for a pictorial interpretation of this result. 


\section{The first-order problem}\label{s:first solution}
\setcounter{equation}{0}

For brevity, in the following we simply refer to $\rho$ as a semi-wavefront and to $\phi$ as its profile. Moreover, we mainly consider the case of semi-wavefronts {\em from} $\orho$; the case of semi-wavefront solutions {\em to} $\orho$ is analogous.

This section is devoted to the singular first-order boundary value problem
\begin{equation}\label{e:fo}
\left\{
\begin{array}{l}
\dot z(\varphi) = h(\varphi)-c-\frac{D(\varphi)g(\varphi)}{z(\varphi)}, \\
z(\varphi)<0,  \quad \varphi \in (0,\overline \rho),\\
z(0\,^+)=:z_0 \le 0, \quad z(\overline{\rho}\,^-)=0.
\end{array}
\right.
\end{equation}
We used the notation $z(0^+)$ and $z(\orho^-)$ because the equation in $\eqref{e:fo}$ is singular: its right-hand side is not defined at $\orho$ and possibly it is defined neither at $0$; then the values of $z$ at these points must be understood in the sense of the limit. As a consequence, solutions $z$ to \eqref{e:fo} are meant in the sense $z\in C^0[0,\overline{\rho}]\cap C^1(0,\overline{\rho})$. We point out that the differentiability of $D$ plays no role in the solvability of \eqref{e:fo}.

\begin{lemma}\label{l:Ex-sing2017}
Assume  {\rm (g)} and let $D \in C[0, \orho]$ be such that $D(\rho)>0, \, \rho \in (0, \orho)$. Assume one of the following conditions:
\begin{enumerate}[(i)]
\item $D(0)>0$;

\item $D(0)=0$ and $\displaystyle{\limsup_{\phi \to 0^+}} \frac{D(\phi)}{\phi}<\infty$;

\item $D(0)=0$ and $\dot D(0)=\infty$.
\end{enumerate}
Then, problem \eqref{e:fo} is uniquely solvable for every $c\in\R$. Moreover,
there exists a real number $c^*$ such that
\[
z(0^+)= \left\{
\begin{array}{ll}
0 & \hbox{ in case (ii)  when } c\ge c^*,
\\
z_0<0 & \hbox{ otherwise}.
\end{array}
\right.
\]
\end{lemma}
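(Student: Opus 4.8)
The plan is to solve \eqref{e:fo} by a backward shooting argument from $\orho$, following the strategy exploited in \cite{Corli-Malaguti}. The main point is that the equation in \eqref{e:fo} is singular at $\orho$ because we prescribe $z(\orho^-)=0$ and the term $D(\varphi)g(\varphi)/z(\varphi)$ then has an indeterminate form; thus one cannot simply invoke standard ODE theory. First I would rewrite, near $\orho$, the problem as a regular Cauchy problem. Since by (g) and the hypothesis $g(\orho)=0$ with $g>0$ on $[0,\orho)$, and $D(\orho)=0$ with $D>0$ on $(0,\orho)$, the product $D(\varphi)g(\varphi)$ vanishes at $\orho$; writing $w(\varphi):=D(\varphi)g(\varphi)$ one looks for a solution with $z(\varphi)\sim \lambda(\varphi-\orho)$ as $\varphi\to\orho^-$ for a suitable negative constant $\lambda$. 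Plugging this ansatz into the equation and matching leading orders gives an algebraic equation for $\lambda$, which has a solution with $\lambda<0$ precisely because $w(\varphi)/(\varphi-\orho)\to 0$ or a finite nonnegative limit in the relevant regimes; a contraction-mapping (or monotone-iteration) argument on a small interval $[\orho-\delta,\orho)$ then yields a unique local solution $z\in C^0[\orho-\delta,\orho]\cap C^1(\orho-\delta,\orho)$ with $z<0$ there and $z(\orho^-)=0$.

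Next I would continue this local solution leftwards. As long as $z(\varphi)<0$ the equation is regular (its right-hand side is Lipschitz in $z$ on any set where $z$ is bounded away from $0$), so the solution extends uniquely on a maximal subinterval $(\sigma,\orho)$ of $(0,\orho)$. The key a priori bound is that $z$ stays negative: one shows that $z$ cannot reach $0$ at any interior point $\varphi_1\in(0,\orho)$, because near such a point $-D(\varphi)g(\varphi)/z(\varphi)\to+\infty$, forcing $\dot z>0$ there, which is incompatible with $z$ increasing to $0$ from below — a standard barrier/comparison argument, already used in \cite{Corli-Malaguti}, rules this out. Hence either $\sigma=0$, or the solution blows up; a further comparison with the linear equation $\dot \zeta = h(\varphi)-c$ (an upper bound, since $-D g/z>0$) and with a suitable lower barrier shows $z$ stays bounded, so in fact $\sigma=0$ and $z$ extends continuously to $0$, with $z(0^+)=:z_0\le 0$ well defined (possibly $-\infty$ a priori, but the bounds exclude this). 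Uniqueness for every $c$ follows from the uniqueness of the local solution at $\orho$ together with the backward uniqueness of the regular equation on $(0,\orho)$.

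For the behavior at $0$ I would distinguish the three cases. In cases (i) and (iii), $z_0<0$ always: in case (i), $D(0)>0$ and $g(0)>0$, so $D(0)g(0)>0$ and, if $z$ tended to $0$, the term $-D(\varphi)g(\varphi)/z(\varphi)$ would again diverge to $+\infty$, contradicting $z\le 0$ near $0$; in case (iii) the condition $\dot D(0)=\infty$ makes $D(\varphi)/\varphi\to\infty$, which again forces $z_0<0$ by a similar argument, or by monotonicity of $z$ in $c$. The delicate case is (ii), $D(0)=0$ with $\limsup_{\varphi\to0^+}D(\varphi)/\varphi<\infty$: here one introduces $c^*$ as the separating value. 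The construction is by monotone dependence on $c$: the solution $z=z(\varphi;c)$ is strictly monotone in $c$ (differentiate the equation in $c$, or use comparison), so the set of $c$ for which $z(0^+)=0$ is a half-line $[c^*,\infty)$ (or empty, or all of $\R$); one then shows this set is nonempty and bounded below by producing explicit super- and sub-solutions of the form $z=\mu\varphi$ near $0$ — the linear growth hypothesis on $D$ is exactly what makes $D(\varphi)g(\varphi)/\varphi$ bounded, so that $\dot z = h(\varphi)-c - D g/z$ is consistent with $z\sim\mu\varphi$ for some $\mu<0$ when $c$ is large, and forces $z_0<0$ when $c$ is very negative. Setting $c^*:=\inf\{c: z(0^+;c)=0\}$ and checking, via continuous dependence, that the infimum is attained gives the stated dichotomy.

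\textbf{Main obstacle.} The technical heart of the argument is the singular analysis at the endpoint $\orho$: establishing existence and \emph{uniqueness} of a local solution of \eqref{e:fo} with $z(\orho^-)=0$ and $z<0$, together with the correct asymptotics, when $D$ and $g$ vanish simultaneously at $\orho$ and no growth assumption on $D$ near $\orho$ is made. Once this local solution is in hand, the global continuation, the sign bound $z<0$, and the characterization of $z(0^+)$ via monotonicity in $c$ are comparatively routine comparison arguments, essentially as in \cite{Corli-Malaguti}; the case analysis at $0$ (i)--(iii) is the mild secondary difficulty, with case (ii) requiring the explicit linear barriers described above.
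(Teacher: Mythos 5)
First, note that the paper does not actually prove this lemma: its ``proof'' is a pointer to \cite[Theorems 2.6 and 2.10]{Corli-Malaguti} and to \cite[Lemma 2.2 and Theorem 4.1]{Malaguti-Marcelli_2002}, so any self-contained argument is necessarily a different route. Your plan for the continuation on $(0,\orho)$, the preservation of the sign $z<0$, the monotone dependence on $c$, and the trichotomy at $\varphi=0$ (including the role of the bound on $D(\varphi)/\varphi$ in case (ii) and the definition of $c^*$ as an infimum attained by continuous dependence) is sound and consistent with how those references proceed.

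The genuine gap is exactly at the point you call the technical heart: the local construction at $\orho$. Your proposed mechanism --- a linear ansatz $z\sim\lambda(\varphi-\orho)$ whose slope is fixed by matching leading orders, followed by a contraction on $[\orho-\delta,\orho)$ --- fails in the case that matters most in this paper. When $\ell:=\lim_{\varphi\to\orho^-}D(\varphi)g(\varphi)/(\varphi-\orho)=0$ (which happens in particular under (D)) and $c\ge h(\orho)$, Proposition \ref{p:slopePDnew} shows the solution must satisfy $\dot z(\orho)=0$: it is \emph{tangent} to zero, the matching equation $\lambda^2-(h(\orho)-c)\lambda+\ell=0$ selects $\lambda=0$, and there is no linear leading-order profile to contract around. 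Moreover the right-hand side of \eqref{e:zeq} is not Lipschitz in $z$ near $z=0$, so no standard fixed-point argument applies at the singular endpoint; this is precisely why the cited proofs build the solution by upper- and lower-solutions (barriers of the type $-\eps D(\varphi)$ or $-k(\orho-\varphi)^\beta$, as used elsewhere in this paper) together with a limiting/comparison procedure, rather than by a contraction at $\orho$. Two smaller points: the sign in your ansatz is off (for $z<0$ and $\varphi<\orho$ one needs $\lambda>0$ in $z\sim\lambda(\varphi-\orho)$), and the lemma does \emph{not} assume $D(\orho)=0$ --- the product $D(\varphi)g(\varphi)$ vanishes at $\orho$ only because $g(\orho)=0$, so your local analysis must also cover the regular case $D(\orho)>0$, $\ell<0$, where the matching does produce a unique positive root and your scheme is closer to working.
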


\begin{proof} The proof of this result already appeared in previous papers: case \emph{(i)} and case \emph{(ii)} with $c<c^*$ are treated in \cite[Theorem 2.6]{Corli-Malaguti}, case  \emph{(iii)} is discussed in \cite[Theorem 2.10]{Corli-Malaguti}, case \emph{(ii)} with $c\ge c^*$ can be obtained by \cite[Lemma 2.2 and Theorem 4.1]{Malaguti-Marcelli_2002} when assuming in \cite{Malaguti-Marcelli_2002} $d=1$ and the source term equal to $Dg$. Notice that in \cite{Malaguti-Marcelli_2002} $D(0)>0$ and $g(0)=0$;  here the assumptions on $D$ and $g$ in $\rho=0$ are exchanged, but this does not affect the conclusion.
\end{proof}

Now, we show that the solution $z$ provided by Lemma \ref{l:Ex-sing2017} is differentiable at $\orho$ and that $\dot z(\orho)$ can be explicitly computed. Notice that by \eqref{e:zeq} we have
\begin{equation}\label{e:zmod}
\left(\dot z(\varphi) -h(\varphi) +c  \right)\frac{z(\varphi)}{\varphi-\orho} =-\frac{D(\varphi)}{\varphi-\orho}\,g(\varphi).
\end{equation}
Hence, it is clear that the value of $\dot z(\orho)$ depends on the behavior of the right-hand side in \eqref{e:zmod} near the point $\orho$. This accounts for the following statement.

\begin{proposition}\label{p:slopePDnew} Under the same hypotheses of Lemma \ref{l:Ex-sing2017}, assume moreover that the limit of the right-hand side of \eqref{e:zmod} exists and denote
\[
\lim_{\phi\to\orho^-}\frac{D(\varphi)}{\varphi-\orho}\,g(\varphi) = \ell\in(-\infty,0].
\]
Then, the solution $z$ of problem \eqref{e:fo} satisfies
%
\begin{equation}\label{e:z-slope}
\dot z(\orho) = \left\{
\begin{array}{ll}
\left\{
\begin{array}{ll}
0 & \hbox{ if } c\ge h(\orho),
\\
h(\orho)-c & \hbox{ if } c<h(\orho),
\end{array}
\right.
& \hbox{ if } \ell=0,
\\[2mm]
\ds\frac{h(\orho)-c+\sqrt{(h(\orho)-c)^2-4\ell}}{2} & \hbox{ if } -\infty<\ell<0,
\end{array}
\right.
\end{equation}

\end{proposition}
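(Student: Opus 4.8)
The plan is to analyze the ODE $\dot z(\varphi) = h(\varphi)-c-\frac{D(\varphi)g(\varphi)}{z(\varphi)}$ near $\varphi=\orho$, knowing from Lemma \ref{l:Ex-sing2017} that a solution $z\in C^0[0,\orho]\cap C^1(0,\orho)$ with $z<0$ on $(0,\orho)$ and $z(\orho^-)=0$ exists and is unique. The key relation is \eqref{e:zmod}, which I rewrite as
\[
\left(\dot z(\varphi)-h(\varphi)+c\right)\frac{z(\varphi)}{\varphi-\orho}=-\frac{D(\varphi)}{\varphi-\orho}g(\varphi)\longrightarrow \ell.
\]
The first idea is to set $w(\varphi):=\dfrac{z(\varphi)}{\varphi-\orho}$. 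Since $z(\orho^-)=0$, $w$ is (up to sign) the incremental ratio of $z$ at $\orho$, so $\liminf$ and $\limsup$ of $w$ at $\orho^-$ bracket $\dot z(\orho^-)$; conversely, if $\dot z(\orho^-)$ exists then $w(\orho^-)=\dot z(\orho^-)$ by l'Hôpital. Hence the whole problem reduces to showing the limit $\lambda:=\lim_{\varphi\to\orho^-}w(\varphi)$ exists and identifying it. Plugging $z=(\varphi-\orho)w$ into the ODE and using $\dot z=w+(\varphi-\orho)\dot w$ gives, after multiplying by $w$,
\[
w^2+(\varphi-\orho)\dot w\, w-\left(h(\varphi)-c\right)w=-\frac{D(\varphi)g(\varphi)}{\varphi-\orho}.
\]
Any finite limit point $\lambda$ of $w$ must therefore satisfy $\lambda^2-(h(\orho)-c)\lambda=-\ell$, i.e. $\lambda^2-(h(\orho)-c)\lambda+\ell=0$, provided one can argue the term $(\varphi-\orho)\dot w\,w$ does not contribute in the limit.

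The roots of $\lambda^2-(h(\orho)-c)\lambda+\ell=0$ are $\lambda_\pm=\frac{(h(\orho)-c)\pm\sqrt{(h(\orho)-c)^2-4\ell}}{2}$; since $\ell\le 0$ the discriminant is nonnegative and $\lambda_-\le 0\le\lambda_+$. Next I must select the correct root and handle the $(\varphi-\orho)\dot w\, w$ term simultaneously. The natural approach is a sandwich/invariant-region argument for $w$ near $\orho^-$: fix small $\eta>0$ and show that for $\varphi$ close enough to $\orho$ the quantity $w(\varphi)$ cannot escape a small neighborhood of the appropriate root, by checking the sign of $\dot w$ on the boundary of that region (a barrier argument on the scalar ODE for $w$ obtained above, solved for $\dot w$). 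When $-\infty<\ell<0$ one has $\lambda_+>0$ strictly; since $z<0$ and $\varphi-\orho<0$ we get $w>0$ on $(0,\orho)$, which rules out $\lambda_-\le 0$ and forces $w\to\lambda_+$, giving the stated formula. When $\ell=0$ the roots are $0$ and $h(\orho)-c$: if $c<h(\orho)$ then $h(\orho)-c>0$ is the only admissible (positive) root, so $\dot z(\orho)=h(\orho)-c$; if $c\ge h(\orho)$ the admissible root is $0$ (the other root $h(\orho)-c\le 0$ is incompatible with $w>0$, except the degenerate case $c=h(\orho)$ where both roots coincide at $0$), giving $\dot z(\orho)=0$. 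In each case, once $w(\varphi)$ is trapped near a fixed root, the factor $(\varphi-\orho)\to 0$ kills the $(\varphi-\orho)\dot w\,w$ contribution — more carefully, one shows $w$ has bounded variation / is monotone near $\orho$ from the barrier argument, so $(\varphi-\orho)\dot w\to 0$ along the relevant sequence, closing the limit identification rigorously.

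The main obstacle I anticipate is precisely controlling the term $(\varphi-\orho)\dot w(\varphi)\,w(\varphi)$: a priori $\dot w$ could blow up faster than $(\varphi-\orho)^{-1}$. The way around this is to not treat it as an error term at all, but to run the barrier argument directly on the autonomous-in-spirit ODE for $w$: rewrite it as $(\orho-\varphi)\dot w = \Phi(\varphi,w)$ with $\Phi(\varphi,w)=w^2-(h(\varphi)-c)w+\frac{D(\varphi)g(\varphi)}{\varphi-\orho}\cdot\frac{1}{1}$ divided appropriately, observe that $\Phi(\orho,\lambda_\pm)=0$, and that near a simple root $\lambda_+$ the sign of $\Phi$ makes the region $\{|w-\lambda_+|<\eta\}$ forward-invariant as $\varphi\to\orho^-$ (a standard stable/unstable node analysis for the singular point of this first-order equation). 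This yields $w(\varphi)\to\lambda_+$ directly, hence $\dot z(\orho^-)=\lim w(\varphi)=\lambda_+$ by l'Hôpital, without ever estimating $\dot w$ in isolation. The degenerate subcase $c=h(\orho)$, $\ell=0$ (double root at $0$) is the only genuinely delicate one and must be argued separately: there $\Phi$ has a double zero, the convergence $w\to 0$ is not exponential, and one falls back on the direct estimate $0<w(\varphi)=\frac{z(\varphi)}{\varphi-\orho}$ together with an upper bound $w(\varphi)\le o(1)$ obtained from integrating $\dot z\le h(\varphi)-c+$ (a quantity $\to 0$), consistent with $\dot z(\orho)=0$ in the formula.
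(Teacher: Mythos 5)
Your reduction to the singular equation for $w(\varphi)=z(\varphi)/(\varphi-\orho)$ and the identification of the possible limits as roots of $\lambda^2-(h(\orho)-c)\lambda+\ell=0$ is a viable route, genuinely different from the paper's (which works with the lower/upper Dini derivatives $D_\mp z(\orho)$ and extracts sequences $\alpha_n,\beta_n$ on which $\frac{d}{d\varphi}\bigl(z/(\varphi-\orho)\bigr)$ has a definite sign, so that the troublesome term contributes with a known sign and yields the two opposite inequalities $\lambda^2-(h(\orho)-c)\lambda+\ell\gtrless0$). However, the mechanism you invoke to close the argument is backwards. Writing $\frac{dw}{d\varphi}=\frac{P(\varphi,w)}{w\,(\orho-\varphi)}$ with $P(\varphi,w)=w^2-(h(\varphi)-c)w+\frac{D(\varphi)g(\varphi)}{\varphi-\orho}$, one sees that for $w>\lambda_+$ we have $P>0$, hence $w$ \emph{increases} as $\varphi\to\orho^-$, and for $0<w<\lambda_+$ we have $P<0$, hence $w$ \emph{decreases}: the root $\lambda_+$ is a repeller in the direction $\varphi\to\orho^-$, and the region $\{|w-\lambda_+|<\eta\}$ is \emph{not} forward-invariant. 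The correct version of your argument must run the other way: any excursion of $w$ outside $[\lambda_+-\eta,\lambda_++\eta]$ drives $w$ monotonically to $+\infty$ or down to $0$ (the factor $(\orho-\varphi)^{-1}$ is non-integrable), and each alternative must then be refuted from the equation for $z$ itself — e.g.\ $w\to+\infty$ forces $D g/z\to0$, hence $\dot z\to h(\orho)-c$ finite and thus $w$ bounded; $w\to0^+$ with $\ell<0$ forces $\dot z\to+\infty$ and thus $w\to+\infty$. Without these contradictions the "barrier'' step as you state it simply fails.

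The second concrete gap is the root selection when $\ell=0$ and $c<h(\orho)$: the roots are $0$ and $h(\orho)-c>0$, and the constraint $w>0$ does \emph{not} exclude the limit point $0$ (a positive function may tend to $0$). To rule it out you need the observation, used in the paper, that $\dot z(\varphi)=h(\varphi)-c-\frac{D(\varphi)g(\varphi)}{z(\varphi)}>h(\varphi)-c$ because $D,g>0$ and $z<0$; hence $\liminf_{\varphi\to\orho^-}\dot z\ge h(\orho)-c>0$, which via the Mean Value Theorem gives $w(\varphi)\ge\sigma>0$ near $\orho$ and eliminates the root $0$. Your parenthetical "the only admissible (positive) root'' glosses over exactly this point. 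With these two repairs (reversing the trapping logic, and adding the lower bound $\dot z>h-c$), your plan can be completed, but as written the two central steps would not go through.
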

\begin{proof}
We consider separately two cases.

\smallskip
{\em (i)} $\ell=0$. This is the case, in particular, when (D) is satisfied; most of the following proof already appeared in \cite[Lemma 2.1]{MMconv}. We denote the lower and upper left Dini-derivatives of $z$ at $\orho$ by
    \begin{equation*}
  D_-z(\orho)=:\liminf_{\phi \to \orho\,^-}\frac{z(\phi)}{\phi-\orho}, \qquad \limsup_{\phi \to \orho\,^-}\frac{z(\phi)}{\phi-\orho}:=D^-z(\orho).
    \end{equation*}
By $\eqref{e:fo}_2$ we have $D_-z(\orho)\ge 0$.

If $D_-z(\orho)>0$, then $\frac{z(\phi)}{\phi-\orho} \ge \delta>0$ in some left neighborhood of $\orho$. By \eqref{e:zmod} and $\ell=0$ we deduce
\begin{equation*}
\lim_{\phi \to \orho\,^-}\dot z(\phi)=
h(\orho)-c,
\end{equation*}
and this leads to the existence of $\dot z(\orho)=h(\orho)-c$.

If $D_-z(\orho)=0$, to prove that $\dot z(\orho)$ exists we argue by contradiction and then assume $D^-z(\orho)>0$. As a consequence, for every $\lambda \in
\left(0, D^-z(\orho)\right)$ we can find two sequences $\{\alpha_n\}$ and $\{\beta_n\}$ in $(0,\orho)$, both of them converging to $\orho$, such that
\begin{equation}\label{e:alphabeta}
\begin{array}{ccc}
\ds\frac{z(\alpha_n)}{\alpha_n-\orho}=\lambda &\mbox{ and } &
\ds\frac{\dot z(\alpha_n) -
\lambda}{\alpha_n-\orho} = \frac{d}{d\varphi}\left(
\frac{z(\varphi)}{\varphi -\orho} \right )_{| \varphi=\alpha_n}\ge 0;\\
\\
\ds\frac{z(\beta_n)}{\beta_n-\orho}=\lambda &\mbox{ and } &
\ds\frac{\dot z(\beta_n) -
\lambda}{\beta_n-\orho} = \frac{d}{d\varphi}\left(
\frac{z(\varphi)}{\varphi -\orho} \right )_{| \varphi=\beta_n}\le 0.
\end{array}
\end{equation}
By $\eqref{e:alphabeta}_2$ we have $\dot z(\beta_n) \ge
\frac{z(\beta_n)}{\beta_n - \orho}=\lambda$ and then
\begin{equation*}
\left(h(\beta_n) -c  -\dot z(\beta_n) \right)\frac{z(\beta_n)}{\beta_n-\orho}\le -\lambda^2-\lambda\left(c-h(\beta_n)\right).
\end{equation*}
If $c \ge h(\orho)$ we have $\lim_{n\to\infty}-\lambda^2-\lambda\left(c-h(\beta_n)\right)= -\lambda^2-\lambda(c-h(\orho))\le -\lambda^2$. This contradicts \eqref{e:zmod} because of $\ell=0$.

If $c <h(\orho)$, then we can choose $\lambda<h(\orho)-c$ and get, by \eqref{e:zmod},
$$
\dot
z(\alpha_n)=h(\alpha_n)-c-\frac{\frac{D(\alpha_n)g(\alpha_n)}{\alpha_n-\orho}}{\lambda}
\to h(\orho)-c>\lambda,
$$
when $n \to \infty$. But by $\eqref{e:alphabeta}_1$ we have $\dot z(\alpha_n)\le \lambda$, a contradiction.

Therefore, up to now we proved that $z$ is differentiable at $\orho$. The assumption $\ell=0$ together with \eqref{e:zmod} imply
\begin{equation}\label{e:either-or}
\hbox{ either\quad  $\dot z(\orho)=0$\quad or\quad $\dot z(\orho)=h(\orho) -c$.}
\end{equation}
Now, we prove $\eqref{e:z-slope}_1$. If $c=h(\orho)$, then we have $\dot z(\orho)=0$. If $c>h(\orho)$, then $\dot z(\orho)=h(\orho)-c$ should imply $z(\phi)>0$ in a left neighborhood of $\orho$, a contradiction; then, $\dot z(\orho)=0$. If $c< h(\orho)$, by the positivity of both $D$ and $g$ it follows that every solution $z$ of problem \eqref{e:fo} satisfies
$$
\dot z(\varphi)=h(\varphi)-c-\frac{D(\varphi)g(\varphi)}{z(\varphi)}>h(\varphi)-c, \qquad \varphi\in(0,\orho),
$$
and then $\ds\liminf_{\varphi \to \orho^-}\dot z(\varphi)\ge h(\orho)-c$. Since $c<h(\orho)$, we can find $\eta>0$ such that
$$
\dot z(\varphi)\ge h(\orho)-c- \frac{h(\orho)-c}{2}=\frac{h(\orho)-c}{2}=:\sigma>0, \qquad \varphi \in (\orho-\eta, \orho).
$$
As a consequence, by the Mean Value Theorem we have
$$
-z(\varphi)=-z(\varphi) +z(\orho)=\dot z(\xi)(\orho-\varphi)>\sigma (\orho - \varphi), \qquad \varphi \in (\orho-\eta, \orho),
$$
with $\xi \in (\varphi , \orho)$. Therefore we deduce $z(\varphi)<\sigma(\varphi-\orho) $ for $\varphi \in (\orho-\eta, \orho)$ and, in turn, $\dot z(\orho)\ge \sigma$. By \eqref{e:either-or} we conclude that $\dot z(\orho)=h(\orho)-c$.

\smallskip
{\em (ii)}  $-\infty<\ell<0$. We argue again by contradiction. If $\dot z(\orho)$ does not exist, then $0\le D_-z(\orho)<D^-z(\orho)\le \infty$ and we can find sequences $\{\alpha_n\}$, $\{\beta_n\}$ as in \eqref{e:alphabeta} for any $\lambda \in \left(D_-z(\orho), D^-z(\orho)\right)$. By \eqref{e:zmod} we have
    \begin{equation*}
    \lambda\ge \dot z(\alpha_n)=h(\alpha_n)-c-\frac{\frac{D(\alpha_n)g(\alpha_n)}
    {\alpha_n-\orho}
    }{\lambda}\to h(\orho)-c-\frac{\ell}{\lambda}, \qquad\text{ as }n\to\infty,
    \end{equation*}
and then $\lambda^2 -\left(h(\orho)-c\right)\lambda+\ell\ge 0$. Similarly, by means of $\{\beta_n \}$, we obtain that $\lambda^2 -(h(\orho)-c)\lambda+\ell\le 0$. The two last inequalities and the sign condition of $\lambda$ imply
    \begin{equation*}
    \lambda=\frac{h(\orho)-c+\sqrt{(h(\orho)-c)^2-4\ell}}{2}.
    \end{equation*}
This contradicts the arbitrariness of $\lambda$. Hence $\dot z(\orho)$ exists and we denote $\mu :=\dot z(\orho)\in[0,\infty]$. By the assumption $-\infty<\ell<0$, from \eqref{e:zmod} we obtain that $\dot z(\phi)$ has a limit for $\phi \to \orho\,^-$ and it is necessarily $\mu$. Again by \eqref{e:zmod}, by passing to the limit for $\phi\to\orho^-$ we deduce $\mu^2-\left(h(\orho)-c\right)\mu+\ell=0$ and, since $\mu \ge 0$, this implies $\eqref{e:z-slope}_2$.
\end{proof}

We notice that $\dot z$ is continuous at $\orho$ if $\dot z(\orho)\ne 0$; this smoothness is not granted in general.

\section{Existence of semi-wavefront solutions}\label{s:nature}
\setcounter{equation}{0}

We first prove in this section that, even if {\em sharp} profiles lose regularity at the point $\oxi$ where they reach the value $\orho$, nevertheless some smoothness still holds, see \eqref{e:Dphi'vanishing}. Indeed,
that property is also satisfied by every non-strictly monotone {\em classical} profile. In the second part of the section we prove Theorems \ref{t:semi} and \ref{t:meno}.

\begin{proposition}\label{l:sharp}
Assume {\rm (D)} and {\rm (g)}. If $\varphi$ is a classical or sharp semi-wavefront profile from $\orho$, then
\begin{equation}\label{e:Dphi'vanishing}
\lim_{\xi \to \oxi}D\left(\phi(\xi)\right)\phi^{\prime}(\xi)=0,
\end{equation}
where $\oxi$ is defined in \eqref{e:xi0-sharp}. An analogous result holds in the case of semi-wavefront profiles to $\orho$.
\end{proposition}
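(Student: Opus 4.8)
The statement concerns a classical or sharp semi-wavefront profile $\varphi$ from $\orho$ and asks to show that $D(\phi(\xi))\phi'(\xi)\to 0$ as $\xi\to\oxi$, where $\oxi=\inf\{\xi<\varpi:\varphi(\xi)<\orho\}$. The natural approach is to pass to the first-order formulation of Section~\ref{s:first solution}: wherever $\phi$ is strictly monotone with $0\le\phi<\orho$, the function $z(\phi):=D(\phi)\phi'(\xi(\phi))$ solves \eqref{e:fo}, and the quantity whose limit we want is precisely $\lim_{\phi\to\orho^-}z(\phi)$. By the boundary condition $z(\orho^-)=0$ in \eqref{e:fo}, this limit is $0$. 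So the real content is bookkeeping: checking that the profile is genuinely strictly monotone in a left neighborhood of $\oxi$ (so that the reduction to $z$ is legitimate there), handling the two cases $\oxi\in\R$ and $\oxi=-\infty$, and translating the limit in the $\phi$ variable back to a limit in the $\xi$ variable.

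First I would treat the case $\oxi=-\infty$. Then $\phi(\xi)<\orho$ for all $\xi<\varpi$ and $\phi(\xi)\to\orho$ as $\xi\to-\infty$; since $\phi$ is monotone (decreasing, say, for a profile from $\orho$) it is strictly monotone on $(-\infty,\varpi)$ — indeed a classical profile that is constant on an interval would have $\phi'\equiv0$ there, and plugging into \eqref{e:tws} gives $g\equiv0$ on the corresponding $\phi$-values, contradicting (g) unless the value is $\orho$, which is excluded for $\xi>\oxi=-\infty$. Hence $z$ is defined on all of $(0,\orho)$, solves \eqref{e:fo}, and $z(\orho^-)=0$; since $\phi(\xi)\to\orho$ as $\xi\to\oxi=-\infty$, we get $D(\phi(\xi))\phi'(\xi)=z(\phi(\xi))\to z(\orho^-)=0$.

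Next, the case $\oxi\in\R$. By the discussion following \eqref{e:xi0-sharp}, if $\phi$ is sharp then $\oxi=\xi_0$ and $\phi\equiv\orho$ on $(-\infty,\oxi]$, with $\phi$ classical and strictly monotone on $(\oxi,\varpi)$ after possibly shrinking; if $\phi$ is classical with $\oxi\in\R$ then $\phi'(\oxi)=0$ and again $\phi$ is strictly monotone just to the right of $\oxi$ (same argument as above: a classical profile cannot be locally constant at a value where $g\ne0$). In either situation $z$ is well defined on a left neighborhood of $\orho$, satisfies \eqref{e:fo} there, and $z(\orho^-)=0$. As $\xi\to\oxi^+$ we have $\phi(\xi)\to\orho^-$, hence $D(\phi(\xi))\phi'(\xi)=z(\phi(\xi))\to0$; and for $\xi\le\oxi$, if any, $\phi$ is constant so $\phi'=0$ and $D(\phi)\phi'=0$ trivially. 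This gives \eqref{e:Dphi'vanishing}. The case of profiles to $\orho$ is symmetric, replacing $\inf$ by $\sup$ and reversing monotonicity.

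\textbf{Main obstacle.} The one delicate point is the rigorous justification that a classical (or the classical part of a sharp) profile cannot be constant on a nondegenerate subinterval of the region $0\le\phi<\orho$ — equivalently, that the reduction to the first-order equation \eqref{e:fo} is valid right up to $\oxi$. This is where assumption (g) (strict positivity of $g$ away from $\orho$) is essential: on any interval where $\phi\equiv\text{const}=:k<\orho$, equation \eqref{e:tws} forces $g(k)=0$, which is impossible. A secondary subtlety is that "$\phi$ classical on $I\setminus\{\oxi\}$" in the sharp case must be combined with Lemma~\ref{l:Ex-sing2017} (via one of its hypotheses on $D$ at $0$, which (D) or $(\tilde{\rm D})$ guarantee are not needed here since we only work near $\orho$) to know that the $z$ obtained is the unique solution of \eqref{e:fo}, so that the boundary value $z(\orho^-)=0$ genuinely applies to our profile. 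Once these structural facts are in place the limit is immediate from the boundary condition in \eqref{e:fo}.
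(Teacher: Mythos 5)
Your argument is circular. You define $z(\phi):=D(\phi)\phi'(\xi(\phi))$ from the given profile and then conclude by invoking ``the boundary condition $z(\orho^-)=0$ in \eqref{e:fo}''. But \eqref{e:fo} is a boundary value problem, and the condition $z(\orho^-)=0$ is part of its \emph{statement}, not something automatically inherited by the $z$ built from an arbitrary profile. The whole content of Proposition \ref{l:sharp} is precisely to show that the flux $D(\phi)\phi'$ of a (weak) traveling-wave solution vanishes at $\oxi$, i.e.\ that the profile's $z$ does satisfy $z(\orho^-)=0$; indeed the paper's proof of Theorem \ref{t:semi} uses Proposition \ref{l:sharp} exactly to verify that boundary condition before identifying the profile's $z$ with the solution of \eqref{e:fo}. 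Appealing to the uniqueness in Lemma \ref{l:Ex-sing2017} does not help, since that uniqueness concerns solutions of the full boundary value problem and can only be invoked after the boundary behavior at $\orho$ is established.

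The gap is not cosmetic: a priori the ODE $\dot z=h-c-Dg/z$ admits negative solutions on a left neighborhood of $\orho$ with $z(\orho^-)=\ell<0$ (near $\orho$ the term $Dg/z$ is harmless when $z$ stays away from $0$, since $Dg\to0$ under (D) and (g)). Such a $z$ would produce a profile reaching $\orho$ at a finite $\oxi$ with $\phi'(\oxi^+)=-\infty$ and $D(\phi)\phi'\to\ell\ne0$. What rules this out is the integral identity \eqref{e:def-tw} defining a traveling-wave solution: this is the case your proposal never genuinely addresses. The paper's proof handles it by taking a test function $\psi$ supported around $\oxi$ with $\psi(\oxi)\ne0$, splitting the integral at $\oxi$, integrating by parts on $(\oxi+\delta,\oxi+\eps)$ where \eqref{e:tws} holds, and letting $\delta\to0^+$; the surviving boundary term is $\lim_{\delta\to0^+}D(\phi(\oxi+\delta))\phi'(\oxi+\delta)\psi(\oxi+\delta)$, which must vanish. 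Your treatment of the easy cases (classical with $\oxi\in\R$, where $\phi'(\oxi)=0$; sharp with $\phi'(\oxi^+)$ finite, where $D(\orho)=0$ does the job) is fine, as is the observation that (g) forbids a classical profile from being locally constant at a value below $\orho$; but the proposal is missing the one argument that carries the weight of the proposition.
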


\begin{proof}
For simplicity, in the following we only consider the case of profiles from $\orho$. If $\varphi$ is a classical profile, then $\varphi^{\prime}(\xi) \to 0$ as $\xi \to -\infty$ \cite[Lemma 6.4]{Corli-Malaguti} and \eqref{e:Dphi'vanishing} is satisfied. If $\varphi$ is sharp, then $\overline \xi\in\R$ and
\begin{equation}\label{eq:def-phi}
\varphi(\xi) = \orho, \quad \mbox{for all } \xi \le \overline \xi.
\end{equation}
If $\varphi^{\, \prime}(\overline{\xi}^+)\in\R$, then \eqref{e:Dphi'vanishing} is satisfied again. Therefore, it remains to consider the case
\begin{equation}\label{e:danger}
\varphi^{\, \prime}(\overline{\xi}^+)=-\infty,
\end{equation}
see profile $\phi_3$ in Figure \ref{f:SWFs}. We fix $\eps>0$, denote $I_\eps=(\overline{\xi}-\eps, \overline{\xi}+\eps)\subset (-\infty,\varpi)$ and consider $\psi\in C_0^{\infty}(I_\eps)$. It follows from \eqref{e:def-tw} that
\begin{align}
0=&\int_{I_\eps} \left\{\left(D(\phi)\phi' - f(\phi) + c\phi \right)\psi' - g(\phi)\psi\right\}\,d\xi\notag
\\
=&
\left(\int_{\overline{\xi}-\eps}^{\overline{\xi}} + \int_{\overline{\xi}}^{\overline{\xi}+\eps} \right)\left\{\left(D(\phi)\phi' - f(\phi) + c\phi \right)\psi' - g(\phi)\psi\right\}\,d\xi.\label{eq:I-2}
\end{align}
About the first integral in \eqref{eq:I-2}, from {\rm (g)} and \eqref{eq:def-phi} we deduce
\begin{align}
&\int_{\overline{\xi}-\eps}^{\overline{\xi}}  \left\{\left(D(\phi)\phi' - f(\phi) + c\phi \right)\psi' - g(\phi)\psi\right\}
\nonumber
\\
=&
\left[c\orho - f(\orho) \right]\int_{\overline{\xi}-\eps}^{\overline{\xi}}\psi'\,d\xi= \left[c\orho - f(\orho) \right]\psi(\overline{\xi}).
\label{eq:I-3}
\end{align}
About the second one, we must be more careful because of \eqref{e:danger}. Then we fix $0<\delta <\eps$ and notice that
\begin{align*}
&\int_{\overline{\xi}+\delta}^{\overline{\xi}+\eps} \left\{\left(D(\phi)\phi' - f(\phi) + c\phi \right)\psi' - g(\phi)\psi\right\}\,d\xi
\\
&\qquad= -\left(D\left(\phi(\overline{\xi}+\delta)\right)\phi'(\overline{\xi}+\delta) - f\left(\phi(\overline {\xi}+\delta)\right) + c\phi(\overline{\xi}+\delta)\right)\psi(\overline{\xi}+\delta)
\\
&\qquad\quad-\int_{\overline{\xi}+\delta}^{\overline{\xi}+\eps}\left(\left(D(\varphi)\varphi^{\, \prime}\right)^{\, \prime} + \left(c-h(\varphi)\right)\varphi^{\, \prime}+g(\varphi)\right)\psi\,d\xi
\\
&\qquad= -\left(D\left(\phi(\overline{\xi}+\delta)\right)\phi'(\overline{\xi}+\delta) - f\left(\phi(\overline {\xi}+\delta)\right) + c\phi(\overline{\xi}+\delta)\right)\psi(\overline{\xi}+\delta),
\end{align*}
because \eqref{e:tws} holds a.e. in $(\overline{\xi}+\delta,\overline{\xi}+\eps)$. Then, we have
\begin{align}
&\int_{\overline{\xi}}^{\overline{\xi}+\eps}  \left\{\left(D(\phi)\phi' - f(\phi) + c\phi \right)\psi' - g(\phi)\psi\right\}\,d\xi\notag
\\
&=\lim_{\delta\to0^+}\int_{\overline{\xi}+\delta}^{\overline{\xi}+\eps} \left\{\left(D(\phi)\phi' - f(\phi) + c\phi \right)\psi' - g(\phi)\psi\right\}\,d\xi\notag
\\
&= -\lim_{\delta\to 0^+} D\left(\phi(\overline{\xi}+\delta)\right)\phi^{\prime }(\overline{\xi}+\delta)\psi(\overline{\xi}+\delta) + \left[f(\orho) - c\orho\right]\psi(\overline{\xi}).
\label{eq:I-10}
\end{align}
By combining \eqref{eq:I-2}, \eqref{eq:I-3} and \eqref{eq:I-10} we obtain
\begin{equation*}
\lim_{\delta\to 0^+} D\left(\phi(\overline{\xi}+\delta)\right)\phi^{\prime}
(\overline{\xi}+\delta)\psi(\overline{\xi}+\delta)=0.
\end{equation*}
Since we can choose $\psi$ such that $\psi(\overline{\xi})\ne0$, then 
$
D\left(\phi(\xi)\right)\phi'(\xi)\to0$ as $\xi\to\overline{\xi}^+$.
\end{proof}

Now we prove a sort of converse of Proposition \ref{l:sharp}, namely that condition \eqref{e:Dphi'vanishing} allows to extend profiles defined in a bounded interval to semi-wavefront profiles.

\begin{proposition}\label{l:sharp-ii}
Assume {\rm (D)} and {\rm (g)}. Let $\phi:(\alpha,\varpi)\to[0,\orho]$ be a monotone, non-constant, classical profile in $(\alpha,\varpi)$ with $\lim_{\xi\to\alpha^+}\phi(\xi) = \orho$. If moreover
\begin{equation}\label{e:D-vanishing-phi-alpha}
\lim_{\xi \to \alpha^+}D\left(\phi(\xi)\right)\phi^{\prime}(\xi)=0,
\end{equation}
then the function
\[
\tilde\phi(\xi) := \left\{
\begin{array}{ll}
\orho & \hbox{ if } \xi\in(-\infty,\alpha],
\\
\phi(\xi) & \hbox{ if } \xi\in(\alpha,\varpi),
\end{array}
\right.
\]
is  a semi-wavefront profile from $\orho$. A similar result holds for semi-wavefront solutions to $\orho$.
\end{proposition}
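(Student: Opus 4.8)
The plan is to verify that $\tilde\phi$ fulfils all the requirements of Definitions \ref{d:tws} and \ref{d:swf}. The properties not involving the equation are immediate. The function $\tilde\phi$ is continuous on $(-\infty,\varpi)$ since the two pieces agree at $\alpha$, because $\phi(\xi)\to\orho$ as $\xi\to\alpha^+$; the function $D(\tilde\phi)\tilde\phi'$ vanishes identically on $(-\infty,\alpha)$ and has limit $0$ at $\alpha^+$ by \eqref{e:D-vanishing-phi-alpha}, hence is bounded near $\alpha$ and belongs to $L^1_{\rm loc}(-\infty,\varpi)$; moreover $\phi$ must be non-increasing, for if it were non-decreasing then $\phi\equiv\orho$ (because $\phi\le\orho$ and $\phi(\alpha^+)=\orho$), against non-constancy, so $\tilde\phi$ is non-increasing and non-constant on $(-\infty,\varpi)$; finally $\tilde\phi(\xi)=\orho\to\orho$ as $\xi\to-\infty$ and $g(\orho)=0$ by {\rm (g)}, so the target value is $\ell^+=\orho$. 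Thus the only real task is to check the weak identity \eqref{e:def-tw} on $I=(-\infty,\varpi)$.

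First I would fix $\psi\in C_0^\infty(-\infty,\varpi)$ and split the integral in \eqref{e:def-tw} at $\alpha$. On $(-\infty,\alpha)$ we have $\tilde\phi\equiv\orho$, hence $\tilde\phi'\equiv0$, while $D(\orho)=0$ by {\rm (D)} and $g(\orho)=0$ by {\rm (g)}; so the integrand reduces to $(c\orho-f(\orho))\psi'$, exactly as in \eqref{eq:I-3}, and integrates to $(c\orho-f(\orho))\psi(\alpha)$ since $\psi$ vanishes near $-\infty$. For the integral over $(\alpha,\varpi)$ one cannot integrate by parts directly, since $\phi'(\alpha^+)$ may equal $-\infty$; instead I would follow the argument in the proof of Proposition \ref{l:sharp}. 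For small $\delta>0$ integrate by parts on $(\alpha+\delta,\varpi)$, using that \eqref{e:tws} holds a.e. there and that $\psi(\varpi)=0$: the $g$-terms cancel and only the boundary term at $\alpha+\delta$ survives, namely
\[
-\Big(D\big(\phi(\alpha+\delta)\big)\phi'(\alpha+\delta)-f\big(\phi(\alpha+\delta)\big)+c\phi(\alpha+\delta)\Big)\psi(\alpha+\delta),
\]
exactly as in \eqref{eq:I-10}.

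Then I would let $\delta\to0^+$. The left-hand integral converges to $\int_\alpha^\varpi\{\cdots\}\,d\xi$ because $D(\phi)\phi'$ is bounded near $\alpha$ (its limit there being $0$) and all the remaining terms are continuous and bounded; on the right-hand side $\phi(\alpha+\delta)\to\orho$, $\psi(\alpha+\delta)\to\psi(\alpha)$, and $D(\phi(\alpha+\delta))\phi'(\alpha+\delta)\to0$ by \eqref{e:D-vanishing-phi-alpha}, so the boundary term tends to $(f(\orho)-c\orho)\psi(\alpha)$. Adding this to the contribution over $(-\infty,\alpha)$ yields $0$, which is exactly \eqref{e:def-tw}. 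Hence $\tilde\phi$ is a semi-wavefront profile from $\orho$; the case of semi-wavefronts to $\orho$ follows by the reflection $\xi\mapsto-\xi$.

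The hard part is the passage to the limit at $\alpha$, and it is precisely there that hypothesis \eqref{e:D-vanishing-phi-alpha} enters in an essential way: it makes the boundary term produced by the integration by parts vanish and, simultaneously, secures the local integrability of $D(\tilde\phi)\tilde\phi'$ across $\alpha$. Everything else is a routine computation---essentially the one in Proposition \ref{l:sharp} carried out in reverse.
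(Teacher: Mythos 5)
Your proof is correct and follows essentially the same route as the paper's: split the weak formulation at $\alpha$, use $D(\orho)=g(\orho)=0$ on the constant piece, integrate by parts on $(\alpha+\delta,\varpi)$ where the profile is classical, and let $\delta\to0^+$ using \eqref{e:D-vanishing-phi-alpha} to kill the boundary term. The paper merely phrases the same computation as a proof by contradiction, and you additionally spell out the routine verifications (continuity, $L^1_{\rm loc}$ of $D(\tilde\phi)\tilde\phi'$, monotonicity and the limit at $-\infty$) that the paper leaves implicit.
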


\begin{proof} We only have to show that $\tilde\phi$ is a solution in a neighborhood of $\alpha$ in the sense of Definition \ref{e:def-tw}. By contradiction, we assume that there exist an interval $(a,b)$ with $a<\alpha<b<\varpi$ and a function $\psi\in C_0^{\infty}(a,b)$ such that
\begin{equation}\label{e:def-tw-52}
\int_a^b \left\{\left(D(\tilde\phi)\tilde\phi' - f(\tilde\phi) + c\tilde\phi \right)\psi' - g(\tilde\phi)\psi\right\}\,d\xi \ne0.
\end{equation}
Notice that $D(\tilde\phi)\tilde\phi'\in L^1_{\rm loc}(-\infty,\varpi)$ because of \eqref{e:D-vanishing-phi-alpha}. We have $\tilde\phi(\xi)\equiv \orho$ for $\xi \in (a,\alpha)$ and $\tilde\phi$ is a classical solution in $(\alpha+\delta, b)$ for every positive $\delta$ with $\alpha +\delta <b$. It follows from {\rm (g)} and \eqref{e:tws} that
\begin{align*}
\lefteqn{\int_{a}^{b}\left\{\left(D(\tilde\phi)\tilde\phi' - f(\tilde\phi) + c\tilde\phi \right)\psi' - g(\tilde\phi)\psi\right\}\,d\xi}
\\
&=\left\{\int_{a}^{\alpha} + \int_{\alpha}^{b}\right\}\left\{\left(D(\tilde\phi)\tilde\phi' - f(\tilde\phi) + c\tilde\phi \right)\psi' - g(\tilde\phi)\psi\right\}\,d\xi
\\
&= \left(c\orho - f(\orho)\right)\psi(\alpha) +\lim_{\delta\to0^+} \int_{\alpha+\delta}^{b}\left\{\left(D(\phi)\phi' - f(\phi) + c\phi \right)\psi' - g(\phi)\psi\right\}\,d\xi
\\
&= \left(c\orho - f(\orho)\right)\psi(\alpha) \notag
\\
&\quad - \lim_{\delta\to 0^+} \left(D (\phi(\alpha+\delta))\phi'(\alpha+\delta) - f(\phi(\alpha+\delta )) + c\phi(\alpha+\delta)\right)\psi(\alpha+\delta)\notag
\\
&= \left(c\orho - f(\orho)\right)\psi(\alpha) +\left(f(\orho) - c\orho\right)\psi(\alpha)=0,
\end{align*}
which contradicts \eqref{e:def-tw-52}.
\end{proof}

Now, we prove Theorem \ref{t:semi}.

\begin{proofof}{Theorem \ref{t:semi}} We begin by assuming (D) and we first consider the case of semi-wavefronts from $\orho$. The case of semi-wavefronts to $\orho$ is deduced at the end of the proof by a change of variables.

We prove that the existence of a strict semi-wavefront from $\overline \rho$ of \eqref{e:E} with speed $c$ is equivalent to the solvability of the boundary-value problem \eqref{e:fo}; our reasoning also allows to distinguish between classical and sharp profiles. Since problem \eqref{e:fo} is always solvable, see Section \ref{s:first solution}, this proves the first statement of the theorem.

We begin by assuming that $\phi$ is a strict semi-wavefront profile from $\orho$; then, the solvability of \eqref{e:fo} follows as in \cite[Theorem 2.5]{Corli-Malaguti}. Indeed, the profile $\phi$ is invertible \cite[Proposition 6.1]{Corli-Malaguti} and its inverse function $\xi=\xi(\varphi)$ is defined for $\varphi \in [0, \orho)$ \cite[Remark 6.3]{Corli-Malaguti}. The function $z(\varphi)=D(\varphi)\varphi^{\, \prime}\left(\xi(\varphi)\right)$ satisfies the equation in $\eqref{e:fo}$; moreover, $z(\phi)<0$ for $\phi\in(0,\orho)$ and $z(0^+)\le0$ \cite[Lemma 6.1 {\em (ii)}]{Corli-Malaguti}.  Finally, we have that $z(\orho^{\, -})=0$ by the property $\lim_{\xi \to \overline{\xi}}D\left(\varphi(\xi)\right)\varphi^{\prime}(\xi)=0$ by Proposition \ref{l:sharp}. Therefore $z$ satisfies problem \eqref{e:fo}.

\smallskip

Conversely, assume that problem \eqref{e:fo} is solvable. The proof of \cite[Theorem 2.5]{Corli-Malaguti} exploits the assumption $D(\orho)>0$ and then must be suitably adapted to the current situation. Let $z(\varphi)$ be a solution of \eqref{e:fo} for some $c\in \mathbb{R} $ and $\varphi(\xi)$ be the solution of the initial-value problem
\begin{equation}
\left\{
\begin{array}{l}
\varphi^{\, \prime}(\xi)=\frac{z(\varphi)}{D(\varphi)},\\
\varphi(0)=\frac{\overline \rho}{2},
\end{array}
\right.
\label{e:phirho2}
\end{equation}
in its maximal existence interval $(\alpha, \varpi)$, for $\alpha\in[-\infty,\varpi)$; this means that $\phi$ satisfies
\begin{equation*}
\lim_{\xi \to \alpha^+} \varphi(\xi)=\overline \rho, \qquad \qquad \lim_{\xi \to \varpi^-} \varphi(\xi)=0.
\end{equation*}
The proof that $\phi$ is a strict solution, i.e. $\varpi \in \R$, is analogous to that of \cite[Theorem 2.2]{Corli-Malaguti} because it only involves the values of $D$ near $\varphi=0$. Then, it remains to investigate the behavior of $\varphi(\xi)$ near $\xi=\alpha$ and, in particular, to describe the type of $\phi$ at that point.

Let $\overline \xi$ be as in \eqref{e:xi0-sharp} and notice that, by definition of $z$,
\begin{equation}\label{e:quotient}
\lim_{\xi \to \overline \xi^+} \varphi^{\, \prime}(\xi)=\lim_{\varphi \to \orho^-}\frac{z(\varphi)}{D(\varphi)}.
\end{equation}
Whether $\phi$ is sharp or classical depends on the value of the limit in the right-hand side of \eqref{e:quotient}, which in turn depends on the value of $c$ because of Proposition \ref{p:slopePDnew}; notice that $\ell=0$ in Proposition \ref{p:slopePDnew} since we are assuming (D). We discuss these cases.

\smallskip
{\em (i)} \emph{Case $c<h(\orho)$.} By Proposition \ref{p:slopePDnew} we have
$$
\lim_{\xi \to \overline \xi^+} \varphi^{\, \prime}(\xi)=\lim_{\varphi \to \orho^-} \frac{z(\varphi)}{D(\varphi)}
= \left\{
\begin{array}{ll} \frac{h(\orho)-c}{\dot D(\orho)} &\dot D(\orho)<0,
\\
-\infty & \dot D(\orho)=0.
\end{array}
\right.
$$
In both cases this implies $\overline \xi \in \R$ and then $\varphi(\overline \xi)=\orho$. Moreover, we have
\[
\lim_{\xi \to \overline \xi^+} D\left(\varphi(\xi)\right)\varphi^{\, \prime}(\xi) = \lim_{\varphi \to \orho^-} z(\varphi)=0.
\]
Then
\begin{equation}\label{e:tilde-phi}
\tilde \varphi(\xi)=\left\{
\begin{array}{ll} \orho & \text{ if } \xi \in(-\infty, \overline \xi],
\\
\varphi(\xi) & \text{ if } \xi \in  (\overline \xi, \varpi),
\end{array}
\right.
\end{equation}
is a \emph{sharp} semi-wavefront profile from $\rho$ by Proposition \ref{l:sharp-ii}.

\smallskip
{\em (ii)} \emph{Case $c\ge h(\orho)$ and $\dot D(\orho)<0$.} Arguing as in case {\em (i)} we obtain that the limit in \eqref{e:quotient} is $0$. If $\overline \xi =-\infty$, then the function $\varphi$ is a \emph{classical} semi-wavefront profile from $\orho$. If $\overline \xi \in \R$, then the function $\tilde \varphi$ defined in \eqref{e:tilde-phi} is a \emph{classical} semi-wavefront profile from $\orho$.

\smallskip
{\em (iii)} \emph{Case $c> h(\orho)$ and $\dot D(\orho)=0$.} In this case the situation is more delicate and we need to introduce an upper-solution for the equation in \eqref{e:fo}. Let $(\psi_n)\subset (0, \orho)$ be a sequence converging to $\orho$.  By Proposition \ref{p:slopePDnew} we deduce
    $$
    \frac{z(\psi_n)}{\psi_n-\orho}\to \dot z(\orho)=0
    $$
    as $n \to \infty$. By applying the Mean Value Theorem in every interval $[\psi_n, \orho]$, we obtain a new sequence $(\varphi_n)\subset (\psi_n, \orho)$, which again converges to $\orho$ and satisfies $\dot z(\varphi_n)\to 0$ as $n \to \infty$. Therefore, by \eqref{e:zeq}, we have
    $$
    \frac{D(\varphi_n)g(\varphi_n)}{z(\varphi_n)}\to h(\orho)-c
    $$
    and then
    \begin{equation}\label{e:CTA}
    \frac{z(\varphi_n)}{D(\varphi_n)}\to 0
    \end{equation}
    because $g(\orho)=0$. Fix $\varepsilon >0$ and denote $\eta(\varphi):=-\varepsilon D(\varphi)$. By (D) and (g) we have
    \[
    h(\varphi)-c -\frac{D(\varphi)g(\varphi)}{\eta(\varphi)}=h(\varphi)-c +\frac{g(\varphi)}{\varepsilon}\to h(\orho)-c< 0 \quad \hbox{ as }\quad \varphi \to \orho^-.
    \]
    Since $\dot \eta(\orho)=0$, we can find $\delta>0$ such that
    \begin{equation}\label{e:sps}
    \dot \eta(\varphi)> h(\varphi)-c -\frac{D(\varphi)g(\varphi)}{\eta(\varphi)}, \qquad \varphi \in (\orho-\delta, \orho).
    \end{equation}
    Hence, the function $\eta$ is an upper-solution for \eqref{e:zeq} by \eqref{e:sps} in the interval $(\orho-\delta, \orho)$. For any $\varphi\in (\orho-\delta, \orho)$, by \eqref{e:CTA} there exists $\varphi_N \in( \varphi,\orho)$ satisfying $z(\varphi_N) > -\varepsilon D(\varphi_N)=\eta(\varphi_N)$. By a classical comparison argument \cite[Lemma 3.2 \emph{2(ii)}]{Corli-Malaguti}{L: \cite[Lemma 4.2 \emph{2(ii)}]{Corli-Malaguti}}, we obtain that $\eta(\sigma)<z(\sigma)$ for $\sigma\in (\orho-\delta, \varphi_N]$ and then, since $\varphi\in (\orho-\delta, \orho)$ was arbitrary,
    \begin{equation}\label{e:claim0}
    z(\varphi)>\eta(\varphi), \qquad \varphi \in (\orho-\delta, \orho).
    \end{equation}
    By \eqref{e:claim0} and the definition of $\eta$ we have
$$
-\varepsilon <\frac{z(\varphi)}{D(\varphi)}<0, \qquad \varphi \in (\orho-\delta, \orho),
$$
and then
$$
\lim_{\varphi \to \orho^{\, -}}\frac{z(\varphi)}{D(\varphi)}=0.
$$
As in case {\em (ii)}, this means that $\varphi$ is a \emph{classical} semi-wavefront profile from $\orho$.

\smallskip
{\em (iv)} \emph{Case $c-h(\orho)=\dot D(\orho)=0$.} In this borderline case we show by an example that $\phi$ can be either classical or sharp. We consider the special case $h(\varphi)\equiv 0$; then the solution to problem \eqref{e:fo} with $c=0$ can be explicitly computed and is
$$
z(\varphi)=-\sqrt{2\int_{\varphi}^{\orho}D(s)g(s)\, ds}, \qquad \varphi \in [0, \orho].
$$
We further assume that
\begin{equation}\label{e:vanishingDg}
D(\varphi)=(\orho - \varphi)^{\alpha}, \quad g(\varphi)=(\orho - \varphi)^{\beta}, \quad \text{with } \alpha>1, \, \beta>0.
\end{equation}
In particular, we require $\alpha>1$ in order that $\dot D(\orho)=0$. Then we obtain that
$$
z(\varphi)=-\mu(\orho - \varphi)^{\frac{\alpha+\beta+1}{2}}, \quad \hbox{ for }\varphi\in(0, \orho) \hbox{ and } \mu:=\sqrt{\frac{2}{\alpha+\beta+1}}.
$$
We deduce
\begin{align*}
\lim_{\varphi \to \orho^{\, -}}\frac{z(\varphi)}{D(\varphi)} & = -\mu \lim_{\varphi \to \orho^{\, -}}(\orho - \varphi)^{\frac{\alpha+\beta+1}{2}-\alpha} = -\mu\lim_{\varphi \to \orho^{\, -}}(\orho - \varphi)^{\frac{-\alpha+\beta+1}{2}}
\\
&=\left\{
\begin{array}{ll}
0 & \hbox{ if }\alpha -\beta<1,
\\
-(\beta+1)^{-1/2} & \hbox{ if }\alpha-\beta=1,
\\
-\infty & \hbox{ if }\alpha-\beta>1.
\end{array}
\right.
\end{align*}
As a consequence, the corresponding semi-wavefront solution has a {\em classical} profile in the first case and a {\em sharp} profile in the remaining two cases.

\smallskip

Now, we assume (${\rm \tilde D}$). We can argue as above because of the existence result of Lemma \ref{l:Ex-sing2017}. The only difference consists in the computation of the limit in \eqref{e:quotient}. Indeed, since $\dot z(\orho)$ exists and is finite by Proposition \ref{p:slopePDnew}, it follows
\begin{equation}\label{eq:claim2}
\lim_{\xi \to \oxi ^+} \varphi^{\, \prime}(\xi)=\lim_{\varphi \to \orho^-}\frac{z(\varphi)}{D(\varphi)}=\lim_{\phi \to \orho^-}\frac{\frac{z(\phi)}{\phi-\orho}}{\frac{D(\phi)}{\phi-\orho}}=\frac{\dot z(\orho)}{-\infty}=0,
\end{equation}
where $\oxi $ is defined in \eqref{e:xi0-sharp}. Then every semi-wavefront profile is classical as in the proof above, case {\em (ii)}.

\smallskip

This concludes the proof in the case of semi-wavefronts {\em from} $\orho$. By the change of variables exploited in \cite[Theorem $2.7$]{Corli-Malaguti}, the existence of a semi-wavefront solution {\em to} $\orho$ with speed $c$ for \eqref{e:E} is equivalent to the existence of a semi-wavefront solution from $\orho$ and speed $-c$ of the equation $\rho_t-h(\rho)\rho_x=\left(D(\rho)\rho_x\right)_x+g(\rho)$, for $(x,t)\in\R\times[0,\infty)$.
\end{proofof}

\begin{remark}\label{rem:vanishing-order} The type of the profile in the critical case $c-h(\orho)=\dot D(\orho)=0$ also depends on the order of vanishing of $h(\rho)-c$ and not only on that of $D(\rho)$ and $g(\rho)$, as shown by an example in the proof above. More precisely, in addition to \eqref{e:vanishingDg}, we require
\begin{equation}\label{e:vanishingchz}
c-h(\phi)\sim(\orho-\phi)^\gamma,
\quad
z(\phi) \sim(\orho-\phi)^\delta,
\end{equation}
for some $\gamma>0$ and $\delta>1$ by Proposition \ref{p:slopePDnew}. Here $f\sim g$ means that $\lim_{\phi\to\orho}f(\phi)/g(\phi)$ is a non-zero real number. While the former expression in \eqref{e:vanishingchz} is simply an assumption, the latter should be proved; as a consequence, our analysis is merely formal. By \eqref{e:fo} we deduce $(\orho-\phi)^{\delta-1} \sim (\orho-\phi)^\gamma + (\orho-\phi)^{\alpha+\beta-\delta}$ for $\phi\to\orho-$.

If $\gamma \ge\alpha +\beta-\delta$, then $\delta-1= \alpha+\beta-\delta$. This implies $\delta = \frac12(\alpha+\beta+1)$ and in turn $\gamma\ge \frac12(\alpha+\beta-1)$. Then
\[
\frac{z(\phi)}{D(\phi)}\sim (\orho-\phi)^{\frac12(\beta-\alpha+1)}
\]
and the discussion is as in case {\em (iv)} of the proof of Theorem \ref{t:semi}.

If $\gamma < \alpha +\beta-\delta$, however, then $\delta-1= \gamma$ and $\gamma<\frac12(\alpha+\beta-1)$. Then
\[
\frac{z(\phi)}{D(\phi)}\sim (\orho-\phi)^{\gamma-\alpha+1}
\]
so that the discussion is analogous to that of the previous case but with $\gamma$ replacing $\beta$. Therefore, in this case, $\phi$ is classical if $\alpha-\gamma<1$ and sharp if $\alpha-\gamma\ge1$.
\end{remark}

We conclude this section by proving Theorem \ref{t:meno}.

\begin{proofof}{Theorem \ref{t:meno}}
Consider the equation
 \begin{equation}\label{e:Emeno}
\rho_t +  \hat f(\rho)_x=\left( \hat D(\rho)\rho_x\right)_x+ \hat g(\rho), \qquad t\ge 0, \, x\in \R,
\end{equation}
 where
 $$
 \hat f(\rho) = -f(\orho -\rho), \quad \hat D(\rho)=D(\orho -\rho), \quad\hat g(\rho) = -g(\orho -\rho), \quad \hbox{ for } \rho \in [0, \orho].
 $$
We denote $\hat h(\rho):=\hat f'(\rho)= h(\orho-\rho)$; remark that $\hat g$ satisfies (g) and $\hat D \in C^1[0, \orho]$.

We notice that $\phi$ is a classical solution of \eqref{e:tws} in $I$ with speed $c$ if and only if $\psi(\xi) = \orho-\phi(\xi)$ is a classical solution of
 \begin{equation}\label{e:twsmeno}
\left(\hat D(\psi)\psi^{\, \prime}\right)^{\, \prime} + \left(c-\hat h(\psi)\right)\psi^{\, \prime}+\hat g(\psi)=0
\end{equation}
in $I$ for the same speed $c$. As a consequence, equation \eqref{e:Emeno} has a  \emph{classical} semi-wavefront solution from $\orho$ if and only if \eqref{e:E} has a classical  semi-wavefront solution from $0$. If $D(0)=0$, i.e. if $\hat D(\orho)=0$, then \emph{sharp} semi-wavefronts from $\orho$ for \eqref{e:Emeno} appear (see Theorem \ref{t:semi}) and their profiles $\psi$ solve equation \eqref{e:twsmeno}. By Proposition \ref{l:sharp} these profiles satisfy
$$
\lim_{\xi \to \overline \xi^+}\hat D\left(\psi(\xi)\right)\psi^{\prime}(\xi)=0,
$$
where
$\overline{\xi} := \inf\left\{\xi <\varpi \, :\, \psi(\xi)<\orho\right\}=\inf\left\{\xi <\varpi \, :\, \varphi(\xi)>0\right\}$.
Since
$$
\lim_{\xi \to \overline \xi^+}D\left(\varphi(\xi)\right)\varphi^{\prime}(\xi) = -\lim_{\xi \to \overline \xi^+}\hat D\left(\psi(\xi)\right)\psi^{\prime}(\xi),
$$
then $\varphi$ is a sharp semi-wavefront profile of \eqref{e:E} from $0$ with the same $c$. The converse implication is also true. An analogous discussion is valid for semi-wavefront solutions to $0$.

At last, notice that the semi-wavefront solutions of \eqref{e:Emeno} are completely described in \cite{Corli-Malaguti} and Theorem \ref{t:semi} above. The theorem is proved.
\end{proofof}
\section{Strictly monotone solutions}\label{s:nature-1}
\setcounter{equation}{0}

In this section we prove Theorem \ref{t:strictly}.

\begin{proofof}{Theorem \ref{t:strictly}}
We only prove the result in the case of semi-wavefronts from $\overline \rho$; for semi-wavefronts to $\overline \rho$ the result is deduced as in the proof of Theorem \ref{t:semi}. We assume without any loss of generality that the estimates on $g$ in the statement hold in the whole interval $[0,\orho]$.

\smallskip
{\em (i)} We assume $g(\rho)\le L(\overline \rho-\rho)$ and $c>h(\overline\rho)$.  In fact, both here and in the following item, in the case $c<h(\orho)$ the profile is sharp and then it is non-strictly monotone.
Let $\varphi$ be a semi-wavefront in $(-\infty, \varpi)$ with speed $c$ and denote by $z(\varphi)$, $\varphi \in [0, \overline \rho]$, the solution of \eqref{e:fo} with the same wave speed $c$. For $n\in\N$ we define
\begin{equation}\label{eq:def-di-eta}
\eta_n(\phi)=D(\phi)\left[a\left(\phi-\orho\right)-\frac{1}{n}\right],
\end{equation}
where the constant $a$ is chosen to satisfy
\begin{equation}
\label{eq:def-a}
a >\frac{L}{c-h(\orho)}>0.
\end{equation}
We claim that there exist $\overline{n}>0$ and $\delta>0$, which only depends on $\overline{n}$, such that for every $n>\overline{n}$ we have that $\eta_n(z)$ is a strict upper-solution of \eqref{e:zeq} in $[\orho-\delta, \orho)$. This amounts to show that (see \cite[Definition $4.2$]{Corli-Malaguti})
\begin{equation}\label{eq:hjk1}
\dot\eta_n(\phi)- h(\phi) + c +\frac{D(\phi)g(\phi)}{\eta_n(\phi)}>0, \quad \phi \in (\orho-\delta, \orho).
\end{equation}
We consider \eqref{eq:hjk1}; by the assumption on $g$ we deduce
\begin{align}\label{eq:kl1}
\notag
&-\left(-h(\phi) + c + \frac{D(\phi)g(\phi)}{\eta_n(\phi)}\right) -\dot\eta_n(\phi)\\
&\qquad= -\left(-h(\phi) + c + \frac{g(\phi)}{a(\phi-\orho)-\frac{1}{n}}\right) +\dot D(\phi)\left[a\left(\orho -\phi\right) +\frac{1}{n}\right] - aD(\phi)\\
&\qquad < h(\phi) - c +  \frac{g(\phi)}{a(\orho -\phi)+\frac{1}{n}}+\dot D(\phi)\left[a\left(\orho -\phi\right) +\frac{1}{n}\right]\notag\\
&\qquad < h(\phi) - c + \frac{L(\orho -\phi)}{a\left(\orho - \phi + \frac{1}{an}\right)}+\dot D(\phi)\left[a\left(\orho -\phi\right) +\frac{1}{n}\right]\notag\\
&\qquad < h(\phi) - c +\frac{L}{a} + \dot D(\phi)\left[a\left(\orho -\phi\right) +\frac{1}{n}\right]\notag.
\end{align}
By \eqref{eq:def-a} we have that
\begin{equation*}
\lim_{\genfrac{}{}{0pt}{}{\phi\to\orho^-}{n\to\infty}
} \left( h(\phi) - c +\frac{L}{a} + \dot D(\phi)\left[a\left(\orho -\phi\right) +\frac{1}{n}\right]\right) =   h(\orho) - c +\frac{L}{a} <0.
\end{equation*}
Hence, for every $\eps>0$ there exist $\overline{n}>0$ and $\delta>0$ such that
\begin{equation*}
h(\phi) - c +\frac{L}{a} + \dot D(\phi)\left[a\left(\orho -\phi\right) +\frac{1}{n}\right] <-\eps^2 <0,\quad \textrm{for $n>\overline{n}$,  $\phi\in (\orho - \delta , \orho)$,}
\end{equation*}
which implies \eqref{eq:hjk1} and, then, our claim.

\smallskip
By \eqref{e:z-slope} we have
\[
\lim_{\phi\to \overline \rho^-} \frac{z(\phi)-z( \overline \rho)}{\phi- \overline \rho} =\dot z(\orho)=0.
\]
Consider a sequence $\{\psi_p\}\subset (0, \orho)$ converging to $\orho$. Thanks to \eqref{e:fo} and  the Mean Value Theorem, for every $p\in\N$ there exists $\theta_p\in(\psi_p,\orho)$  such that
\begin{equation*}
\dot{z}(\theta_p)=\frac{z(\psi_p)}{\psi_p -\orho}
\end{equation*}
and hence
\begin{equation}\label{eq:theta-n-2}
\theta_p\to \orho, \quad \dot{z}(\theta_p)\to 0.
\end{equation}
By substituting $\theta_p$ in \eqref{e:zeq}, we obtain
\begin{equation}\label{eq:z-theta-7}
\frac{D(\theta_p)g(\theta_p)}{z(\theta_p)}= h(\theta_p)-c- \dot z(\theta_p).
\end{equation}
By {\rm (g)}, the estimate on $g$, \eqref{eq:theta-n-2}, and \eqref{eq:z-theta-7} we find
\[
\lim_{p\to\infty}\frac{D(\theta_p)}{z(\theta_p)}=- \lim_{p\to\infty}\frac{c-h(\theta_p)+\dot z(\theta_p)}{g(\theta_p)}=-\infty,
\]
and then we can find a subsequence $\{\theta_{p_n}\}$ such that
\begin{equation}\label{e:Sarri}
D(\theta_{p_n})>-nz(\theta_{p_n}),
\end{equation} for $n\in \N$. We can assume  $\theta_{p_n}\in (\orho-\delta, \orho)$ for all $n>\overline{n}$; by \eqref{eq:def-di-eta} and \eqref{e:Sarri} we deduce
\[
\frac{\eta_n(\theta_{p_n})}{z(\theta_{p_n})} = \frac{D(\theta_{p_n})\left[a(\orho - \theta_{p_n}) +\frac{1}{n}\right]}{-z(\theta_{p_n})}
> n\left[a(\orho - \theta_{p_n}) +\frac{1}{n}\right] >1, \quad \mbox{ for }n \in \mathbb{N}.
\]
Hence
\begin{equation}\label{eq:eta-z}
\eta_n(\theta_{p_n}) < z(\theta_{p_n}) \quad \mbox{ for }n \in \mathbb{N}.
\end{equation}
Thanks to \eqref{eq:hjk1} and \eqref{eq:eta-z}, we can apply  \cite[Lemma $4.3$ \emph{(2.ii)}]{Corli-Malaguti} to the interval $(\orho-\delta,\theta_{p_n}]$, because $\eta(\phi)<0$, and  conclude that
\begin{equation}\label{e:eta n}
z(\phi)>\eta_n(\phi), \qquad \text{for }\phi\in (\orho-\delta, \theta_{p_n}) \text{ and } n>\overline{n}.
\end{equation}
Let $\xi(\phi)$ be the inverse function of the function $\phi$ in $[0,\orho)$, see \cite[Remark 6.3]{Corli-Malaguti}, and $\overline{\xi}$ as in \eqref{e:xi0-sharp}; then, by \eqref{e:eta n}, the definitions of $z$ and $\eta_n$ we have
\begin{align*}
\overline{\xi} - \xi\left(\orho-\delta\right) &=\lim_{n\to\infty} \int_{\orho-\delta}^{\theta_{p_n}}\xi^{\, \prime}(\varphi)\, d\varphi
 = \lim_{n\to\infty}\int_{\orho-\delta}^{\theta_{p_n}}\frac{1}{\varphi^{\, \prime}\left(\xi(\varphi)\right)}\, d\varphi
 \\
& =\lim_{n\to\infty}\int_{\orho-\delta}^{\theta_{p_n}}\frac{D(\varphi)}{z(\varphi)}\, d\varphi
< \lim_{n\to\infty}\int_{\orho-\delta}^{\theta_{p_n}}\frac{D(\varphi)}{\eta_n(\varphi)}\, d\varphi\\
&= \lim_{n\to\infty}\int_{\orho-\delta}^{\theta_{p_n}}\frac{1}{a\left(\varphi-\overline \rho\right)-\frac {1}{n}}\, d\varphi
\\
&=\frac{1}{a} \lim_{n\to\infty}\ln\left\vert \frac{a\left(\theta_{p_n} -\orho\right) -\frac{1}{n}}{-a\delta-\frac{1}{n} }\right\vert =-\infty.
\end{align*}
Hence, $\overline{\xi}=-\infty$.

\smallskip

{\em (ii)} We assume $g(\rho)\ge L(\overline \rho-\rho)^{\alpha}$ for $\alpha\in (0, 1)$ and $c \ge h(\orho)$. We fix a value $\beta \in (\frac{\alpha+1}{2}, 1)$ and define
\begin{equation}\label{eq:hmsi}
\overline h:=\min_{\varphi \in [\orho/2, \overline \rho]}\left(h(\varphi)-c\right), \quad -\sigma^2:=\min_{\varphi \in [\orho/2, \overline \rho]}\dot D(\phi), \quad M:=\max_{\varphi \in [\orho/2, \overline \rho]}D(\phi).
\end{equation}
For every $n \in \mathbb{N}$ with $\frac{\overline\rho}{2}<\overline \rho -\frac 1n$, we introduce the function $\omega_n \colon [\frac{\overline\rho}{2}, \overline \rho] \to \mathbb{R}$ defined by  \begin{equation*}
  \omega_n(\varphi) =\left\{\begin{array}{ll}-kD(\phi)(\overline \rho -\frac 1n -\varphi)^{\beta} & \hbox{ if }\varphi\in  [\frac{\overline\rho}{2},
  \overline \rho -\frac 1n],\\[1mm]
  0 & \hbox{ if } \varphi\in  (\overline \rho -\frac 1n,\orho],
  \end{array}
  \right.
  \end{equation*}
where $k$ is a positive constant, see Figure \ref{f:omegan}. By choosing a suitable $k$, we claim that
\begin{equation}\label{e:claim}
\omega_n(\varphi)\ge z(\varphi), \qquad \varphi\in  [\overline\rho/2,
  \overline \rho].
\end{equation}


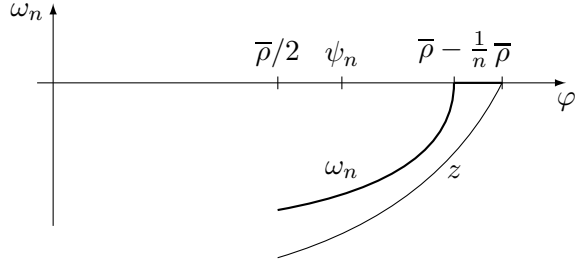
\begin{figure}[htbp]
\begin{picture}(100,100)(-80,-10)
\setlength{\unitlength}{1.2pt}

\put(50,50){
\put(0,0){\vector(1,0){160}}
\put(0,0){\line(-1,0){5}}
\put(160,-3){\makebox(0,0)[t]{$\phi$}}
\put(0,-45){\vector(0,1){70}}
\put(-3,25){\makebox(0,0)[rt]{$\omega_n$}}

\put(140,-2){\line(0,1){4}}
\put(140,5){\makebox(0,0)[b]{$\orho$}}
\put(90,-2){\line(0,1){4}}
\put(90,5){\makebox(0,0)[b]{$\psi_n$}}
\put(125,-2){\line(0,1){4}}
\put(125,5){\makebox(0,0)[b]{$\orho-\frac1n$}}
\put(70,-2){\line(0,1){4}}
\put(70,5){\makebox(0,0)[b]{$\orho/2$}}

\put(0,0){\thicklines{\qbezier(70,-40)(125,-30)(125,0)}}
\put(125,0){\thicklines{\line(1,0){15}}}
\put(90,-30){\makebox(0,0)[b]{$\omega_n$}}

\put(0,0){\qbezier(70,-55)(120,-40)(140,0)}
\put(125,-30){\makebox(0,0)[b]{$z$}}

}
\end{picture}
\caption{\label{f:omegan}{The function $z$ (thin line) and the upper-solution $\omega_n$ (thick line).}}
\end{figure}


Indeed, since $z(\varphi)<0$ for $\phi\in(0, \overline \rho)$, by a continuity argument we can find $\psi_n \in (\frac{\overline \rho}{2}, \overline \rho -\frac 1n )$ such that $\omega_n(\varphi)\ge z(\varphi)$ in $[\psi_n, \overline \rho]$. If we show that $\omega_n$ is a strict lower-solution of \eqref{e:zeq} on $[\frac{\overline \rho}{2}, \psi_n]$, then we can apply \cite[Lemma $4.3$\emph{(2.i)}]{Corli-Malaguti} in the interval $(\frac{\overline \rho}{2}, \psi_n]$, because $\omega_n(\phi) < 0$ in $[\frac{\orho}{2},\orho-\frac1n]$, and prove \eqref{e:claim}.

By the assumption on $g$ and \eqref{eq:hmsi}, we
obtain, for $\varphi \in [\frac{\overline \rho}{2}, \psi_n]$,
\begin{align}
\lefteqn{h(\varphi)-c-\frac{D(\varphi)g(\varphi)}{\omega_n(\varphi)}
=
h(\varphi)-c+\frac{D(\varphi)g(\varphi)}{kD(\phi)(\overline \rho -\frac 1n -\varphi)^{\beta}}}\nonumber
\\
\ge &\,\overline h +\frac{ L(\overline \rho -\varphi)^{\alpha}}{k(\overline \rho -\frac 1n -\varphi)^{\beta}}\nonumber
=\overline h +\frac{ L(\overline \rho -\varphi)^{\alpha}}{k(\overline \rho -\frac 1n -\varphi)^{\alpha}}\cdot \frac{1}{(\overline \rho -\frac 1n -\varphi)^{\beta-\alpha}}\nonumber
\\
\ge\,&
\overline h + \frac{L}{k}\frac{1}{(\overline \rho -\frac 1n -\varphi)^{\beta-\alpha}}.\label{e:second}
\end{align}
Now, we introduce the function $\gamma_n \colon [\frac{\overline\rho}{2}, \psi_n ] \to \mathbb{R}$ defined by
\begin{equation*}
 \gamma_n(\varphi)=\overline h + \frac{L}{k}\frac{1}{(\overline \rho -\frac 1n -\varphi)^{\beta-\alpha}}-\dot \omega_n(\varphi).
\end{equation*}
We must show that $\gamma_n(\phi)\ge0$. Since $0<\orho-\frac1n -\phi<\frac{\orho}{2}$  for $\varphi \in [\frac{\overline \rho}{2}, \psi_n]$,
it follows from \eqref{eq:hmsi} that, for $\phi\in[\frac{\orho}{2},\psi_n)$,
\begin{align}
\notag
&\quad\gamma_n(\varphi) = \overline h + \frac{L}{k}\frac{1}{(\overline \rho -\frac 1n -\varphi)^{\beta-\alpha}} + k\dot D(\phi)
 \left(\orho -\frac{1}{n} -\phi\right)^{\beta}\notag
 \\
 &\qquad-k\beta D(\phi) \left(\orho -\frac{1}{n} -\phi\right)^{\beta-1}
 \notag
 \\
& \ge \overline h + \frac{L}{k}\frac{1}{(\overline \rho -\frac 1n -\varphi)^{\beta-\alpha}} -k\sigma^2\left(\orho -\frac{1}{n} -\phi\right)^{\beta}-Mk\beta\left(\orho -\frac{1}{n} -\phi\right)^{\beta-1}\notag
\\
& >  \overline h -k\sigma^2 \left(\frac{\orho}{2}\right)^{\beta}+ \frac{1}{(\overline \rho -\frac 1n -\varphi)^{\beta-\alpha}}\left[\frac{L}{k} - Mk\beta\left(\orho -\frac{1}{n} -\phi\right)^{2\beta-(1+\alpha)}\right]\notag
\\
& \ge \overline h -k\sigma^2 \left(\frac{\orho}{2}\right)^{\beta} + \frac{1}{\left(\frac{\orho}{2}\right)^{\beta-\alpha}}\left[\frac{L}{k} - Ak\right],
\label{eq:hjkl}
\end{align}
where $A=M\beta(\orho/2)^{2\beta-(1+\alpha)}$. If $k$ is sufficiently small, then the right hand side of \eqref{eq:hjkl} is positive and then $\gamma_n(\phi)>0$. Hence, $\omega_n$ is a strict lower-solution of \eqref{e:zeq} on $(\frac{\overline\rho}{2}, \psi_n]$ and claim \eqref{e:claim} is proved.

The sequence $\{\omega_n\}_n$ is  monotone and therefore we can define
\begin{equation*}
\omega(\varphi):=\lim_{n \to \infty}\omega_n(\varphi)=-kD(\phi)(\overline \rho-\varphi)^{\beta}, \quad \varphi
\in [\overline\rho/2,
  \overline \rho].
\end{equation*}
By \eqref{e:claim} we have $\omega(\varphi)\ge z(\varphi)$ for $\varphi \in [\frac{\overline\rho}{2}, \overline \rho ]$. Let $\xi(\phi)$ be as in \emph{(i)} and $\overline{\xi}$ as in \eqref{e:xi0-sharp}. It follows from the definition of $z$ that
\begin{align*}
\overline \xi  - \xi\left(\frac{\overline \rho}{2}\right) &= \int_{\frac{\overline \rho}{2}}^{\overline \rho}\xi^{\, \prime}(\varphi)\, d\varphi
 = \int_{\frac{\overline \rho}{2}}^{\overline \rho}\frac{1}{\varphi^{\, \prime}\left(\xi(\varphi)\right)}\, d\varphi=\int_{\frac{\overline \rho}{2}}^{\overline \rho}\frac{D(\varphi)}{z(\varphi)}\, d\varphi
\\
& \ge \int_{\frac{\overline \rho}{2}}^{\overline \rho}\frac{D(\varphi)}{\omega(\varphi)}\, d\varphi=  -\frac{1}{k}\int_{\frac{\overline \rho}{2}}^{\overline \rho}\frac{1}{(\overline \rho-\varphi)^{\beta}}\, d\varphi=-\frac{1}{k(1-\beta)}\left(\frac{\overline \rho}{2} \right)^{1-\beta}.
\end{align*}
Therefore $\overline \xi \in \mathbb{R}$ and so $\varphi(\xi)\equiv \overline \rho$ for $\xi \le \overline \xi$.
\end{proofof}

\section{Convergence of semi-wavefronts to wavefronts}\label{S:convergence}
\setcounter{equation}{0}

In this section we prove Theorem \ref{t:convergence}. Again the discussion is based on the first-order problem \eqref{e:fo}; solutions to this problem are provided in Lemma \ref{l:Ex-sing2017} under (g) and in \cite[Theorem 2.2]{MMconv} under (g${}_0$). In particular, if $g=g_1$ satisfies (g${}_0$) then problem \eqref{e:fo} admits solutions with $z(0)=0$ if and only if $c\ge c_1^*$; only these solutions are interesting since they correspond to wavefront profiles. If $g=g_2$ satisfies (g) then problem \eqref{e:fo} admits solutions for every $c\in\R$ and all of them correspond to semi-wavefront profiles.

The following lemma gives a comparison result between solutions of \eqref{e:fo}. For simplicity, we do not state it in full generality but focus on what we need below.

\begin{lemma}\label{l:mon2} Assume ${\rm (\hat D)}$ or ${\rm (\tilde D)}$ and let $g_1\le g_2$ be continuous functions in $[0,\orho]$ 
with $g_1$ satisfying either {\rm (g${}_0$)} or {\rm (g)} and $g_2$ satisfying  {\rm (g)}. Let $c_1,c_2\in\R$ with $c_2\le c_1$ and $c_1^*\le c_2$ when  $g_1(0)=0$. Denote with $z_i$ the solution of problem \eqref{e:fo} with $c=c_i$ and $g=g_i$, $i=1,2$.  Then
\begin{equation}\label{e:comparison}
z_1(\phi)\ge z_2(\phi) \quad \mbox{ for every } \phi\in [0,\orho].
\end{equation}
\end{lemma}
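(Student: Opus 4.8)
The plan is to compare $z_1$ and $z_2$ as solutions of the scalar first-order equation \eqref{e:zeq}, using the monotonicity in the right-hand side with respect to the data $(c,g)$. Since $z_2$ solves $\dot z = h(\phi)-c_2 - D(\phi)g_2(\phi)/z$ and both $z_1,z_2$ are negative on $(0,\orho)$, we have, wherever $z_1(\phi)$ would equal $z_2(\phi)$ at some interior point $\phi$,
\[
\dot z_1(\phi) - \dot z_2(\phi) = (c_2 - c_1) + \frac{D(\phi)\bigl(g_2(\phi)-g_1(\phi)\bigr)}{z_1(\phi)} \le 0,
\]
because $c_2\le c_1$, $D\ge 0$, $g_2\ge g_1$, and $z_1(\phi)<0$. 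This sign condition says exactly that $z_2$ is a (weak) lower fence and $z_1$ an upper fence for each other's equation, so I would invoke the comparison lemma already used repeatedly in the paper, namely \cite[Lemma 4.3]{Corli-Malaguti}, applied on the interval $(0,\orho)$ where both functions are strictly negative. First I would pin down the ordering near the right endpoint $\orho$: both $z_1(\orho^-)=z_2(\orho^-)=0$, and one checks via Proposition \ref{p:slopePDnew} (or directly from \eqref{e:zmod}) that the slopes $\dot z_i(\orho)$ satisfy $\dot z_1(\orho)\ge \dot z_2(\orho)$ under the hypotheses $c_2\le c_1$ and $g_1\le g_2$ — so that $z_1(\phi)\ge z_2(\phi)$ in a left neighborhood of $\orho$. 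Starting from this ordering and integrating the fence inequality leftward gives \eqref{e:comparison} on all of $(0,\orho)$, and then continuity extends it to the closed interval, in particular to $\phi=0$ where $z_1(0)\ge z_2(0)$.

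The step that needs the most care is the behavior at $\phi=0$, where the equation \eqref{e:zeq} may itself be singular (this is why $z_i(0)$ is defined as a limit in problem \eqref{e:fo}), and where the two boundary conditions can differ: under (g${}_0$) one has $z_1(0)=0$ provided $c_1\ge c_1^*$ — which is the reason for the hypothesis $c_1^*\le c_2\le c_1$ — whereas under (g) the value $z_i(0)$ may be strictly negative. The comparison on the open interval $(0,\orho)$ is unaffected by this, since there both $z_i<0$ and the fence argument runs cleanly; the only thing to confirm is that passing to the limit $\phi\to 0^+$ in the already-established inequality $z_1(\phi)\ge z_2(\phi)$ preserves it, which is immediate by continuity of $z_1,z_2$ on $[0,\orho]$ (guaranteed by Lemma \ref{l:Ex-sing2017} and \cite[Theorem 2.2]{MMconv}).

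A secondary technical point is that \cite[Lemma 4.3]{Corli-Malaguti} is stated for a comparison against sub/super-solutions of a \emph{fixed} equation, whereas here $z_1$ and $z_2$ solve equations with different coefficients $(c_i,g_i)$. The clean way around this is to observe that $z_1$ is a super-solution of the equation solved by $z_2$: indeed
\[
\dot z_1(\phi) = h(\phi)-c_1 - \frac{D(\phi)g_1(\phi)}{z_1(\phi)} \ge h(\phi)-c_2 - \frac{D(\phi)g_2(\phi)}{z_1(\phi)},
\]
using $c_1\ge c_2$ and $D(g_2-g_1)/z_1\le 0$; symmetrically $z_2$ is a sub-solution of the equation solved by $z_1$. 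Either viewpoint lets the cited comparison lemma apply verbatim on $(0,\orho)$, once the ordering at the right endpoint is in place. I expect the whole argument to be short: the only real work is the endpoint analysis at $\orho$ via Proposition \ref{p:slopePDnew}, which splits into the cases $\ell=0$ (both (D) and (\~D) with $\ell=0$) and $-\infty<\ell<0$ (possible only under (\~D)); in each case the explicit formula \eqref{e:z-slope} is monotone nondecreasing in $\ell$ and nonincreasing in $c$, which is precisely what is needed.
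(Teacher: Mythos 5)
Your overall strategy (a comparison of $z_1,z_2$ as solutions of the singular ODE, using monotonicity of the right--hand side in $(c,g)$) is the right one, and your first display is correct: at a point where $z_1=z_2<0$ one has $\dot z_1-\dot z_2\le 0$, so the ordering $z_1\ge z_2$ can only propagate \emph{leftward}. The genuine gap is in how you anchor that ordering at the right endpoint. You propose to order $z_1,z_2$ in a left neighborhood of $\orho$ by comparing the slopes $\dot z_i(\orho)$ via Proposition \ref{p:slopePDnew}; but (a) under $({\rm \hat D})$ the limit $\ell$ need not exist, so that proposition does not apply and $\dot z_i(\orho)$ may not even be defined; (b) in the cases where it does apply, the two slopes very often \emph{coincide} (e.g.\ $\ell_1=\ell_2=0$ and $c_1,c_2\ge h(\orho)$ gives $\dot z_1(\orho)=\dot z_2(\orho)=0$; likewise whenever $c_1=c_2$ and $\ell_1=\ell_2$), and then first-order information at $\orho$ cannot decide which of $z_1,z_2$ is larger nearby, so your leftward propagation has nothing to start from; and (c) the signs are off: since $z_i<0$ on the left of $\orho$ and $z_i(\orho^-)=0$, the inequality $z_1\ge z_2$ near $\orho$ corresponds to $\dot z_1(\orho)\le\dot z_2(\orho)$, not $\ge$; correspondingly \eqref{e:z-slope} is non\emph{increasing} in $\ell$ (and $\ell_1\ge\ell_2$ because the weight $D/(\phi-\orho)$ is negative), and $z_1$ is a \emph{sub}-solution, not a super-solution, of the equation solved by $z_2$ (your second display should read $\le$). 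Finally, the case $c_1=c_2$ with $g_1\le g_2$ only non-strictly is not covered: the differential inequality at touching points is then non-strict, and the comparison lemma you cite is stated for strict upper/lower solutions.

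The paper sidesteps the endpoint anchoring entirely. For $c_1>c_2$ it argues by contradiction from an interior point: if $z_1(\phi_0)\le z_2(\phi_0)$ for some $\phi_0\in(0,\orho)$, the \emph{strict} inequality $\dot z_1(\phi_0)<\dot z_2(\phi_0)$ forces $z_2-z_1$ to become and remain positive to the right of $\phi_0$, which contradicts $z_1(\orho^-)=z_2(\orho^-)=0$; no knowledge of $\dot z_i(\orho)$ is needed. The borderline case $c_1=c_2$ is then recovered by taking $\hat c_n\downarrow c_1$, applying the strict case to get $\zeta_n>z_2$, and identifying $\lim_n\zeta_n$ with $z_1$ via monotone convergence and the uniqueness of the solution of \eqref{e:fo}. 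If you want to keep your leftward-propagation scheme, you would have to replace the slope comparison at $\orho$ by an argument producing points $\phi_1$ arbitrarily close to $\orho$ with $z_1(\phi_1)\ge z_2(\phi_1)$ (for instance, a Gronwall estimate on $w=z_1-z_2$ using that $Dg_1/(z_1z_2)$ is locally bounded on compact subsets of $(0,\orho)$, combined with $w(\orho^-)=0$), which is essentially a reorganization of the paper's contradiction argument.
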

\begin{proof} 
We split the proof into two cases.

\begin{enumerate}[{\em (i)}]

\item Assume $c_1>c_2$.  In this case we prove a bit more than \eqref{e:comparison}, namely we claim
\[
z_1(\phi) > z_2(\phi) \quad \mbox{ for every } \phi\in (0,\orho).
\]
If there exists $\phi_0  \in (0, \orho)$ such that $z_1(\phi_0)\le z_2(\phi_0)$, we have
\begin{equation*}
\dot z_1(\phi_0) = h(\phi_0)-c_1+\frac{D(\phi_0)g_1(\phi_0)}{-z_1(\phi_0)}<h(\phi_0)-c_2+\frac{D(\phi_0)g_2(\phi_0)}{-z_2(\phi_0)} = \dot z_2(\phi_0).
\end{equation*}
Hence $z_2(\phi) > z_1(\phi)$ for $\phi$ in a right neighborhood of $\phi_0$; by repeating the same reasoning we arrive to the contradictory conclusion that  $z_2(\orho)>z_1(\orho)=0$. This proves the claim.

\item  Assume $c_1=c_2$. Let $\{\hat c_n\}_{n\ge1}$ be a sequence decreasing to $c_1=c_2$ and denote with $\zeta_n$ the corresponding solutions of \eqref{e:fo} with $c=\hat c_n$ and $g=g_1$; we emphasize that $\zeta_n$ exists either because of Lemma \ref{l:Ex-sing2017} (if $g_1$ satisfies {\rm (g)}) or by   \cite[Theorem 2.2]{MMconv} (if $g_1$ satisfies {\rm (g${}_0$)}). By \emph{(i)} we deduce that $\zeta_n> z_2$ in $(0,\orho)$, for all $n\ge1$ and  that $\{\zeta_n\}_{n\ge1}$ is a decreasing sequence. Let
$z_1(\phi):=\displaystyle{\lim_{n \to\infty}} \zeta_n (\phi)$, for $\phi \in [0, \orho]$. By the Monotone Convergence Theorem it is easy to show that $z_1$ is a solution of $\dot z = h(\phi)-c_1-\frac{D(\phi)g_1(\phi)}{z(\phi)}$ with $z_1(\orho)=0$ and $z_1(0)=0$ if $g_1(0)=0$; by applying Lemma \ref{l:Ex-sing2017} or \cite[Theorem 2.2]{MMconv} according that $g_1$ satisfies {\rm (g)} or {\rm (g${}_0$)}, we obtain that $z_1$ is the (unique) solution of \eqref{e:fo} when $c=c_1$ and $g=g_1$. At last, the estimate follows since $\zeta_n> z_2$ in $(0,\orho)$ for all $n\ge1$ implies that $z_1\ge z_2$ there and the proof is complete.\qedhere
\end{enumerate}
\end{proof}

For $n \ge 0$ we denote with $(P_n)$ the first-order problem \eqref{e:fo} corresponding to $g=g_n$ and to $c\ge c_0^*$. The following proposition deals with the convergence of $\{z_n\}_{n\ge1}$.
\begin{proposition} \label{p:conv zn} Assume ${\rm (\hat D)}$ or ${\rm (\tilde D)}$ and let $g_0$ and $\{g_n\}_{n\ge1}$ be as in  Theorem  \ref{t:convergence}. Let $z_n$ be the solution of ($P_n$), with $ n\ge 0$, for a  given  $c\ge c_0^*$.

Then, the sequence $\{z_n\}_{n\ge1}$ is increasing, satisfies $z_n(\phi)\le z_0(\phi)$ for $\phi \in [0, \orho]$ and $z_n\to z_0$ uniformly in every interval $[a,b]\subset (0, \orho)$.
\end{proposition}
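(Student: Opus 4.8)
The plan is to work entirely with the first-order problem \eqref{e:fo}, combining Lemma~\ref{l:mon2} with a comparison argument near $\orho$. The monotonicity and the bound follow at once from Lemma~\ref{l:mon2}: applied with $g_1=g_{n+1}\le g_n=g_2$ and $c_1=c_2=c$ (the threshold condition being vacuous since $g_{n+1}(0)>0$) it gives $z_{n+1}\ge z_n$; applied with $g_1=g_0\le g_n=g_2$, $c_1=c_2=c$ and the hypothesis $c_0^*\le c$ it gives $z_n\le z_0$ on $[0,\orho]$. Hence $\{z_n\}$ is increasing and bounded above by $z_0$, so it converges pointwise on $[0,\orho]$ to some $\bar z$ with $z_1\le z_n\le\bar z\le z_0$; it remains to identify $\bar z$ with $z_0$ on $(0,\orho)$ and to upgrade the convergence to a uniform one on compact subintervals.

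I would first prove uniform convergence on an arbitrary $[a,b]\subset(0,\orho)$. There $z_0$ is continuous and strictly negative, so $m:=\max_{[a,b]}z_0<0$, while $z_1\in C[0,\orho]$ is bounded below; with $z_1\le z_n\le z_0$ this yields uniform bounds $-M\le z_n\le m<0$ on $[a,b]$. Since $g_n\to g_0$ uniformly, the $g_n$ are uniformly bounded, and then the equation $\dot z_n=h-c-D g_n/z_n$ shows that $\{\dot z_n\}$ is uniformly bounded on $[a,b]$; hence $\{z_n\}$ is equi-Lipschitz there, the pointwise limit $\bar z$ inherits that Lipschitz bound and is continuous on $(0,\orho)$, and Dini's theorem turns the monotone pointwise convergence into uniform convergence on $[a,b]$. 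Writing $z_n(\phi)=z_n(\phi_1)+\int_{\phi_1}^{\phi}\bigl(h(s)-c-D(s)g_n(s)/z_n(s)\bigr)\,ds$ and letting $n\to\infty$ — legitimate because $z_n\to\bar z$ and $g_n\to g_0$ uniformly on $[a,b]$ while $|z_n|\ge|m|>0$ there — we obtain the same identity for $\bar z$ with $g_0$ in place of $g_n$. Thus $\bar z\in C^1(0,\orho)$, $\bar z<0$ there, and $\dot{\bar z}=h-c-D g_0/\bar z$ on $(0,\orho)$.

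To identify $\bar z$ with $z_0$ I would compare the two near $\orho$. From $z_n\le\bar z\le z_0$, $z_n(\orho)=z_0(\orho)=0$ and the continuity of $z_n,z_0$ at $\orho$ one gets $(z_0-\bar z)(\phi)\to0$ as $\phi\to\orho^-$. Both $\bar z$ and $z_0$ solve $\dot z=h-c-D g_0/z$ on $(0,\orho)$, whose right-hand side is locally Lipschitz in $z$ on $\{z<0\}$; so if $\bar z(\phi_*)<z_0(\phi_*)$ for some $\phi_*\in(0,\orho)$ then $\bar z<z_0$ on all of $(0,\orho)$ and
\[
(z_0-\bar z)'=D g_0\left(\frac{1}{\bar z}-\frac{1}{z_0}\right)=D g_0\,\frac{z_0-\bar z}{\bar z\, z_0}>0,\qquad \phi\in(0,\orho),
\]
because $D>0$, $g_0>0$, $z_0-\bar z>0$ and $\bar z z_0>0$ there. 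Then $z_0-\bar z$ would be positive and strictly increasing on $(0,\orho)$, contradicting $(z_0-\bar z)(\phi)\to0$ as $\phi\to\orho^-$. Hence $\bar z=z_0$ on $(0,\orho)$, and together with the uniform convergence $z_n\to\bar z$ on each $[a,b]\subset(0,\orho)$ this gives the statement.

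I expect the bulk of the work to lie in the second step — securing the two-sided bound on $z_n$ away from $0$ on $[a,b]$, the resulting equicontinuity, and the passage to the limit in the singular equation — together with the comparison trick of the third step, which neatly sidesteps any delicate study of the limit $\bar z$ at the degenerate endpoint $\phi=0$, where $z_n(0^+)$ may be strictly negative while $z_0(0^+)=0$.
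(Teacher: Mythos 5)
Your argument is correct and follows the same overall strategy as the paper: monotonicity and the bound $z_n\le z_0$ come from Lemma \ref{l:mon2}, the pointwise limit $\bar z$ is shown to solve the limiting singular equation on $(0,\orho)$, and the identification with $z_0$ rests on the observation that the difference $z_0-\bar z$ is increasing wherever it is positive and therefore cannot return to $0$ at $\orho^-$ unless it vanishes identically. The one genuine divergence is where the contradiction is anchored: the paper examines the value $\tilde z(0)$, settling the case $\tilde z(0)=0$ by the uniqueness theorem of \cite{MMconv} and the case $\tilde z(0)<0$ by the increasing-difference argument, whereas you anchor everything at $\orho$, using the squeeze $z_1\le\bar z\le z_0$ to get $\bar z(\orho^-)=0$ and thereby avoiding both the case split and the appeal to the external uniqueness result. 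Your route is slightly more self-contained; the paper's passage to the limit in the equation is slightly shorter, since it uses dominated convergence via the bound $0<D(\phi)g_n(\phi)/(-z_n(\phi))\le D(\phi)g_1(\phi)/(-z_0(\phi))$ on $[a,b]$ instead of your equi-Lipschitz/Dini argument, but both mechanisms are sound and yield the same uniform convergence on compact subintervals of $(0,\orho)$.
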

\begin{proof}
The first two claims follow from Lemma \ref{l:mon2} and we are left to the convergence.
Define
\begin{equation*}
\tilde z(\phi):=\lim_{n\to\infty}z_n(\phi), \qquad \phi \in [0, \orho].
 \end{equation*}
By the monotonicity of $\{z_n\}_{n\ge1}$ we have that $\tilde z\le z_0$; we want to show that $\tilde z = z_0$. In every interval $[a,b] \subset (0, \orho)$ we obtain
 \begin{equation}\label{eq:dominated}
0<\frac{D(\phi)g_n(\phi)}{-z_n(\phi)}\le \frac{D(\phi)g_1(\phi)}{-z_0(\phi)} \le \frac{D(\phi)\displaystyle{\max_{\phi \in [a,b]}}g_1(\phi)}{-\displaystyle{\max_{\phi \in [a,b]}}z_0(\phi)}, \quad \phi \in (0, \orho).
 \end{equation}
This implies that the convergence
 \begin{equation*}
\frac{D(\phi)g_n(\phi)}{-z_n(\phi)} \to \frac{D(\phi)g_0(\phi)}{-\tilde z(\phi)}
 \end{equation*}
 is dominated in  $[a,b]$. So, we can use the Dominated Convergence Theorem to show that $\tilde z$ solves the same problem \eqref{e:fo}
of $z_0$. If $\tilde z(0)=0$, then $\tilde z=z_0$ by \cite[Theorem 2.2]{MMconv}. Now we prove that the remaining case $\tilde z(0)< 0$ is not possible and hence the convergence is proved. We reason as in the proof of case {\em (c)} in \cite[Theorem 2.6]{Corli-Malaguti}. More precisely, $\tilde z(0)<0=z_0(0)$ implies the existence of $\delta \in (0, \orho)$ such that $\tilde z(\phi)<z_0(\phi)$ in $(0, \delta)$ and  then
$$
\dot z_0(\varphi)-\dot {\tilde z}(\varphi) =\frac{D(\varphi)g_0(\varphi)}{-z_0(\varphi)}-\frac{D(\varphi)g_0(\varphi)}{- \tilde z(\varphi)}>0, \quad \mbox{for }\varphi \in (0, \delta).
$$
Then, it cannot happen that $\tilde z(\phi_0)=z_0(\phi_0)$ for some $\phi_0 \in [\delta, \orho)$; hence $\tilde z <z_0$ in $[0, \orho)$ and then $z_0-\tilde z$ is strictly increasing in $[0,\overline{\rho})$. This implies$$
\lim_{\varphi \to \overline \rho^{\, -}} \left( z_0(\varphi)- \tilde z(\varphi) \right) > z_0(0)-\tilde z(0)>0,
$$
in contradiction with $\tilde z (\overline \rho^{\, -})=z_0(\overline \rho^{\, -})=0$.
At last, again by \eqref{eq:dominated}, we have that the convergence is uniform in every interval $[a,b]\subset (0, \orho)$.
\end{proof}
\begin{proofof}{Theorem \ref{t:convergence}} The existence of the solutions $\phi_n$ is guaranteed 
by Theorem \ref{t:semi} when either (${\rm \tilde D}$) or (D) hold; in the remaining case, namely when
(${\rm \hat D}$) holds and $D(\orho)>0$, the existence of $\phi_n$ follows by \cite[Theorem 2.7]{Corli-Malaguti}. About the existence of $\phi_0$ we refer to \cite{MMconv}; we point out when $D(0)=\dot D(0)=0$ and/or $D(\orho)=\dot D(\orho)=0$, the existence of $\phi_0$ can be proved under further assumptions on $D$ and $g$, see \cite{MMconv}.

Assume $c \ge c_0^*$. Arguing as in the proof of Theorem \ref{t:semi}, it is easy to show that $\phi_n$, for $n \ge 0$, is the {\em classical} solution  of the initial-value problem
\begin{equation}\label{e:Cauchy}
\left\{
\begin{array}{ll}
&\phi^{\prime}=\frac{z_n(\phi)}{D(\phi)},
\\
&\phi(0)=\frac{\orho}{2},
\end{array}
\right.
\end{equation}
 on its maximal existence interval $(\sigma_n, \tau_n)$, with $-\infty \le \sigma_n<0<\tau_n\le
 +\infty$, i.e., $\phi_n(\sigma_n^+)=\orho$, $\phi_n(\tau_n^-)=0$. Let $[\alpha, \beta]\subset (\sigma_0, \tau_0)$ be a fixed
interval such that $\phi_0([\alpha, \beta])=[a,b] \subset (0,\orho)$. According to
Proposition \ref{p:conv zn}, we have that $z_n\to z_0$ uniformly in $[a,b]$. By the continuous dependence of the solution on a parameter \cite[Ch. 2, Theorem 4.1]{Coddington-Levinson}, $\phi_n$ is defined on $[\alpha, \beta]$ for a sufficiently large $n$ and $\phi_n\to\phi_0$ uniformly on $[\alpha, \beta]$.

Moreover
\[
\phi_n^{\prime}(\xi) =\frac{z_n\left(\phi_n(\xi)\right)}{D\left(\phi_n(\xi)\right)} \to \frac{z_0\left(\phi_0(\xi)\right)}{D\left(\phi_0(\xi)\right)}=\phi_0^{\prime}(\xi)
\]
uniformly on $[\alpha, \beta]$. This completes the proof.
\end{proofof}

The last result of this section concerns some comparison properties, see Figure \ref{f:phi-monotone}.


\begin{figure}[htbp]
\begin{picture}(100,100)(-80,-10)
\setlength{\unitlength}{1.5pt}

\put(90,0){
\put(0,0){\vector(1,0){100}}
\put(0,0){\line(-1,0){100}}
\put(0,60){\line(1,0){100}}
\put(0,60){\line(-1,0){100}}
\put(100,-3){\makebox(0,0)[t]{$\xi$}}
\put(0,0){\vector(0,1){80}}
\put(-3,77){\makebox(0,0)[r]{$\phi$}}
\put(-3,67){\makebox(0,0)[r]{$\overline{\rho}$}}
\put(-3,27){\makebox(0,0)[rt]{$\frac{\overline{\rho}}{2}$}}

\put(0,0){\thicklines{\qbezier(-100,42)(-20,42)(0,30)}}
\put(0,0){\thicklines{\qbezier(0,30)(20,18)(100,18)}}
\put(-103,40){\makebox(0,0)[rt]{$\phi_0$}}
\put(103,20){\makebox(0,0)[lb]{$\phi_0$}}

\put(0,0){\thicklines{\qbezier(-100,58)(-30,56)(0,30)}}
\put(0,0){\thicklines{\qbezier(0,30)(10,22)(22,0)}}
\put(-103,58){\makebox(0,0)[br]{$\phi_n$}}
\put(16,5){\makebox(0,0)[r]{$\phi_n$}}
\put(28,-3){\makebox(0,0)[tr]{$\varpi_n$}}

\put(0,0){\thicklines{\qbezier(-100,50)(-30,48)(0,30)}}
\put(0,0){\thicklines{\qbezier(0,30)(16,22)(35,0)}}
\put(-103,50){\makebox(0,0)[r]{$\phi_{n+1}$}}
\put(34,5){\makebox(0,0)[l]{$\phi_{n+1}$}}
\put(33,-3){\makebox(0,0)[tl]{$\varpi_{n+1}$}}
}

\end{picture}
\caption{\label{f:phi-monotone}{The profiles in the case $g_0(\phi)<g_n(\phi)$ and $g_n(\phi)>g_{n+1}(\phi)$} for every $\phi\in[0,\orho]$.}
\end{figure}

\begin{corollary}\label{c:last}
Under the hypotheses of Theorem \ref{t:convergence} assume $g_0(\phi)<g_n(\phi)$ for $\phi \in (0, \orho)$. Then
$$
\phi_0(\xi)<\phi_n(\xi) \ \hbox{ if }\xi<0 \quad \text{and} \quad  \phi_0(\xi)>\phi_n(\xi)\ \hbox{ if }\xi \in (0, \varpi_n).
$$
Moreover, if the sequence $\{g_n\}_{n\ge1}$ is strictly decreasing, then $\{\phi_n\}_{n \ge 1}$ is decreasing in $(-\infty, 0)$ and increasing in every interval $[0, a]$ with $a<\varpi_n$ for all $n$.
\end{corollary}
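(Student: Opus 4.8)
The plan is to push the extra hypothesis $g_0<g_n$ through the first-order problem \eqref{e:fo} and then read the conclusion off the Cauchy problem \eqref{e:Cauchy}. Denote by $z_0$ and $z_n$ the solutions of $(P_0)$ and $(P_n)$; by Proposition \ref{p:conv zn} we already know that $z_n\le z_0$ on $[0,\orho]$, with $z_n(\phi),z_0(\phi)<0$ for $\phi\in(0,\orho)$. The first step is to upgrade this to the strict inequality $z_0(\phi)>z_n(\phi)$ on $(0,\orho)$. Indeed, if $z_0(\phi_*)=z_n(\phi_*)$ at some interior point $\phi_*$, then by \eqref{e:zeq}
\[
\dot z_0(\phi_*)-\dot z_n(\phi_*)=\frac{D(\phi_*)\bigl(g_0(\phi_*)-g_n(\phi_*)\bigr)}{-z_0(\phi_*)}<0,
\]
which is impossible, since the nonnegative $C^1$ function $z_0-z_n$ would then have an interior zero at which its derivative is strictly negative.

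The second step transfers this comparison to the profiles. Wherever $0<\phi_n<\orho$ one has $\phi_n'=z_n(\phi_n)/D(\phi_n)<0$, and similarly for $\phi_0$ on $J$; hence both profiles are strictly decreasing there and admit inverse functions $\xi_0,\xi_n$, which we normalize by $\xi_0(\orho/2)=\xi_n(\orho/2)=0$ thanks to \eqref{e:phi(0)}, so that $\xi_i'(\phi)=D(\phi)/z_i(\phi)<0$. From $z_n<z_0<0$ we get $\xi_0'(\phi)<\xi_n'(\phi)<0$ for every $\phi\in(0,\orho)$; integrating from $\orho/2$ yields $\xi_0(\phi)<\xi_n(\phi)$ on $(\orho/2,\orho)$ and $\xi_0(\phi)>\xi_n(\phi)$ on $(0,\orho/2)$. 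Reading this back through the strict monotonicity of $\phi_0$: for $\xi\in(0,\varpi_n)$ set $\phi:=\phi_n(\xi)\in(0,\orho/2)$, so that $\xi=\xi_n(\phi)<\xi_0(\phi)$ and therefore $\phi_0(\xi)>\phi_0(\xi_0(\phi))=\phi=\phi_n(\xi)$; for $\xi<0$ in the interval where $\phi_n<\orho$ set $\phi:=\phi_n(\xi)\in(\orho/2,\orho)$, so that $\xi=\xi_n(\phi)>\xi_0(\phi)$ and hence $\phi_0(\xi)<\phi_n(\xi)$ (while on the remaining left half-line, if $\phi_n$ reaches $\orho$ at a finite abscissa, one still has $\phi_0\le\phi_n=\orho$). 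This proves the first assertion. The second one follows verbatim, applied to the consecutive pair $g_{n+1}<g_n$: strict monotonicity of $\{g_n\}$ gives $z_{n+1}>z_n$ on $(0,\orho)$, hence $\phi_{n+1}(\xi)<\phi_n(\xi)$ for $\xi<0$ and $\phi_{n+1}(\xi)>\phi_n(\xi)$ for $\xi\in(0,\varpi_n)\supseteq(0,a]$; the same comparison also gives $\varpi_n\le\varpi_{n+1}$, so that ``$a<\varpi_n$ for all $n$'' is the natural requirement.

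The obstacle here is not analytic but a matter of careful bookkeeping of the domains of definition: one has to verify that $\phi_0$ is genuinely strictly decreasing on all of $J$ (so that $\xi_0$ exists and the inequalities $\phi_0(\xi)\gtrless\phi_n(\xi)$ are meaningful), that the point $\xi$ at which one evaluates indeed lies in $J$, and that the possible flat parts at the level $\orho$ — which occur precisely when a profile attains $\orho$ at a finite abscissa — are handled correctly, the inequality degenerating to an equality on $(-\infty,\sigma_0]$ when $\sigma_0$ is finite. Apart from this, everything reduces to the elementary monotone-comparison computation sketched above.
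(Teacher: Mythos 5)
Your proof is correct and follows essentially the same route as the paper's: both arguments hinge on upgrading the comparison of the first-order solutions to the strict inequality $z_n<z_0$ on $(0,\orho)$ and then transferring it to the profiles via $\phi'=z(\phi)/D(\phi)$. The only (cosmetic) difference is that you integrate the inverse functions $\xi_i'(\phi)=D(\phi)/z_i(\phi)$ from the normalization level $\orho/2$, whereas the paper compares the slopes at $\xi=0$ and rules out a first crossing point by contradiction; your explicit caveat about the flat parts at level $\orho$ for $\xi<0$ concerns a case the paper's proof glosses over.
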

\begin{proof} Reasoning as in the proof of Lemma \ref{l:mon2}\emph{(i)} we obtain
\begin{equation}\label{e:zn0S}
z_n<z_0, \quad \phi \in (0, \orho),
\end{equation}
for all $n \ge1$. We deduce that $\phi_0^{\prime}(0)=D(\frac{\orho}{2})z_0(\frac{\orho}{2})>D(\frac{\orho}{2})z_n(\frac{\orho}{2})=\phi_n^{\prime}(0)$, hence $\phi_0>\phi_n$ in a right neighborhood of $0$. Fix $n$ and define
 $$
 \oxi:=\sup\left\{ \xi \in(0,\varpi_n) \, : \, \phi_0(\xi)>\phi_n(\xi)\right\}.
 $$
If $\oxi <\varpi_n$, then $\phi_0>\phi_n$ in $[0, \oxi)$ and $\phi_0(\oxi)=\phi_n(\oxi):=\overline{\phi}$. We denote by $\xi_0$ and $\xi_n$ the inverse functions of $\phi_0$ and $\phi_n$ respectively; they exist because the profiles are strictly monotone owing to the sign condition on $z$. Then, by \eqref{e:zn0S},
\begin{align*}
D(\overline \varphi)\varphi_n^{\, \prime}(\overline \xi) & = D(\overline \varphi)\varphi_n^{\, \prime}\left(\xi_n(\overline \varphi)\right) = z_n(\overline \varphi)
\\
&<z_0(\overline \varphi) = D(\overline \varphi)\varphi_0^{\, \prime}\left(\xi_0(\overline \varphi )\right)=D(\overline \varphi)\varphi_0^{\, \prime}(\overline \xi).
\end{align*}
By the sign condition on $D$ we deduce that $\varphi_0^{\, \prime}(\overline \xi)>\varphi_n^{\, \prime}(\overline \xi)$, which contradicts $\phi_0>\phi_n$ in $[0, \oxi)$.  A similar  reasoning applies for $\xi<0$. The last statement follows analogously.
\end{proof}


\section{The case when $g$ changes sign}\label{s:gposneg}
In this section, we prove Theorem \ref{t:tws}. 

\begin{proofof}{Theorem \ref{t:tws}}
We begin by considering the interval $[0, \rho_0]$. By \cite[Theorem 2.7]{Corli-Malaguti},  with $\orho$ replaced by $\rho_0$, we deduce that, for any wave speed $c\in\R$, there exist strict classical semi-wavefront solutions both from $\rho_0$ and to $\rho_0$; moreover, they are unique (up to shifts) in the class of classical or sharp solutions. Their corresponding profiles are not strictly monotone, by \eqref{eq:goodg} and \cite[Theorem $2.9$]{Corli-Malaguti}, and satisfy,  see Figure \ref{f:4phi},
\begin{equation}\label{eq:uno}
\left\{
\begin{array}{lll}
\hbox{$\phi_{1,a}$ from $\rho_0$} 
&\left\{
\begin{array}{l}
\phi_{1,a}(\xi) \equiv \rho_0, \ \xi\in(-\infty,\oxi_{1,a}], 
\\
\phi_{1,a}(\xi)\in[0,\rho_0), \ \xi\in(\oxi_{1,a},\varpi_{1,a}],
\end{array}
\right.
\\[4mm]
\hbox{$\phi_{1,b}$ to $\rho_0$} 
&
\left\{
\begin{array}{l}  
\phi_{1,b}(\xi)\in[0,\rho_0), \ \xi\in[\varpi_{1,b}, \oxi_{1,b}),
\\
\phi_{1,b}(\xi)\equiv\rho_0, \ \xi\in[\oxi_{1,b},\infty),
\end{array}
\right.\end{array}
\right.
\end{equation}
for some $\oxi_{1,a}$ and $\oxi_{1,b}$.


\begin{figure}[htbp]
\begin{picture}(100,120)(-80,-10)
\setlength{\unitlength}{1pt}

\put(0,0){
\put(30,0){
\put(0,0){\vector(1,0){100}}
\put(0,0){\line(-1,0){100}}
\put(0,50){\line(1,0){100}}
\put(0,50){\line(-1,0){100}}
\put(0,100){\line(1,0){100}}
\put(0,100){\line(-1,0){100}}
\put(100,-3){\makebox(0,0)[tr]{$\xi$}}
\put(0,0){\vector(0,1){110}}
\put(-3,110){\makebox(0,0)[tr]{$\phi$}}
\put(-3,47){\makebox(0,0)[tr]{$\rho_0$}}
\put(-3,97){\makebox(0,0)[tr]{$\orho$}}

\put(0,0){
\put(0,0){\thicklines{\qbezier(-70,50)(-20,50)(-10,0)}}
\put(-70,50){\thicklines{\line(-1,0){30}}}
\put(-100,47){\makebox(0,0)[tl]{$\phi_{1,a}$}}
\put(-22,30){\makebox(0,0)[tr]{$\phi_{1,a}$}}
\multiput(-70,0)(0,5){10}{$.$}
\put(-70,-3){\makebox(0,0)[t]{$\bar\xi_{1,a}$}}
\put(-10,-3){\makebox(0,0)[t]{$\varpi_{1,a}$}}
}

\put(0,0){
\put(0,0){\thicklines{\qbezier(70,50)(20,50)(10,0)}}
\put(70,50){\thicklines{\line(1,0){30}}}
\put(100,47){\makebox(0,0)[tr]{$\phi_{1,b}$}}
\put(27,30){\makebox(0,0)[tl]{$\phi_{1,b}$}}
\multiput(70,0)(0,5){10}{$.$}
\put(70,-3){\makebox(0,0)[t]{$\bar\xi_{1,b}$}}
\put(10,-3){\makebox(0,0)[tl]{$\varpi_{1,b}$}}
}
}

\put(240,0){
\put(0,0){\vector(1,0){100}}
\put(0,0){\line(-1,0){100}}
\put(0,50){\line(1,0){100}}
\put(0,50){\line(-1,0){100}}
\put(0,100){\line(1,0){100}}
\put(0,100){\line(-1,0){100}}
\put(100,-3){\makebox(0,0)[tr]{$\xi$}}
\put(0,0){\vector(0,1){110}}
\put(-3,110){\makebox(0,0)[tr]{$\phi$}}
\put(0,47){\makebox(0,0)[tl]{$\rho_0$}}
\put(2,97){\makebox(0,0)[tl]{$\orho$}}

\put(0,0){
\put(0,0){\thicklines{\qbezier(-70,50)(-20,50)(-10,100)}}
\put(-70,50){\thicklines{\line(-1,0){30}}}
\put(-100,47){\makebox(0,0)[tl]{$\phi_{2,a}$}}
\put(-27,80){\makebox(0,0)[tr]{$\phi_{2,a}$}}
\multiput(-70,0)(0,5){10}{$.$}
\put(-70,-3){\makebox(0,0)[t]{$\bar\xi_{2,a}$}}
\put(-10,-3){\makebox(0,0)[t]{$\varpi_{2,a}$}}
\multiput(-10,0)(0,5){20}{$.$}
\multiput(9,0)(0,5){20}{$.$}
}

\put(0,0){
\put(0,0){\thicklines{\qbezier(70,50)(20,50)(10,100)}}
\put(70,50){\thicklines{\line(1,0){30}}}
\put(100,47){\makebox(0,0)[tr]{$\phi_{2,b}$}}
\put(27,80){\makebox(0,0)[tl]{$\phi_{2,b}$}}
\multiput(70,0)(0,5){10}{$.$}
\put(70,-3){\makebox(0,0)[t]{$\bar\xi_{2,b}$}}
\put(10,-3){\makebox(0,0)[tl]{$\varpi_{2,b}$}}
}
}
}
\end{picture}
\caption{\label{f:4phi}{The profiles $\phi_{1,a}$, $\phi_{1,b}$, $\phi_{2,a}$, $\phi_{2,b}$.}}
\end{figure}

Consider now equation \eqref{e:tws} and define
\[
D_1(\rho)=D(\overline{\rho}-\rho), \quad g_1(\rho)= -g(\overline{\rho}-\rho),\quad   h_1(\rho)=h(\overline{\rho} - \rho),
\]
with $\rho\in [0,\overline{\rho} -\rho_0]$. Consider the equation 
\begin{equation}\label{eq:psi1}
\left(D_1(\psi)\psi'\right)' + \left(c-h_1(\psi)\right)\psi' +g_1(\psi)=0.
\end{equation}
Let $\psi(\xi) := \orho-\phi(\xi)$.  It is easy to see that $\phi$ satisfies \eqref{e:tws} if and only if $\psi:= \orho-\phi(\xi)$ satisfies \eqref{eq:psi1}. Notice that $\phi$ ranges in $[\rho_0,\orho]$ as long as $\psi$ ranges in $[0,\orho-\rho_0]$.  Again by \cite[Theorem 2.7]{Corli-Malaguti}, equation \eqref{eq:psi1}
has strict classical traveling-wave profile from $\orho - \rho_0$ and to $\orho - \rho_0$, for every speed $c \in \mathbb{R}$; they are also unique (up to shifts) in the class of classical or sharp traveling-wave solutions. Further, by applying condition \eqref{eq:goodg}, we obtain
\begin{equation*}
g_1(\rho)\ge L\left(\orho-\rho_0-\rho\right)^{\alpha}\quad \hbox{in a left neighborhood of $\orho-\rho_0$},
\end{equation*}
with $L$ and $\alpha$ as in \eqref{eq:goodg}. Hence, by \cite[Theorem 2.9]{Corli-Malaguti} the corresponding profiles are such that
\begin{equation}\label{eq:psi}
\left\{
\begin{array}{ll}
\hbox{$\psi_{a}$ from $\orho-\rho_0$} 
&\left\{
\begin{array}{l}
\psi_{a}(\xi) \equiv \orho-\rho_0, \ \xi\in(-\infty,\oxi_a],
\\
\psi_{a}(\xi)\in[0,\orho-\rho_0), \ \xi\in(\oxi_{a},\varpi_a],
\end{array}
\right.
\\[4mm]
\hbox{$\psi_{b}$ to $\orho-\rho_0$}
&\left\{
\begin{array}{l}
\psi_{b}(\xi)\in[0,\orho-\rho_0), \ \xi\in[\varpi_b, \oxi_b),
\\
\psi_{b}(\xi)\equiv\orho-\rho_0, \ \xi\in[\oxi_{b},\infty),
\end{array}
\right.
\end{array}
\right.
\end{equation}
for some $\oxi_{a}$ and $\oxi_{b}$.
Hence we obtain the additional semi-wavefronts profiles for \eqref{e:E}
\begin{equation}\label{eq:due}
\left\{
\begin{array}{ll}
\hbox{$\phi_{2,a}$ from $\rho_0$} 
&
\left\{
\begin{array}{l}
\phi_{2,a}(\xi) \equiv \rho_0, \ \xi\in(-\infty,\oxi_{2,a}],
\\
\phi_{2,a}(\xi)\in(\rho_0,\orho], \ \xi\in(\oxi_{2,a},\varpi_{2,a}],
\end{array}
\right.
\\
\hbox{$\phi_{2,b}$ to $\rho_0$}
&\left\{
\begin{array}{l}
\phi_{2,b}(\xi)\in(\rho_0,\orho], \ \xi\in[\varpi_{2,b}, \oxi_{2,b}), 
\\
\phi_{2,b}(\xi)\equiv\rho_0, \ \xi\in[\xi_{2,b},\infty).
\end{array}
\right.
\end{array}
\right.
\end{equation}
So we conclude the proof by pasting as follows the various semi-wavefronts. 
\begin{enumerate}[{\em (1)}]

 \item We consider the profiles $\phi_{1,b}$ and $\phi_{2,a}$ and we shift them in such a way that $\oxi_{1,b}=\oxi_{2,a}$; this is always possible. Then $\phi_1 \colon [\varpi_{1,b}, \varpi_{2,a}] \to [0, \orho]$ defined by
     \[
 \phi_1(\xi)=\left\{ \begin{array}{ll}\phi_{1,b}(\xi) & \text{if }\varpi_{1,b}\le \xi\le \oxi_{1,b}=\oxi_{2,a},\\
 \phi_{2,a}(\xi) & \text{if }\oxi_{2,a}\le \xi\le \varpi_{2,a}, \end{array}\right.
 \]
is a profile of a traveling-wave solutions which satisfies the required condition.

 \item We consider the profiles $\phi_{2,b}$ and $\phi_{1,a}$ and we shift them in such a way that $\oxi_{2,b}=\oxi_{1,a}$; again this is possible. Then $\phi_2 \colon [\varpi_{2,b}, \varpi_{1,a}] \to [0, \orho]$ is defined by
     \[
 \phi_2(\xi)=\left\{ \begin{array}{ll}\phi_{2,b}(\xi) & \text{if }\varpi_{2,b}\le \xi\le \oxi_{2,b}=\oxi_{1,a},\\
 \phi_{1,a}(\xi) & \text{if }\oxi_{1,a}\le \xi\le \varpi_{1,a}. \end{array}\right.
 \]

 \item The profile $\phi_3$, of this traveling-wave solution is obtained by pasting $\phi_{1,a}$ and $\phi_{1,b}$ in the case when $\oxi_{1,a}=\oxi_{1,b}$. In particular $\phi_3 \colon [\varpi_{1,b}, \varpi_{1,a}] \to [0, \rho_0]$ is defined by
     \[
 \phi_3(\xi)=\left\{ \begin{array}{ll}\phi_{1,b}(\xi) & \text{if }\varpi_{1,b}\le \xi\le \oxi_{1,b}=\oxi_{1,a},\\
 \phi_{1,a}(\xi) & \text{if }\oxi_{1,a}\le \xi\le \varpi_{1,a}. \end{array}\right..
 \]

 \item The profile $\phi_4$ of this traveling-wave solution is obtained by pasting $\phi_{2,a}$ and $\phi_{2,b}$ in the case when $\oxi_{2,a}=\oxi_{2,b}$. In particular $\phi_4 \colon [\varpi_{2,b}, \varpi_{2,a}] \to [\rho_0, \orho]$ is defined by
     \[
 \phi_4(\xi)=\left\{ \begin{array}{ll}\phi_{2,b}(\xi) & \text{if }\varpi_{2,b}\le \xi\le \oxi_{2,b}=\oxi_{2,a},\\
 \phi_{2,a}(\xi) & \text{if }\oxi_{2,a}\le \xi\le \varpi_{2,a}. \end{array}\right.
 \]
 \end{enumerate}
The theorem is completely proved. 
\end{proofof}


\begin{figure}[htbp]
\begin{picture}(100,120)(-80,-10)
\setlength{\unitlength}{1pt}

\put(-20,0){
\put(0,0){\vector(1,0){350}}
\put(0,0){\line(-1,0){40}}
\put(0,60){\line(1,0){350}}
\put(0,120){\line(1,0){350}}
\put(0,120){\line(-1,0){40}}
\put(0,60){\line(-1,0){40}}
\put(350,8){\makebox(0,0){$\xi$}}
\put(130,-10){\vector(0,1){160}}
\put(137,147){\makebox(0,0){$\phi$}}
\put(137,67){\makebox(0,0){$\rho_0$}}
\put(137,127){\makebox(0,0){$\orho$}}

\put(-40,0){
\put(0,0){\thicklines{\qbezier(80,120)(100,60)(150,60)}}
\put(150,60){\thicklines{\line(1,0){80}}}
\put(115,92){\makebox(0,0)[br]{$\phi_{2,b}$}}
\put(200,63){\makebox(0,0)[b]{$\phi_{2,b}$}}
\put(150,-2){\makebox(0,0)[t]{$\oxi_{2,b}$}}
\multiput(149,0)(0,5){12}{$.$}
\multiput(79,0)(0,5){24}{$.$}
\put(80,-2){\makebox(0,0)[t]{$\varpi_{2,b}$}}
}

\put(110,0){
\put(0,0){\thicklines{\qbezier(80,60)(130,60)(170,0)}}
\put(160,40){\makebox(0,0)[br]{$\phi_{1,a}$}}
\multiput(79,0)(0,5){12}{$.$}
\put(80,-2){\makebox(0,0)[t]{$\oxi_{1,a}$}}
\put(170,-2){\makebox(0,0)[t]{$\varpi_{1,a}$}}
}
}

\end{picture}
\caption{\label{f:join}{The profile $\tilde\phi_1$ in \eqref{e:phii}.}}
\end{figure}

\begin{remark} The wave profiles $\phi_1$ and $\phi_2$ in Theorem \ref{t:tws} are strictly monotone because they assume the value $\rho_0$ exactly in one point. We can also construct profiles where the value $\rho_0$ is reached in an interval, hence losing the strict monotonicity. For instance, when $\oxi_{2,b}<\oxi_{1,a}$, the function $\tilde \phi_2 \colon [\varpi_{2,b}, \varpi_{1,a}] \to [0, \orho]$ defined by
\begin{equation}\label{e:phii} 
\tilde\phi_2(\xi)=\left\{ \begin{array}{ll}\phi_{2,b}(\xi) & \text{if }\varpi_{2,b}\le \xi\le \oxi_{2,b},
\\
 \rho_0 & \text{if }\oxi_{2,b}\le \xi\le \oxi_{1,a},
 \\
 \phi_{1,a}(\xi) & \text{if }\oxi_{1,a}\le \xi\le \varpi_{1,a}, \end{array}\right.
\end{equation} 
is the profile of a classical strict traveling wave for \eqref{e:E} that is not strictly monotone.
\end{remark}

We recall that an analogous \lq\lq pasting\rq\rq\ between profiles has been shown in \cite[\S 8]{Corli-Malaguti} to fail. That failure was due to the fact that $D(\phi)\phi'$ was discontinuous at the point $\oxi$ of pasting;  in turn, this was a consequence of being $\phi'(\oxi\pm)\in\R$ while $|\phi(\oxi\mp)|=\infty$. In the present case, on the contrary, we always have $\phi'(\oxi\pm)=0$, because the profiles are classical; as a consequence, $D(\phi)\phi'$ vanishes at $\oxi$.




\appendix
\section*{Acknowledgements}
A. Corli was supported by the GNAMPA 2015 project {\em Balance Laws in the Modeling
of Physical, Biological and Industrial Processes}; he also acknowledges support by the PRIN 2012 project
{\em Nonlinear Hyperbolic Partial Differential Equations, Dispersive and Transport Equations:
Theoretical and Applicative Aspects}. The authors were also supported by the Project {\em Macroscopic models of traffic flows: qualitative analysis and implementation} by the University of Modena and Reggio Emilia and are members of GNAMPA.
{\small
\bibliography{refe_sharp}
\bibliographystyle{abbrv}
}

\end{document}